\g@addto@macro\@verbatim{\microtypesetup{activate=false}}\makeatother%
\definecolor{mygreen}{rgb}{0,0.6,0}
\definecolor{mygray}{rgb}{0.5,0.5,0.5}
\definecolor{mymauve}{rgb}{0.58,0,0.82}
\definecolor{backcolour}{rgb}{0.95,0.95,0.92}
\tiny\color{mygray}, %
\numberwithin{equation}{section}
\theoremstyle{plain}
\newtheorem{thm}{Theorem}[section]
\newtheorem{lm}[thm]{Lemma}
\newtheorem{prop}[thm]{Proposition}
\newtheorem{cor}[thm]{Corollary}
\newtheorem{conj}[thm]{Conjecture}
\theoremstyle{definition}
\newtheorem{alg}[thm]{Algorithm}
\newtheorem{assu}[thm]{Assumption}
\newenvironment{df}
{\pushQED{\qed}\dfx}
{\popQED\enddfx}
\newtheorem{rem}[thm]{Remark}
\newtheorem{exa}[thm]{Example}
\def\={\;=\;}  \def\+{\,+\,}
\def\be{\begin{equation}}   \def\ee{\end{equation}}     \def\bes{\begin{equation*}}    \def\ees{\end{equation*}}
\def\ba{\be\begin{aligned}} \def\ea{\end{aligned}\ee}   \def\bas{\bes\begin{aligned}}  \def\eas{\end{aligned}\ees}
\DeclareTextFontCommand{\emph}{\bfseries\em}
\DeclareMathOperator{\ord}{ord}
\DeclareMathOperator{\Real}{Re}
\DeclareMathOperator{\res}{res}
\DeclareMathOperator{\ind}{ind}
\DeclareMathOperator{\divisor}{div}
\DeclareMathOperator{\CH}{CH}
\DeclareMathOperator{\RH}{RH}
\DeclareMathOperator{\DR}{DR}
\DeclareMathOperator{\LG}{LG}
\DeclareMathOperator{\Aut}{Aut}
\DeclareMathOperator{\spin}{spin}
\DeclareMathOperator{\Cont}{Cont}
\DeclareMathOperator{\Mod}{Mod}
\DeclareMathOperator{\St}{St}
\DeclareMathOperator{\SL}{SL}
\newcommand{\norm}[1]{\left\lVert#1\right\rVert}
\newcommand{\pmoxrmoduli}{\mathbb{P}\Xi\overline{\mathcal{M}} ^\mathfrak{R}_{\boldsymbol{g},\boldsymbol{n}}(\boldsymbol{\mu})}
\title[Computing spin stratum classes]{An algorithm to compute the fundamental classes of spin components of strata of differentials}
\author{Yiu Man Wong }
\address{Institut f\"ur Mathematik, Goethe-Universit\"at Frankfurt,
Robert-Mayer-Str. 6-8,
60325 Frankfurt am Main, Germany}
\email{wong@math.uni-frankfurt.de}
\thanks{Research of the author is supported
by the DFG-project MO 1884/2-1 and the Collaborative Research Centre
TRR 326 ``Geometry and Arithmetic of Uniformized Structures''.}
\begin{document}

\maketitle
\begin{abstract}
    We construct an algorithm for computing the cycle classes of the spin components of a stratum of differentials in the moduli space of stable curves $\overline{\mathcal{M}}_{g,n}$. In addition, we implement it within the \texttt{Sage} package \texttt{admcycles}. Our main strategy is to reconstruct these cycles by their restrictions to the boundary of $\overline{\mathcal{M}}_{g,n}$ via clutching maps. These restrictions can be computed recursively by smaller dimensional spin classes and determine the original class via a certain system of linear equations. To study the spin parities on the boundary of a stratum of differentials of even type, we make use of the moduli space of multi-scale differentials introduced in \cite{bainbridge2019moduli}.
    
    As an application of our algorithm, one can verify a conjecture on spin double ramification cycles stated in \cite{costantini2021integrals} in many examples, by using the results computed by our algorithm.
\end{abstract}

\tableofcontents
\section{\text{Introduction}}
\label{sec:Intro}

Let $\mathbb{P}\Omega\mathcal{M}_{g,n}(\mu)$ be a (projectivized) moduli space of differentials with labelled singularities of orders prescribed by an integer partition $\mu=(m_1,...,m_n)\in \mathbb{Z}^n$ of $2g-2$. For brevity, we will call the space $\mathbb{P}\Omega\mathcal{M}_{g,n}(\mu)$ a \emph{stratum of differentials} and the partition $\mu$ a \emph{signature}. The space $\mathbb{P}\Omega\mathcal{M}_{g,n}(\mu)$ parametrizes pairs $\big( (C,p_1,...,p_n),[\omega]\big)$, where $(C,p_1,...,p_n)$ is a genus $g$ Riemann surface with $n$ marked points while $[\omega]$ is the scaling equivalence class of a  differential $\omega$ whose divisor is $\sum_{i=1}^n m_ip_i$. A (projectivized) stratum $\mathbb{P}\Omega\mathcal{M}_{g,n}(\mu)$ can be embedded into $\mathcal{M}_{g,n}$ and we denote the image and its closure in $\overline{\mathcal{M}}_{g,n}$ by~$\mathcal{H}_g(\mu)$ resp. $\overline{\mathcal{H}}_g(\mu)$. The space $\overline{\mathcal{H}}_g(\mu)$ is called the Deligne-Mumford compactification of a (projectivized) stratum of differentials. Since $\overline{\mathcal{H}}_g(\mu)$ is a closed substack of $\overline{\mathcal{M}}_{g,n}$, it induces a cycle class $[\overline{\mathcal{H}}_g(\mu)]\in \mathrm{H}^*(\overline{\mathcal{M}}_{g,n},\mathbb{Q})$. The definitions of strata of differentials and the Deligne-Mumford compactification can be extended to $k$-differentials. For a given signature $\mu=(m_1,...,m_n)$, where $\sum_i m_i=k(2g-2)$, we denote the embedding of a (projectiviezed) stratum of $k$-differentials of type $\mu$ into $\overline{\mathcal{M}}_{g,n}$ and its closure in $\overline{\mathcal{M}}_{g,n}$ by $\mathcal{H}_g^k(\mu)$ resp. $\overline{\mathcal{H}}_g^k(\mu)$. The computation of the cycle classes $[\overline{\mathcal{H}}_g^k(\mu)]$ in $\CH^*(\overline{\mathcal{M}}_{g,n},\mathbb{Q})$ or $\mathrm{H}^*(\overline{\mathcal{M}},\mathbb{Q})$ is a problem that has been recently solved (cf. \cite{sauvaget2019cohomology}, \cite{schmitt2016dimension}, \cite{holmes2021infinitesimal}, \cite{bae2020pixton}).

The complete classification of connected components of a stratum of $1$-differentials can be found in \cite{kontsevich2003connected} and \cite{boissy2015connected}. On the other hand, the problem of classifying the connected components for strata of $k$-differentials is still not completely solved (cf. \cite{chen2021towards}). An important invariant of a stratum of $k$-differentials is the \emph{spin parity} which will be recalled in Section~\ref{sec:spin}. In general, if $k$ is odd and $\mu$ is of \emph{even type}, i.e. the integers $m_1,..,m_n$ are all even, then the connected components of a stratum of $k$-differentials of type $\mu$ can be partitioned into two non-empty sets according to spin parities. Throughout this paper, if there is an object $O$ that admits spin parities, then the spin components of it will be denoted by $O^+$ resp. $O^-$.  By taking into account the spin components of a stratum of differentials, one can split the computation of algebro-geometric invariants of a stratum into two components according to the spin parity. For instance, the formulae of Masur-Veech volumes of the spin components of a stratum have been derived in \cite{chen2020masur}; the formulae of the integral of $\psi$-classes on the Deligne-Mumford compactification of spin components of a stratum of $k$-differentials are proved in \cite{costantini2021integrals}. Despite these pieces of information about the spin components, the problem of computing the cycle classes $[\overline{\mathcal{H}}_g^k(\mu)^+]$ resp. $[\overline{\mathcal{H}}_g^k(\mu)^-]$ remains open. These classes are important because they allow us to do many computations in the intersection theory of the spin components by reducing to intersections on $\overline{\mathcal{M}}_{g,n}$.

\par For $k=1$, the stratum $\overline{\mathcal{H}}_{g}(\mu)$ has pure complex dimension $g$ if $\mu$ has any negative entry (so that $\overline{\mathcal{H}}_{g}(\mu)$ parameterizes meromorphic differentials) and of $g-1$ otherwise (for the space parameterizing holomorphic differentials). In this text, we construct an algorithm to split the fundamental class of a stratum of $1$-differentials of even type $[\overline{\mathcal{H}}_{g}(\mu)]\in H^{j}(\overline{\mathcal{M}}_{g,n};\mathbb{Q})$, where we let $j=2g$ resp. $j=2g-2$ if we are working on strata of meromorphic resp. holomorphic differentials, into the spin components $[ \overline{\mathcal{H}}_{g}(\mu)^{-}]$ and $[ \overline{\mathcal{H}}_{g}(\mu)^{+}]$. For short, we will call the fundamental class of a stratum of differentials $[\overline{\mathcal{H}}_g(\mu)]$ a \emph{stratum class}. To do the recursive computation, it will be more convenient if we instead compute the \emph{spin stratum class}, which is defined as: \begin{align*}
    [\overline{\mathcal{H}}_{g}(\mu)]^{\spin}:=[\overline{\mathcal{H}}_{g}(\mu)^{+}]-[\overline{\mathcal{H}}_{g}(\mu)^{-}].
\end{align*} 
Note that since there already exist recursive algorithms to compute the stratum classes $[\overline{\mathcal{H}}_{g}(\mu)]$, the spin stratum class $[\overline{\mathcal{H}}_{g}(\mu)]^{\spin}$ will determine $[\overline{\mathcal{H}}_{g}(\mu)^{-}]$ and $[\overline{\mathcal{H}}_{g}(\mu)^{+}]$. 

\begin{rem}
More generally, any (algebraic/cohomological) class on a stratum of $k$-differentials which admits a spin structure can be split into a sum of two summands corresponding to the even and odd spin parities respectively. Thus one can also define a linear map:
\begin{align*}
    \bullet^{\spin}: \mathrm{H}^*(&\mathbb{P}\Xi\overline{\mathcal{M}}_{g,n}(\mu))\longrightarrow \mathrm{H}^*(\mathbb{P}\Xi\overline{\mathcal{M}}_{g,n}(\mu))\\ & w=w^++w^-\mapsto w^+-w^-
\end{align*}
In our text, given a class $w$, we denote its spin variant by $w^{\spin}$. For instance, the spin stratum class of $k$-differentials will be denoted by $[\overline{\mathcal{H}}_g^k(\mu)]^{\spin}$; the tautological classes $\xi^{\spin}$ and $\psi^{\spin}$ on strata of differentials will appear in Section~\ref{sec:resolve_res}.
\end{rem}

\subsection{The outline of our strategy} 
Given a dual graph $\Gamma$ of some stable curve of genus $g$ and $n$ marked points, there is a natural clutching morphism 
\begin{align*}
    \xi_\Gamma:\overline{\mathcal{M}}_\Gamma:=\prod_{v\in V(\Gamma)}\overline{\mathcal{M}}_{g_v,n_v}\longrightarrow \overline{\mathcal{M}}_{g,n}.
\end{align*}
The strategy of our recursive algorithm is to compute $[ \overline{\mathcal{H}}_{g}(\mu)]^{\spin}$ by solving a system of linear equations from clutching pullbacks, which reduce the problem to computing the spin stratum classes $[\overline{\mathcal{H}}_{g'}(\mu')]^{\spin}$ of smaller genera or fewer singularities. By proving that $\mathrm{H}^{2g-2}(\overline{\mathcal{M}}_{g,n};\mathbb{Q})=0$ in Theorem~\ref{thm:vanish}, we can strengthen a result of \cite{arbarello1998calculating} (Proposition~\ref{prop:arbarello}), which will guarantee the injectivity of the direct sum of clutching pullbacks with respect to one-edge stable graphs $\Gamma$
\begin{align}\label{eq:clut}
       \oplus_\Gamma\xi_\Gamma^*:\mathrm{H}^{j}(\overline{\mathcal{M}}_{g,n};\mathbb{Q})\longrightarrow \bigoplus_\Gamma \mathrm{H}^{j}(\overline{\mathcal{M}}_\Gamma;\mathbb{Q}), 
\end{align}
for $j$ within some appropriate range. By such a result, one can actually already construct a theoretical algorithm to compute the spin stratum classes. However, in practice, the cohomology of moduli spaces of stable curves $\overline{\mathcal{M}}_{g,n}$, for all $g,n$, is not entirely known. Thus, it is not possible to directly implement the strategy we mentioned above. It will only be feasible if we restrict our computation to the tautological ring $\RH^*(\overline{\mathcal{M}}_{g,n})$, on which we have a better understanding of the generators and relations. The drawback is that if a spin stratum class is not tautological, then our algorithm implemented in \texttt{admcycles} will raise errors. Thus, the first assumption we need to make for our implementation of the algorithm is that the spin stratum classes are tautological.

A bottleneck for programming the aforementioned strategy is that the clutching pullback with respect to the self-loop graph will not reduce the computation complexity (see Section~\ref{sec:non_comp}). However, we observed that, in many cases where the number of marked points $n$ is larger equal to $3$, the injectivity of \eqref{eq:clut} holds when we restrict the clutching pullbacks to the tautological ring $\RH^*(\overline{\mathcal{M}}_{g,n})$ even without including the non-compact type graph. This means that in the recursive computation of a spin stratum class, we only need to perform the clutching pullback with respect to the self-loop graph at most once. This ensures that the runtime of our algorithm will not be unreasonably long.

However, we are not able to prove that the phenomenon above is always true in this paper. Hence, we rephrase this into an assumption of \emph{the sufficiency of compact type clutching} (Assumption~\ref{prop:assum}), by which we mean that the direct sum of clutching pullbacks, restricting to $\RH^*(\overline{\mathcal{M}}_{g,n})$, with respect to one-edge graphs of compact type is injective for $g\geq 1, n\geq 3$ and for codimension $j$ which is not too large.

 Our implemented algorithm for computing spin stratum classes in this paper will be based on two assumptions: First, the spin stratum classes are tautological; second, the assumption of the sufficiency of compact type clutching. For particular $g,n$ and $j$, it is known that the tautological classes generate the cohomology groups:
 
 \begin{itemize}
     \item $g=0$ and for any $n$ and $j$ (cf. \cite{keel1992intersection});
     \item $g=1$ and $n<11$, for any $j$ (cf. \cite{graber2003constructions});
     \item $g=2$ and $n<20$, and for any even number $j$ (cf. \cite{petersen2016tautological});
     \item $g=3$ and $n=0,1,...,8$ (cf. \cite{bergstrom2008cohomology}, \cite{getzler1999hodge}, \cite{getzler1998topological}, \cite{canning2022chow});
     \item $g=4$ and $n=0,1,...,6$ (cf. \cite{canning2022chow});
     \item and more cf. \cite{canning2022chow}.
 \end{itemize}
 Thus, in the cases listed above, the first assumption is automatically true. On the other hand, the second assumption has been checked by the \texttt{Sage} package \texttt{admcycles} on the tautological ring and it is true for $(g,n)$ equal to
\begin{align}\label{eq:range1}
    (1,3),(1,4),(1,5),(1,6),(2,3),(2,4),(2,5),(3,3),(3,4),(4,3).
\end{align}
To summarise, the assumptions we made are automatically true for the values of $g,n$ listed above. We have used the implemented algorithm to compute the spin stratum classes for plenty of signatures where $(g,n)$ is equal to
\begin{align}\label{eq:range2}
    (2,1),(2,2),(2,3),(2,4),(3,1),(3,2),(3,3),(4,1),(4,2),(4,3).
\end{align}
The results by the implemented algorithm for $(g,n)$ in the list above are guaranteed to be correct.

Lastly, we explain how we compute each individual clutching pullback. Indeed, we apply the Clutching Pullback Formula \eqref{eq:excess_p} of $[\overline{\mathcal{H}}_g(\mu)]^{\spin}$. The formula is derived by using the spin structure on the moduli space of multi-scale differentials $\mathbb{P}\Xi\overline{\mathcal{M}}_{g,n}(\mu)$ (the definitions will be recalled in Section~\ref{sec:multi-scale}), which defines a compactification of the stratum of differentials $\mathbb{P}\Omega\mathcal{M}_{g,n}(\mu)$. The compactification $\mathbb{P}\Omega\mathcal{M}_{g,n}(\mu)\subset\mathbb{P}\Xi\overline{\mathcal{M}}_{g,n}(\mu)$ has normal crossing boundary divisors and a proper morphism $p:\mathbb{P}\Xi\overline{\mathcal{M}}_{g,n}(\mu)\longrightarrow\overline{\mathcal{M}}_{g,n}$ by sending a multi-scale differential to its underlying stable curve. The advantage to working with the compactification $\mathbb{P}\Xi\overline{\mathcal{M}}_{g,n}(\mu)$ is that the set of connected components of $\mathbb{P}\Xi\overline{\mathcal{M}}_{g,n}(\mu)$ is in bijection with the set of connected components of $\mathbb{P}\Omega\mathcal{M}_{g,n}(\mu)$, whereas in the Deligne-Mumford compactification the spin components can have intersections on the boundary. This implies that the closures of different spin components are still disjoint, thus the spin parity on a multi-scale differential of even type can be defined as the spin parity of the differentials lying in the neighbourhood. As a result, $\mathbb{P}\Xi\overline{\mathcal{M}}_{g,n}(\mu)$ has spin components and 
\begin{align*}
    p_*\big([\mathbb{P}\Xi\overline{\mathcal{M}}_{g,n}(\mu)^+]\big)&= [ \overline{\mathcal{H}}_{g}(\mu)^+]\\
    p_*\big([\mathbb{P}\Xi\overline{\mathcal{M}}_{g,n}(\mu)^-]\big)&= [ \overline{\mathcal{H}}_{g}(\mu)^-]\\
    p_*\big([\mathbb{P}\Xi\overline{\mathcal{M}}_{g,n}(\mu)]^{\spin}\big)&=[ \overline{\mathcal{H}}_{g}(\mu)]^{\spin}.
\end{align*}

Note that deriving the Clutching Pullback Formula~\eqref{eq:excess_p} requires an understanding of how the spin parity behaves on the boundary strata of $\mathbb{P}\Xi\overline{\mathcal{M}}_{g,n}(\mu)$. The spin parity of a multi-scale differential lying on the boundary can be computed in the following way:
\begin{itemize}
    \item First, we undegenerate the multi-scale differential to a usual differential which can be realized by a flat surface
    \item Then, we compute the spin parity by the explicit Formula~\eqref{eq:parity} for the Arf invariant.
\end{itemize}

\subsection{Main results}

\par Our first observation on the spin parity of a multi-scale differential is the following proposition. We briefly define the objects involved in the following proposition (readers are referred to Section~\ref{sec:Background} for the details). 
\begin{itemize}
    \item An \emph{enhanced level graph} is a graph whose edges are weighted by positive integers (which we call enhancements) and there is a full order on the set of vertices (which can be realized by placing the vertices on different levels).
    \item A \emph{twisted differential} of type $\mu$ on a stable curve $(X;z_1,...,z_n)$ is a collection of non-zero meromorphic differentials on the irreducible components of $X$ such that their order of poles and zeros at $z_1,...,z_n$ are prescribed by $\mu$. A twisted differential is \emph{compatible} with an enhanced level graph $\Delta$ if $\Delta$ is the dual graph of $(X;z_1,...,z_n)$, and the orders of poles and zeros at the nodal points are prescribed by the enhancements (altered by $\pm 1$).
    \item A (global) \emph{prong-matching} is a collection of cyclic-order-reversing bijections which prescribe the matchings of horizontal geodesics (with respect to the meromorphic differentials) converging to the nodal points from two components.
\end{itemize}
 
\begin{prop}
\label{prop:1.1}
Let $\boldsymbol{\eta}$ be a twisted differential of type $\mu$ and compatible with an enhanced level graph $\Delta$. Then we have the following classification of the spin parity of the multi-scale differentials whose underlying twisted differential is $\boldsymbol{\eta}$:
\begin{enumerate}
    \item If $\Delta$ has any edge of even enhancement (non-zero), then half of the (global) prong-matchings on $\boldsymbol{\eta}$ will give us a multi-scale differential of even spin and half of the prong-matchings will give us a welded surface of odd spin.
   \item  If all the edges in $\Delta$ are of odd enhancements, then the spin parity of any multi-scale differential whose underlying twisted differential is $\boldsymbol{\eta}$ is just the sum of the spin parities of the components of $\boldsymbol{\eta}$.
\end{enumerate}
\end{prop}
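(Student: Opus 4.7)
The plan is to reduce the question to a calculation on a smooth flat surface obtained from $\boldsymbol{\eta}$ by welding at every node, and then analyze how the Arf invariant depends on the prong-matching. First I would fix a global prong-matching $\sigma$ on $\boldsymbol{\eta}$ and perform the welding construction from the multi-scale theory to produce a nearby smooth differential $(X,\omega_\sigma)$ in the stratum. By the fact that the spin parity is locally constant on a connected stratum component, the spin parity of the multi-scale differential $(\boldsymbol{\eta},\sigma)$ equals the Arf invariant of $(X,\omega_\sigma)$ computed via the Johnson quadratic form $q_{\omega_\sigma}$ on $H_1(X;\mathbb{Z}/2)$, using the explicit Formula~\eqref{eq:parity} cited at the end of the outline.

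Next I would set up a symplectic basis of $H_1(X;\mathbb{Z}/2)$ adapted to the level graph $\Delta$. Pick a spanning tree $T\subset \Delta$; for each non-tree edge $e$ let $\alpha_e$ be the horizontal core curve of the welded cylinder at $e$, and $\beta_e$ a dual cycle running through the cylinder. The remaining homology can be represented on the individual components $X_v$ by symplectic bases $(a_v^i,b_v^i)$ (the compact-type edges contribute only to a change of basis and not to the Arf invariant). Applying the Arf formula to this basis decomposes
\begin{align*}
    \phi(X,\omega_\sigma) \;\equiv\; \sum_{v\in V(\Delta)} \phi\bigl(X_v,\boldsymbol{\eta}_v\bigr) \;+\; \sum_{e\notin T}\bigl(1+q_{\omega_\sigma}(\alpha_e)\bigr)\bigl(1+q_{\omega_\sigma}(\beta_e)\bigr) \pmod 2,
\end{align*}
where the component term $\sum_v\phi(X_v,\boldsymbol{\eta}_v)$ is manifestly independent of $\sigma$.

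The heart of the argument is then to track the dependence of the edge term on $\sigma$. An index computation on a horizontal cylinder shows that $q_{\omega_\sigma}(\alpha_e)$ depends only on the prong number $\kappa_e$ and not on the matching, while modifying $\sigma$ at the edge $e$ by one click changes $\beta_e$ by a Dehn twist $T_{\alpha_e}$; Johnson's transformation law gives
\begin{align*}
    q_{\omega_\sigma}\bigl(T_{\alpha_e}\beta_e\bigr) \;\equiv\; q_{\omega_\sigma}(\beta_e) + q_{\omega_\sigma}(\alpha_e) + \langle\alpha_e,\beta_e\rangle \pmod 2.
\end{align*}
A direct winding-number computation shows $q_{\omega_\sigma}(\alpha_e)$ is sensitive precisely to the parity of $\kappa_e$: when $\kappa_e$ is even, one click of prong-matching flips the edge contribution, so cycling through the $\kappa_e$ matchings at $e$ partitions them evenly between the two parities, giving case~(1); when every $\kappa_e$ is odd, the action of the prong-matching on the Arf invariant is trivial and the formula collapses to $\sum_v\phi(X_v,\boldsymbol{\eta}_v)$, giving case~(2).

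The main obstacle I anticipate is justifying cleanly that the welding construction produces a smooth differential whose Johnson form on the adapted symplectic basis yields the values recorded above, in particular pinning down $q_{\omega_\sigma}(\alpha_e)$ as a function of $\kappa_e$ and verifying that the Dehn-twist description of the prong-matching action is correct with the right sign convention. This is essentially a local flat-geometry calculation in a neighborhood of each welded node, but one must handle the bookkeeping for all edges simultaneously and check that the choice of spanning tree and the lifts of the component bases do not spoil the independence of $\sum_v\phi(X_v,\boldsymbol{\eta}_v)$ from $\sigma$.
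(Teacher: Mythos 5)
Your overall architecture — reduce to the welded/plumbed surface, build a symplectic basis out of component cycles plus graph cycles over a spanning tree, then track how the Arf invariant reacts to prong rotations — is exactly the paper's approach. However, the key step of your proposal, the Dehn-twist description of prong rotation, is incorrect, and it actually predicts the opposite parity behavior from what the proposition asserts.

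Concretely: you claim one click at edge $e$ acts on $\beta_e$ by $T_{\alpha_e}$, so the Johnson form changes by $q(T_{\alpha_e}\beta_e) - q(\beta_e) = q(\alpha_e)+\langle\alpha_e,\beta_e\rangle = q(\alpha_e)+1$. But the seam cycle satisfies $\ind(\alpha_e)\equiv\kappa_e$, hence $q(\alpha_e)=\ind(\alpha_e)+1\equiv\kappa_e+1$, so your transformation law says the click changes $q(\beta_e)$ by $\kappa_e\pmod 2$ — i.e.\ nothing when $\kappa_e$ is even, which is precisely the case where the statement says the parity must flip. Worse, the picture is internally inconsistent: $\kappa_e$ clicks must return the prong-matching to itself, yet $T_{\alpha_e}^{\kappa_e}\beta_e = \beta_e+\kappa_e\alpha_e$ is $\beta_e+\alpha_e\neq\beta_e$ in $H_1(\cdot;\mathbb{Z}/2)$ for $\kappa_e$ odd. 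The paper does not model the prong rotation as a Dehn twist at all; it performs a direct geometric re-welding computation (Section~\ref{sec:prong_rotation}) showing that for any graph cycle $\gamma$ through the edge, $\ind(\gamma)-\ind(r_q\gamma) = l \pmod 2$ for an $l$-click rotation, \emph{independent of $\kappa_e$}. Substituting this into $\sum q(a_i)q(b_i)$, the edge contribution $q(\alpha_e)q(\beta_e)$ is identically zero when $\kappa_e$ is odd (since $q(\alpha_e)=0$) and alternates with each click when $\kappa_e$ is even (since $q(\alpha_e)=1$ and $q(\beta_e)$ always flips). That is the correct mechanism, and it is not a Dehn-twist action.

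There is also a secondary gap you partially anticipate but underestimate: the existence of a $\Delta$-adapted symplectic basis is not automatic from ``pick a spanning tree.'' One must arrange the graph cycles so that the connecting arcs on each component do not force intersections, which the paper handles with an iterative spanning-tree procedure (Section~\ref{sec:prong}, motivated by a $K_5$-type obstruction). More importantly for case~(1), your argument requires the even-enhancement edge $e$ to lie \emph{outside} the spanning tree, so that it supports a $(\alpha_e,\beta_e)$ pair. The paper establishes this (Proposition~\ref{prop:adapt}) by a parity argument: every vertex of $\Delta$ meets an even number of even-enhancement edges, so after contracting the odd edges every vertex has even degree, hence every even-enhancement edge is non-separating and can be excluded from a spanning tree. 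Without that fact, your case~(1) argument has no $\beta_e$ to twist at the even edge, and the proof does not close.
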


The boundary divisors $D_\Delta$ in $\mathbb{P}\Xi\overline{\mathcal{M}}_{g,n}(\mu)$ are parametrized by enhanced level graphs $\Delta$ such that either the number of levels is $2$ and there is no \emph{horizontal edge} (an edge connecting two vertices on the same level), or there is exactly one level and one horizontal edge. We denote the set of two-level graphs without horizontal edges of a given stratum by $\LG_1(\mu)$. For short, we call such a level graph $\Delta$ a \emph{vertical two-level graph}. The degree of the map $p|_{D_\Delta}$ is equal to the number of prong matching equivalence classes $\frac{\prod_{e\in E(\Delta}\kappa_e}{\ell_\Delta}$, where $\ell_\Delta$ is the l.c.m. of the enhancements. In addition, the image $p(D_\Delta)$ will be equal to the image of
\begin{align*}
    \xi_{\Delta}:p^\top(\mathbb{P}\Xi\overline{\mathcal{M}}_{\Delta,\top})\times p^\bot(\mathbb{P}\Xi\overline{\mathcal{M}}_{\Delta,\bot})\subset \overline{\mathcal{M}}_\Delta\longrightarrow \overline{\mathcal{M}}_{g,n},
\end{align*}
where $\mathbb{P}\Xi\overline{\mathcal{M}}_{\Delta,\top}$ and $\mathbb{P}\Xi\overline{\mathcal{M}}_{\Delta,\bot}$ are the level strata corresponding to the top and bottom level of $\Delta$.
 We have the following corollary to Proposition~\ref{prop:1.1}. We define $[D_\Delta]^{\spin}=[D_\Delta^+]- [D_\Delta^-]\in \CH^1(\mathbb{P}\Xi\overline{\mathcal{M}}_{g,n}(\mu))$.
 
 \begin{cor}
\label{cor:1.3}
Let $p:\mathbb{P}\Xi\overline{\mathcal{M}}_{g,n}(\mu)\longrightarrow\overline{\mathcal{M}}_{g,n}$ be the proper projection morphism. For $\Delta\in \LG_1(\mu)$, if all edges of $\Delta$ are of odd enhancements, then
\begin{align*}
   &p_*\big([D_\Delta]^{\spin}\big)\in \CH^*(\overline{\mathcal{M}}_{g,n})
   \\=&\frac{\prod_{e\in E(\Delta)}\kappa_e}{|\Aut(\Delta)|\ell_\Delta}\xi_{\Delta*}\bigg(\pi_\top^*([\mathbb{P}\Xi\overline{\mathcal{M}}_{\Delta,\top}]^{\spin})\cdot\pi_\bot^*([\mathbb{P}\Xi\overline{\mathcal{M}}_{\Delta,\bot}]^{\spin}) \bigg)
\end{align*}
 Otherwise, if there is an edge of even enhancement, then we have \[p_*[D_\Delta]^{\spin}=0.\]
\end{cor}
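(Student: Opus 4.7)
The proof will be based on Proposition~\ref{prop:1.1} together with a direct analysis of the restricted projection
\begin{equation*}
p|_{D_\Delta}\colon D_\Delta\longrightarrow p(D_\Delta)\subseteq \overline{\mathcal{M}}_{g,n}.
\end{equation*}
My plan is to first describe $D_\Delta$ as (the quotient by $\Aut(\Delta)$ of) a finite cover of $\mathbb{P}\Xi\overline{\mathcal{M}}_{\Delta,\top}\times \mathbb{P}\Xi\overline{\mathcal{M}}_{\Delta,\bot}$ whose fibre over a pair of compatible twisted differentials is the set of global prong-matching equivalence classes, of cardinality $\prod_{e\in E(\Delta)}\kappa_e/\ell_\Delta$. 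Composing with $p^\top\times p^\bot$ and then with $\xi_\Delta$ recovers $p|_{D_\Delta}$, so the prefactor $\tfrac{\prod\kappa_e}{|\Aut(\Delta)|\ell_\Delta}$ in the target formula will arise exactly from the degree of this cover divided by $|\Aut(\Delta)|$.

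For the second (vanishing) case I would invoke Proposition~\ref{prop:1.1}(1): as soon as $\Delta$ has an edge of even enhancement, for every compatible twisted differential exactly half of the prong-matchings produce an even-spin multi-scale differential and the other half an odd-spin one. Hence the two subcovers $D_\Delta^+$ and $D_\Delta^-$ each have degree $\tfrac{1}{2}\cdot\tfrac{\prod\kappa_e}{\ell_\Delta}$ over the same base image in $\overline{\mathcal{M}}_\Delta$, giving $p_*[D_\Delta^+]=p_*[D_\Delta^-]$ and therefore $p_*[D_\Delta]^{\spin}=0$.

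For the first (product formula) case I would apply Proposition~\ref{prop:1.1}(2), which asserts that when every enhancement is odd the spin parity of $(\boldsymbol{\eta},\mathrm{pm})$ depends only on the top and bottom components of $\boldsymbol{\eta}$ and equals the sum of their individual spin parities. Consequently $D_\Delta^{\pm}$ decompose as unions indexed by pairs of top/bottom parities $(\epsilon_\top,\epsilon_\bot)\in\{\pm\}^2$, with the product $\epsilon_\top\epsilon_\bot$ selecting which side of $D_\Delta^\pm$ each piece lands on, and every such piece is a full prong-matching cover. The alternating sum therefore factorises: pulled back to $D_\Delta$, the class $[D_\Delta]^{\spin}$ is the prong-matching fibre count $\prod\kappa_e/\ell_\Delta$ times the external product of $[\mathbb{P}\Xi\overline{\mathcal{M}}_{\Delta,\top}]^{\spin}$ and $[\mathbb{P}\Xi\overline{\mathcal{M}}_{\Delta,\bot}]^{\spin}$. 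Pushing forward via $p^\top\times p^\bot$, then via $\xi_\Delta$, and dividing by $|\Aut(\Delta)|$ to account for the automorphism quotient, the projection formula delivers the claimed identity.

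The main obstacle I anticipate is the careful bookkeeping of multiplicities: one must verify that the degree of the prong-matching cover is genuinely the equivalence-class count $\prod_e\kappa_e/\ell_\Delta$ (rather than $\prod_e\kappa_e$, with $\ell_\Delta$ appearing from the prong-rotation action), and that the $\Aut(\Delta)$ factor enters with the correct exponent when $\Delta$ possesses symmetries permuting vertices on a single level. A secondary but delicate point is the interpretation of $\pi_\top^{*}([\mathbb{P}\Xi\overline{\mathcal{M}}_{\Delta,\top}]^{\spin})$ as a class on $\overline{\mathcal{M}}_\Delta$ (implicitly via $p^\top_*$ followed by $\pi_\top^{*}$), and ensuring that pushforward through $\xi_\Delta$ converts the external product into the pushforward on $\overline{\mathcal{M}}_{g,n}$ without hidden excess-intersection contributions.
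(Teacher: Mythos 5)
Your proposal is correct and follows the same route the paper intends. The paper does not write out a proof of Corollary~\ref{cor:1.3}; it treats the statement as an immediate consequence of Proposition~\ref{prop:1.1} together with the two facts recorded just before the corollary in the introduction, namely that $\deg(p|_{D_\Delta}) = \prod_{e\in E(\Delta)}\kappa_e/\ell_\Delta$ (the number of prong-matching equivalence classes) and that $p(D_\Delta)$ coincides with the image of $\xi_\Delta$. Your argument fills in precisely this reasoning: Proposition~\ref{prop:1.1}(1) splits the prong-matching cover evenly between $D_\Delta^+$ and $D_\Delta^-$ when an even enhancement is present, killing $p_*[D_\Delta]^{\spin}$; Proposition~\ref{prop:1.1}(2) makes the parity additive across levels when all enhancements are odd, so the spin class factorises as the external product $[\mathbb{P}\Xi\overline{\mathcal{M}}_{\Delta,\top}]^{\spin}\cdot[\mathbb{P}\Xi\overline{\mathcal{M}}_{\Delta,\bot}]^{\spin}$ weighted by the prong-matching degree, with the $1/|\Aut(\Delta)|$ entering because $\xi_\Delta$ has degree $|\Aut(\Delta)|$ onto its image. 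Your encoding of the additive parity rule via the product $\epsilon_\top\epsilon_\bot$ of signs is the right way to track the alternating sum, and your worry about excess intersection is unfounded here since the formula involves only a pushforward along a finite map, not a pullback.
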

Although Corollary~\ref{cor:1.3} will not be applied directly in the computation of the clutching pullback a spin stratum class, it will be used combining with the residue resolving (Proposition~\ref{prop:res_resl}). Furthermore, the principle of Corollary~\ref{cor:1.3} is used to derive the Clutching Pullback Formula of a spin stratum class. More precisely, if $\Delta$ has any edge of even enhancement, then the term corresponding to $\Delta$ in the Clutching Pullback Formula will be zero. Otherwise, the term corresponding to $\Delta$ will be just a product of the spin stratum classes on the top level and that on the bottom level.

\begin{thm}\label{prop:main_result1}
The spin stratum classes $[ \overline{\mathcal{H}}_{g}(\mu)]^{\spin}\in \mathrm{H}^{*}(\overline{\mathcal{M}}_{g,n};\mathbb{Q})$ are recursively computable.
\end{thm}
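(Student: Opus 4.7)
The plan is to prove Theorem~\ref{prop:main_result1} by strong induction on the dimension $d=3g-3+n$ of $\overline{\mathcal{M}}_{g,n}$, with the inductive hypothesis that $[\overline{\mathcal{H}}_{g'}(\mu')]^{\spin}$ is already computable for every even-type signature $\mu'$ whose moduli space has dimension strictly less than $d$. First I would invoke the injectivity of the total clutching pullback
\[
\bigoplus_\Gamma \xi_\Gamma^*:\mathrm{H}^{j}(\overline{\mathcal{M}}_{g,n};\mathbb{Q})\longrightarrow \bigoplus_\Gamma \mathrm{H}^{j}(\overline{\mathcal{M}}_\Gamma;\mathbb{Q})
\]
in the cohomological degree $j$ carrying $[\overline{\mathcal{H}}_g(\mu)]^{\spin}$, which holds by Proposition~\ref{prop:arbarello} strengthened by Theorem~\ref{thm:vanish}. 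This reduces the problem to computing each individual pullback $\xi_\Gamma^*[\overline{\mathcal{H}}_g(\mu)]^{\spin}$ as $\Gamma$ ranges over one-edge stable graphs, and then solving the resulting overdetermined but consistent linear system for $[\overline{\mathcal{H}}_g(\mu)]^{\spin}$.

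Next I would compute each such pullback by lifting through the moduli of multi-scale differentials via the proper projection $p:\mathbb{P}\Xi\overline{\mathcal{M}}_{g,n}(\mu)\to\overline{\mathcal{M}}_{g,n}$, using the identity $p_*[\mathbb{P}\Xi\overline{\mathcal{M}}_{g,n}(\mu)]^{\spin}=[\overline{\mathcal{H}}_g(\mu)]^{\spin}$. The Clutching Pullback Formula~\eqref{eq:excess_p} expresses $\xi_\Gamma^*p_*$ as a sum indexed by vertical two-level graphs $\Delta\in\LG_1(\mu)$ together with horizontal contributions. By the principle underlying Corollary~\ref{cor:1.3}, the terms attached to any $\Delta$ carrying an edge of even enhancement cancel under the spin decomposition, while the terms attached to $\Delta$ with all odd enhancements factor as a product $\pi_\top^*[\mathbb{P}\Xi\overline{\mathcal{M}}_{\Delta,\top}]^{\spin}\cdot\pi_\bot^*[\mathbb{P}\Xi\overline{\mathcal{M}}_{\Delta,\bot}]^{\spin}$, times explicit combinatorial prefactors and tautological excess classes. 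Pushing forward through $\xi_\Delta$ then yields products of spin stratum classes on strictly smaller-dimensional strata (supplied by the inductive hypothesis), together with excess $\xi^{\spin}$ and $\psi^{\spin}$ contributions to be handled via the residue resolving of Section~\ref{sec:resolve_res}.

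The step I expect to be the main obstacle is verifying that the recursion genuinely terminates, i.e.\ that every spin stratum class appearing on the right-hand side of~\eqref{eq:excess_p} lives on a moduli space of strictly smaller dimension than $\overline{\mathcal{M}}_{g,n}$. For compact-type one-edge graphs this is immediate since $g$ and $n$ are partitioned between the two vertex components; for the non-separating self-loop clutching $\overline{\mathcal{M}}_{g-1,n+2}\to\overline{\mathcal{M}}_{g,n}$, the count $3(g-1)-3+(n+2)=3g-3+n-1$ still reduces dimension by one; and for the vertical splittings within $\mathbb{P}\Xi\overline{\mathcal{M}}_{g,n}(\mu)$ each level stratum carries strictly fewer levels than the original, so repeated application eventually bottoms out. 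The base cases are the genus-zero strata (where the spin decomposition is degenerate or absent) and strata consisting of a single spin component, where $[\overline{\mathcal{H}}_g(\mu)]^{\spin}$ either vanishes or equals $\pm[\overline{\mathcal{H}}_g(\mu)]$, and is therefore supplied by existing stratum-class algorithms such as those of \cite{holmes2021infinitesimal,bae2020pixton}.
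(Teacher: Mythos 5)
Your skeleton---induction on dimension, injectivity of clutching pullbacks, the Clutching Pullback Formula~\eqref{eq:excess_p}, residue resolving---does track the paper's approach, but there is a genuine gap at the step you yourself flag as the main obstacle, and the base cases are misstated.

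The gap is the assertion in your first paragraph that the injectivity of $\oplus_\Gamma\xi_\Gamma^*$ holds ``in the cohomological degree $j$ carrying $[\overline{\mathcal{H}}_g(\mu)]^{\spin}$.'' It does not always. Because the self-loop clutching $\xi_{\Gamma_0}$ produces a pair of simple poles matched by a residue condition, the recursion necessarily passes through signatures of the form $\mu'=(a,-b,(-1,-1)^t)$ with $t\geq 1$ on $\overline{\mathcal{M}}_{g',2+2t}$. There the cohomological degree of the spin stratum class is $2g'+2t$ (codimension $g'$ for the meromorphic stratum plus $t$ for the residue constraints), while the injectivity range \eqref{eq:range_inj} gives $d(g',2+2t)=2g'-1+2t$; the degree exceeds the range by one, so Proposition~\ref{prop:arbarello} gives you no linear system to solve and your inductive step does not go through for these signatures. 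The bulk of the paper's proof is exactly this degree bookkeeping (separately for $g=0$ and $g>0$, and for holomorphic versus meromorphic entries), and the out-of-range signatures $(a,-b,(-1,-1)^t)$ are handled not by clutching pullback but by Proposition~\ref{prop:sim_res_resolve}, which resolves one pair of simple poles and rewrites $[\mathbb{P}\Xi\overline{\mathcal{M}}^{\mathfrak{R}}_{g,n}(\mu)]^{\spin}$ in terms of boundary classes of the ambient stratum. You invoke residue resolving only as an auxiliary for $\xi^{\spin}$ and $\psi^{\spin}$ terms, not as the essential replacement for the clutching step; as written, your induction applies an injectivity statement outside its valid range.

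Your base cases are also wrong. Genus-zero strata with paired simple poles do \emph{not} have ``degenerate or absent'' spin: $\mu=(0,-1,-1)$ has odd parity (Proposition~\ref{prop:base_case1}), and $[\overline{\mathcal{H}}^{\mathfrak{R}}(2k,-2k,-1,-1)]^{\spin}=\psi_1$ (Proposition~\ref{prop:base_case2}), where the zero-dimensional stratum is not a single spin component, so the class is neither $0$ nor $\pm$ the stratum class. These are precisely the genus-zero signatures for which the degree first exceeds $d(g,n)$ (degree $0>-1=d(0,3)$, respectively $2>1=d(0,4)$), and the paper supplies them as explicit terminal cases. Citing existing stratum-class algorithms cannot determine the spin splitting there.
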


Theorem~\ref{prop:main_result1} only guarantees a solution in $\mathrm{H}^*(\overline{\mathcal{M}}_{g,n},\mathbb{Q})$. In practice, we work with the \texttt{Sage} package \texttt{admcycles}, by which we can only compute the solution in the tautological ring. Since \texttt{admcycles} uses the generalised Faber-Zagier relations (cf. \cite{pixton2012conjectural}, \cite{pandharipande2015relations},\cite{janda2017relations}) to compute for a basis of the tautological ring. As a result, the correctness of our implemented algorithm will depend on the assumption that the generalised Faber-Zagier relations span the complete set of relations of the decorated strata classes (which serve as the additive generators) in the tautological ring. We have the following conditional result:

\begin{thm}\label{prop:cond_result}
Assume that the generalised Faber-Zagier relations span the whole set of relations of the decorated strata classes in the tautological ring. If the spin stratum classes are tautological and the assumption of the sufficiency of compact type clutching (Assumption~\ref{prop:assum}) holds, for all $g\leq g_0$ and $n\leq n_0$, then the algorithm we implemented in \texttt{admcycles} will yield the correct spin stratum classes $[ \overline{\mathcal{H}}_{g}(\mu)]^{\spin}$ such that $g\leq g_0$ and $n\leq n_0$.
\end{thm}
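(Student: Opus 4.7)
The plan is to prove correctness by induction on the pair $(g,n)$ ordered lexicographically (or, more precisely, by any well-founded order in which the graphs that appear on the right-hand side of the Clutching Pullback Formula are of strictly smaller complexity than $(g,n)$). The hypotheses of the theorem will be used at three distinct places: tautologicality of the spin stratum classes puts the unknown into a finitely-generated $\mathbb{Q}$-vector space, the sufficiency of compact type clutching turns the system of linear equations produced by clutching pullbacks into an injective one, and completeness of the generalised Faber-Zagier relations ensures that \texttt{admcycles} computes the correct normal form (hence correctly recognises both the known right-hand side and the unknown left-hand side inside $\RH^*(\overline{\mathcal{M}}_{g,n})$).

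For the base cases I would treat those pairs $(g,n)$ for which the stratum is empty or connected with uniform spin parity, so that $[\overline{\mathcal{H}}_g(\mu)]^{\spin}$ equals $\pm[\overline{\mathcal{H}}_g(\mu)]$ (or zero), a class that \texttt{admcycles} already computes from the Pixton/Bae-Schmitt-Holmes-Pandharipande-Schmitt formulas cited in the introduction. For the inductive step, the algorithm first fixes a basis of $\RH^j(\overline{\mathcal{M}}_{g,n})$ (with $j=2g$ or $j=2g-2$); by the assumption on Faber-Zagier relations this basis is produced correctly. It then writes $[\overline{\mathcal{H}}_g(\mu)]^{\spin}$ as an unknown linear combination of this basis, applies each clutching pullback $\xi_\Gamma^*$ for $\Gamma$ a one-edge stable graph of compact type, and sets the result equal to the right-hand side supplied by the Clutching Pullback Formula~\eqref{eq:excess_p} together with Corollary~\ref{cor:1.3}. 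By that formula and Proposition~\ref{prop:1.1}, every term on the right either vanishes (edges of even enhancement) or is a push-forward of a product of spin stratum classes on $\overline{\mathcal{M}}_{g_v,n_v}$ for $v$ a vertex of a two-level graph; since $\Gamma$ is of compact type, each such $(g_v,n_v)$ is strictly smaller than $(g,n)$, so by the inductive hypothesis these smaller spin stratum classes have been correctly computed and lie in the tautological ring.

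At this point the algorithm has assembled a system $\oplus_\Gamma \xi_\Gamma^*(x) = \oplus_\Gamma c_\Gamma$ whose unknown $x\in \RH^j(\overline{\mathcal{M}}_{g,n})$ is the sought spin stratum class and whose right-hand side is a tuple of explicitly known tautological classes. Assumption~\ref{prop:assum} (the sufficiency of compact type clutching) says precisely that the restriction to the tautological ring of $\oplus_\Gamma \xi_\Gamma^*$ over one-edge compact type graphs is injective in the relevant codimension, so the system has at most one solution; the actual spin stratum class (which is tautological by hypothesis) furnishes a solution, hence the unique solution returned by the linear solver is correct. Here I would also note that, thanks to the vanishing Theorem~\ref{thm:vanish} strengthening Proposition~\ref{prop:arbarello}, one knows injectivity already in cohomology for the full family of one-edge graphs; Assumption~\ref{prop:assum} is the further refinement that removes the self-loop graph after restriction to $\RH^*$, which is what keeps the recursion of tractable complexity.

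The main obstacle is bookkeeping the recursion: one has to verify that for every one-edge compact type graph $\Gamma$ appearing as an index in the system, both $(g_v,n_v)$ lie in the range $g\le g_0,\; n\le n_0$ for which the hypotheses were assumed, so that the inductive hypothesis indeed applies. This is not automatic because the genus distribution $g=g_1+g_2$ and the redistribution of the $n$ marked points plus the extra half-edges can produce auxiliary strata whose number of markings is larger than $n$ on a vertex; one has to check that these still satisfy $n_v\le n_0$ within the range \eqref{eq:range1}--\eqref{eq:range2} targeted by the implementation, and that strata with meromorphic signatures produced on the lower level are admissible inputs of the recursion. Once this combinatorial check is carried out, the three hypotheses of the theorem combine, as above, to conclude correctness of the output for all $(g,n)$ with $g\le g_0,\ n\le n_0$.
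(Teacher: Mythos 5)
Your high-level framework (induction, with the three hypotheses used to put the unknown in a finite-dimensional space, to make the pullback system injective, and to trust the \texttt{admcycles} normal form) is the right skeleton and matches the paper. However, there are genuine gaps. First, your base cases are misidentified: the recursion bottoms out at genus-zero signatures, and in particular at $(0,-1,-1)$ and $(2k,-2k,-1,-1)$ (Propositions~\ref{prop:base_case1} and~\ref{prop:base_case2}), which are handled by explicit flat-surface arguments and have nothing to do with the Pixton/Bae--Holmes--Pandharipande--Schmitt formula; in particular $[\overline{\mathcal{H}}^\mathfrak{R}(2k,-2k,-1,-1)]^{\spin}=\psi_1$ is not a case of ``uniform parity.'' Second, you describe the right-hand side of the Clutching Pullback Formula~\eqref{eq:excess_p} as a push-forward of products of spin stratum classes on $\overline{\mathcal{M}}_{g_v,n_v}$, but the terms in that formula are spin classes of \emph{level strata}, i.e.\ generalised strata (possibly disconnected and carrying residue conditions inherited via the GRC). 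Turning these into inputs of the recursion requires Propositions~\ref{prop:res_resl}, \ref{prop:xi} and~\ref{prop:sim_res_resolve} to resolve residues, plus the separate treatment of horizontal level graphs of compact type in Section~\ref{sec:horG}; omitting these steps means the induction does not actually reduce to smaller instances of the problem.

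Third, the bookkeeping you flag as ``the main obstacle'' is where the real content lies, and it is left undone in your proposal. A naive lexicographic order on $(g,n)$ does not trivially work, because clutching pullback along $\Gamma_0$ (unavoidable when $n\le 2$ for a meromorphic stratum) produces a stratum of genus $g-1$ with two more markings, and residue resolving can raise the codimension. The paper's argument for this theorem is exactly the discussion preceding it (items (i)--(iv) in the section on the algorithm): Assumption~\ref{prop:assum} bounds the number of paired simple poles introduced to at most one, shows $\Gamma_0$-pullbacks only occur when the signature has length at most~$2$ so the auxiliary signature has length at most~$4$, controls horizontal level graphs of compact type, and shows the $\mu=(m_0,m_1)$ chain terminates in the explicit base cases. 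Without reproducing this analysis, your proposal does not establish that the recursion stays inside the range where the theorem's hypotheses were assumed, so the inductive hypothesis may not be available at the point you invoke it.
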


By the implemented algorithm, we have computed the spin stratum classes of plenty of different signatures for $(g,n)$ within the range \eqref{eq:range2}, where the tautological ring is just the cohomology ring and the assumption of Faber-Zagier relations can be verified by Poincar\'e duality. Thus, in such range, our results are just the exact results. 

\subsection{Applications}

First, we want to give an overview of the strategies to compute cycle classes $[\overline{\mathcal{H}}^k_g(\mu)]$ in $\CH^*(\overline{\mathcal{M}}_{g,n})$ or $\mathrm{H}^*(\overline{\mathcal{M}}_{g,n})$. In \cite{sauvaget2019cohomology}, Sauvaget has constructed an algorithm to compute the stratum class of $1$-differentials $[\overline{\mathcal{H}}_g(\mu)]$ in $\mathrm{H}^*(\overline{\mathcal{M}}_{g,n})$ and he shows that the stratum classes are tautological. Later in \cite{farkas2018moduli}, Farkas and Pandhahripande conjectured that the sum of the stratum class of $1$-differentials and the boundary terms is equal to the twisted double ramification cycle (the definition will be recalled in Section~\ref{sec:multi-scale}). In \cite{schmitt2016dimension}, Schmitt extended this conjecture to $k$-differentials and gave a recursion to extract the stratum classes $[\overline{\mathcal{H}}^k_g(\mu)]$ from the conjectured equations. This conjecture has been finally proved by \cite{holmes2021infinitesimal} and \cite{bae2020pixton}. 

In \cite{costantini2021integrals}, Constantini, Sauvaget and Schmitt have constructed the spin variant of the twisted double ramification cycles. In addition, they conjectured an equation that is similar to the one conjectured by Farkas and Pandharipande in \cite{farkas2018moduli}, stating that the sum of the spin stratum class and some boundary terms is equal to the spin double ramification cycle. (The exact formulation will be recalled in Conjecture~\ref{conj:spin_pixton} in Section~\ref{sec:spin_DR_conj}.) Within the range in List~\eqref{eq:range1}, the spin stratum classes we computed agree with Conjecture~\ref{conj:spin_pixton}. Thus, the algorithm of our paper can help verify the conjecture in the range where computer power is sufficient to perform the computation.

Note that all the spin stratum classes of holomorphic strata of $1$-differentials are computed for $g\leq 4$, and they lie in the tautological ring (see Appendix~\ref{appendix:results}). Hence, Conjecture~\ref{conj:spin_pixton} will enable us to recursively compute the spin stratum classes $[\mathcal{H}^k_g(\mu)]^{\spin}$ by tautological classes. Hence, if we assume the conjecture about the spin stratum classes (Conjecture~\ref{conj:spin_pixton}) to be true, then we have the following proposition:
\begin{prop}
Let $k$ be an odd positive integer and $\mu$ be a signature of even type. If Conjecture~\ref{conj:spin_pixton} is true, then the spin stratum classes $[\mathcal{H}^k_g(\mu)]^{\spin}$ are tautologcial for $g\leq 4$.
\end{prop}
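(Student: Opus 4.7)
The plan is to use Conjecture~\ref{conj:spin_pixton} as a recursive scheme whose base case is provided by the explicitly computed spin stratum classes of holomorphic $1$-differentials in Appendix~\ref{appendix:results}. First, I would recall the shape of the conjecture: it equates a spin double ramification cycle $\mathrm{DR}^{\spin}_g(\mu)$ (which, like Pixton's ordinary DR cycle, lies in $\RH^*(\overline{\mathcal{M}}_{g,n})$ by construction) with the sum of the desired spin stratum class $[\overline{\mathcal{H}}^k_g(\mu)]^{\spin}$ and a collection of boundary correction terms. Each correction term is a clutching pushforward $\xi_{\Gamma*}$ of products, over the vertices of $\Gamma$, of spin stratum classes $[\overline{\mathcal{H}}^{k_v}_{g_v}(\mu_v)]^{\spin}$ decorated by tautological $\psi$- and $\kappa$-classes arising from residue conditions.

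Next, I would set up an induction analogous to Schmitt's recursion in \cite{schmitt2016dimension}, ordering pairs $(k,\mu)$ by a lexicographic complexity measure (for instance, the sum of the pole orders, or the pair consisting of the number of negative entries and the largest pole order). Rearranging the conjectural equation gives
\begin{align*}
    [\overline{\mathcal{H}}^k_g(\mu)]^{\spin} \;=\; \mathrm{DR}^{\spin}_g(\mu) \;-\; \sum_{\Gamma} \xi_{\Gamma*}\!\Bigl(\,\prod_{v\in V(\Gamma)} [\overline{\mathcal{H}}^{k_v}_{g_v}(\mu_v)]^{\spin}\cdot(\text{tautological decorations})\Bigr),
\end{align*}
where the sum on the right runs over level graphs contributing to the boundary, each vertex satisfying $g_v\le g\le 4$. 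The base case is Appendix~\ref{appendix:results}: for $g\le 4$ and every even-type holomorphic signature $\mu$ with $\sum m_i=2g-2$, the class $[\overline{\mathcal{H}}_g(\mu)]^{\spin}$ has been computed and verified to be tautological. In the inductive step, $\mathrm{DR}^{\spin}_g(\mu)$ is tautological by definition, and by the inductive hypothesis each vertex factor $[\overline{\mathcal{H}}^{k_v}_{g_v}(\mu_v)]^{\spin}$ is tautological. Since $\RH^*(\overline{\mathcal{M}}_{g,n})$ is preserved under clutching pushforwards and under multiplication by $\psi$- and $\kappa$-classes, the entire right-hand side lies in the tautological ring, so the same holds for $[\overline{\mathcal{H}}^k_g(\mu)]^{\spin}$.

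The main obstacle is bookkeeping: one has to verify that every boundary term of Conjecture~\ref{conj:spin_pixton} genuinely strictly decreases the chosen complexity measure and, after finitely many steps of the recursion, only strata of holomorphic $1$-differentials in the tabulated range $g\le 4$ remain. Since the spin conjecture in \cite{costantini2021integrals} is formulated in direct analogy with \cite{schmitt2016dimension} (and the bound $g_v\le g\le 4$ on vertex genera is automatic from the topology of boundary graphs), the same termination argument used in the non-spin case carries over, and the induction closes.
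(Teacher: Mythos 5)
Your overall strategy fills in the paper's very terse reasoning, which is exactly as you describe: solve Conjecture~\ref{conj:spin_pixton} for $[\overline{\mathcal{H}}^k_g(\mu)]^{\spin}$, noting that $\DR^{\spin}_g(a_\mu)$ is tautological by construction and that the remaining boundary contributions are recursively tautological. However, the complexity measures you propose do not close the induction. Passing from $\mu$ to the center vertex $v_0$ of a non-trivial simple star graph \emph{increases} both the number of negative entries and the total pole order, since the half-edges at $v_0$ are poles of order $-\kappa_e - k$. What strictly decreases is the genus of the center vertex: a top-level vertex $v$ of $\Delta \in SG_1^{odd}(\mu)$ carries no marked poles, and the divisibility requirement on top-level orders forces its edge orders $\kappa_e - k$ to be non-negative (as $\kappa_e$ divisible by $k$ and $\kappa_e\ge 1$ imply $\kappa_e\ge k$), so $v$ carries a holomorphic differential and therefore $g(v)\ge 1$. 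As every such $\Delta$ has at least one top-level vertex, $g(v_0)<g$. The induction should therefore run on genus, grounded at $g=0$ where $\RH^*(\overline{\mathcal{M}}_{0,n})=\mathrm{H}^*(\overline{\mathcal{M}}_{0,n};\mathbb{Q})$.

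Two further corrections. The summands on the right-hand side of Equation~\eqref{eq:spin_pixton} are clutching pushforwards of plain products of spin stratum classes; there are no $\psi$- or $\kappa$-decorations, so the closure argument you invoke for them is unnecessary. More substantively, the product is asymmetric: only the center vertex contributes a spin stratum class of \emph{$k$-differentials}, while every top-level vertex contributes a spin stratum class of holomorphic \emph{$1$-differentials} $[\overline{\mathcal{H}}_{g(v)}(\mu[v]/k,(\kappa[v]-k)/k)]^{\spin}$. Your uniform notation $[\overline{\mathcal{H}}^{k_v}_{g_v}(\mu_v)]^{\spin}$ hides the fact that the top-level factors are precisely the classes supplied by Appendix~\ref{appendix:results} (tautological for $g(v)\le 4$), whereas only the single center factor is handled by the genus induction.
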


\paragraph{\textbf{Acknowledgement}}
The authors acknowledge support by the DFG under Grant MO1884/2-1
and the Collaborative Research Centre TRR 326 Geometry and
{\it Arithmetic of Uniformized Structures}, project number 444845124. In addition, the author thanks the advice of Martin Möller and the comments by Johannes Horn, Johannes Schmitt, Adrien Sauvaget and Matteo Constantini.

\section{Background}
\label{sec:Background}

In this section, we will first define the spin parity of a flat surface (or equivalently a differential on a compact Riemann surface). Then we will introduce multi-scale differentials and their associated welded surfaces. Furthermore, we will explain how to extend the definition of spin parity to a welded surface and the plumbed surface associated to a multi-scale differential. In the end, we will explain why the spin parity of a welded surface is equal to that of a plumbed surface.

\subsection{Spin parities}\label{sec:spin} In this subsection, we introduce the spin parity of a flat surface. First, we start with some terminology on a topological surface, then we will dive into the spin parity on a flat surface. By abusing symbols, we will denote a simple loop on a topological surface $X$ and its corresponding cycle in $\mathrm{H}_1(X,\mathbb{Z})$ both by the same letter. 

The \emph{algebraic intersection number} $i(\bullet,\bullet)$ induces a \emph{symplectic form} on $\mathrm{H}_1(X,\mathbb{Z})$, where $X$ is a compact orientable surface. Two cycles $\alpha,\beta$ form a \emph{symplectic pair} if their intersection number $i(\alpha,\beta)$ is $\pm 1$. Let the genus of $X$ be $g$. Then a generating set $\{\gamma_1,...,\gamma_{2g}\}$ of $\mathrm{H}_1(X,\mathbb{Z})$ is a \emph{symplectic basis} if it can be partitioned into symplectic pairs such that any two cycles from different symplectic pairs have zero intersection number.  

Let $(X,\omega)$ be a flat surface induced by a differential $\omega$ on the compact Riemann surface $X$ and $\gamma:[0,1]\longrightarrow (X,\omega)$ be a smooth simple curve not passing through the conical singularities. On a flat surface, parallel translations along a smooth simple loop (avoiding the conical singularities) will preserve the direction of a vector. This implies that we can define the north direction at each point (except for the conical singularities) simultaneously and define a continuous map 
\begin{align*}
 T_{\gamma}: [0,1]\longrightarrow S^1 \quad
 \theta \mapsto \dot{\gamma}(\theta)/\norm{\dot{\gamma}(\theta)}    
\end{align*}
There is a unique lift $\widetilde{T}_\gamma:[0,1]\longrightarrow \mathbb{R}$ if we fix the starting point on $\mathbb{R}$ to be the origin. We define the \emph{turning number} or \emph{index} $\ind(\gamma)$ to be $\widetilde{T}_\gamma(1)$. If $\omega$ is a differential of even type, then $\ind(\gamma) \pmod 2$ will be invariant under the deformation of $\gamma$. Let $\{a_1,...,a_g,b_1,...,b_g\}$ be a symplectic basis of $\mathrm{H}_1(X,\mathbb{Z})$. Then the \emph{parity of spin} of a flat surface $(X,\omega)$ is defined as:
 
 \begin{equation}\label{eq:parity}
      \phi((X,\omega))= \sum_{i=1}^{g}(\ind(a_i)+1)(\ind(b_i)+1) \pmod 2  
 \end{equation}
It has been shown in \cite{johnson1980spin} that the parity of spin does not depend on the choice of a symplectic basis and is a locally constant function on a stratum of differentials of even type. We have now defined the parity of spin on a compact flat surface. Actually, we can extend the definition to the case that the differential $\omega$ of the pair $(X,\omega)$ is meromorphic.
Let $P$ be the set of poles of the meromorphic differential $\omega$, then one can realize $X\setminus P$ by a noncompact orientable surface carrying a flat metric with conical singularities. We call such a flat realization of a meromorphic differential a \emph{noncompact flat surface}. A symplectic basis on $X$ can be realised by smooth simple closed loops on $X\setminus P$ which circumvent the conical singularities. By observation, one can see that if $\omega$ is of even type, the turning numbers $\pmod 2$ are independent of the choice of realizations. Hence, the definition of spin parities in~(\ref{eq:parity}) can be also applied to noncompact flat surfaces associated to meromorphic differentials of even type (cf. \cite{boissy2015connected}). 

\begin{rem}
Note that there is another equivalent definition of spin parities from the perspective of theta characteristics. A \emph{theta characteristic} is a line bundle $L$ on a Riemann surface $X$ such that $L^{\otimes 2}\simeq \omega_X$, where $\omega_X$ is the canonical sheaf on $X$. Given a flat surface $(X,\omega)$, where $\omega$ is of even type, the sheaf $\mathcal{O}_X(\frac{1}{2}\divisor(\omega))$ is a theta characteristic. The rank of the group of global sections $h^0(X,\mathcal{O}_X(\frac{1}{2}\divisor(\omega))) \pmod 2$ coincides with the parity of spin we defined above (cf. \cite{johnson1980spin}).
\end{rem}

\begin{rem}
There are two different ways to define the spin parity of a $k$-differential of even type $\eta$ on a compact Riemann surface $X$. 

The first definition can be applied to any $k\in \mathbb{N}$: one can construct a \emph{canonical cover} $\phi:\hat{X}\longrightarrow X$ (cf. \cite{chen2021towards}) such that $\phi^*\eta=\omega^{\otimes k}$ where $\omega$ is a $1$-differential on $\hat{X}$; the spin parity of $\eta$ is then defined to be that of $\omega$. 

The second definition can be applied to any $k\in \mathbb{N}\setminus 2\mathbb{N}$: one can define a theta characteristic
\[L=\omega_X^{\otimes (-k+1)/2}\otimes \mathcal{O}_X(\frac{1}{2}\divisor(\eta)); \]
the spin parity of $\eta$ is defined to be $h^0(X,L) \pmod 2$. Notice that for $k>1$, these two definitions do not coincide with each other. For example, if $X$ is a genus $0$ curve, then the spin parity of a $k$-differential $\eta$ with respect to the first definition may be odd while the spin parity of $\eta$ with respect to the second definition is always even. In this text, when we mention the spin components of a stratum of $k$-differentials $\mathcal{H}^k_g(\mu)\subset \overline{\mathcal{M}}_{g,n}$, the spin parity refers to the second definition.
\end{rem}

\subsection{Multi-scale differentials and prong-matchings}\label{sec:multi-scale} We will in this subsection recall the definition of twisted $k$-differentials, enhanced level graphs and prong-matchings. In order to define $k$-twisted canonical divisors, we introduce twisted differentials and enhanced level graphs for general values of $k$. However, in the definition of multi-scale differentials and the rest of the text, we will mainly focus on the case that $k=1$.

\begin{df}
     A \emph{twisted $k$-differential} of type $\mu=(m_1,...,m_n)$ on a stable curve~ $(X,\boldsymbol{z}=(z_1,...,z_n))$ a tuple of differentials $\boldsymbol{\eta}=(\eta_v)_{v}$, where $\eta_v$ is a differential on the irreducible component $X_v$ of $X$ such that the following conditions hold.
     \par
     \begin{enumerate}
	\item If $z_i$ is in $X_v$, then $\ord_{z_i}(\eta_v)=\mu_i$.
	\item If $q_1\in X_{v_1}$ and $q_2 \in X_{v_2}$ are identified as a node of $X$, then \[\ord_{q_1}(\eta_{v_1})+\ord_{q_2}(\eta_{v_2})=-2k. \]
	\item If $k=1$ and at a node $(q_1,q_2)$ the equation $\ord_{q_1}(\eta_{v_1})=\ord_{q_2}(\eta_{v_2})=-1$ holds, then \[\res_{q_1}(\eta_{v_1})+\res_{q_2}(\eta_{v_2})=0.\]
	\item The differentials $\eta_v$ have no poles or zeros away from the nodes and the given marked points.
\end{enumerate}
\end{df}

In this text, a twisted differential simply refers to a twisted $1$-differential. Some crucial information of a twisted differential can be described by an enhanced level graph. An \emph{enhanced level graph} $\Delta$ consists of:
\begin{itemize}
    \item[(i)] a dual graph $\bar{\Delta}$ of $(X,\boldsymbol{z})$;
    \item[(ii)] a level function $\ell:V(\bar{\Delta})\longrightarrow \{0,-1,-2,...\}$ such that $\ell^{-1}(0)$ is nonempty;
    \item[(iii)] for each vertical edge $e$ an assigned positive integer $\kappa_e$ which is called \emph{enhancement}.
    \end{itemize}
We say that a twisted $k$-differential $\boldsymbol{\eta}$ is \emph{compatible} with $\Delta$ if for the order on $V(\bar{\Delta})$ induced by $\ell$:

\begin{itemize}
    \item[(iv)] Given an edge $e$ connecting vertices $v_1,v_2$ in $\Delta$, $\ell(v_1)= \ell(v_2)$ if and only if $\ord_{q_1}(\eta_{v_1})=\ord_{q_2}(\eta_{v_2})= -k$. If $\ell(v_1)> \ell(v_2)$, then $\ord_{q_1}(\eta_{v_1})=\kappa_e-k$ and $\ord_{q_2}(\eta_{v_2})=-\kappa_e-k$.
\end{itemize}

If not specified, by saying that a twisted $1$-differential $\boldsymbol{\eta}$ is \emph{compatible} with $\Delta$, the residues of the poles of $\boldsymbol{\eta}$ should also satisfy the following condition called the \emph{global residue condition} (GRC). Since the GRC for general $k$ is technical and will not be used explicitly in our text, we will only state the GRC for $k=1$ here.
\begin{itemize}
    \item[(v)] For every level $L$, a connected component $Y$, which has no leg representing a marked pole, of the subgraph $\Delta_{> L}$ (which is a graph consisting of edges and vertices above level $L$) will give us a residue condition. Let $e_1,...,e_m$ be edges connecting $Y$ to vertices on level $L$. Then the residue condition induced by $Y$ is 
    \[\sum_{j=1}^m \res_{q_j^-}(\eta_{e_j^-})=0,\]
    where $e_j^-$ is the lower vertex of edge $e_j$ and $q_j^-$ is the point in $X_{e_j^-}$ representing the node.
\end{itemize}
We will abuse the notation for edges so that we write $q$ for both a node in $X$ and also the corresponding edge in the compatible graph.

\begin{rem}\label{rem:twist_can}
Let $D=\sum_i m_ip_i$ be a Weil Divisor on a stable curve $X$, where $p_i$ are nonsingular points on $X$ and $\sum_im_i=k(2g-2)$. We call $D$ a \emph{twisted $k$-canonical divisor} if there is a twisted $k$-differential whose poles and zeros away from the nodal points are exactly prescribed by $D$; in addition, the twisted $k$-differential has to be compatible with some enhanced level structure on the dual graph of $X$ in the sense of condition (iv).
\end{rem}

The rest of the text (except Section~\ref{sec:conj}) will mainly focus on the case $k=1$, thus if not specified, a differential will refer to a $1$-differential.

Let $\boldsymbol{\eta}$ be a twisted differential. Each component $\eta_v$ of the twisted differential $\boldsymbol{\eta}$ endows $X_v$ with a flat structure. Moreover, for two points $q^+,q^-$ in $\widetilde{X}=\bigsqcup_{v\in V(\Delta)} X_v$ corresponding to a \emph{vertical node} $q$ (a node corresponding to a vertical edge), there will be the same number of horizontal geodesics converging to these points. We call these geodesics \emph{prongs}. The horizontal geodesics (of direction to east) which run towards the vertical node are \emph{incoming prongs} and the others are \emph{outgoing prongs}. We denote the set of incoming prongs and the set of outgoing prongs at $x$ by $P^{in}_x$ and $P^{out}_x$ respectively. There is a natural cyclic order on the set of prongs. A \emph{local prong-matching} at node $q$ is a cyclic-order-reversing bijection $\sigma_q:P^{in}_{q^-}\longrightarrow P^{out}_{q^+}$ and a \emph{global prong-matching} is a collection $\boldsymbol{\sigma}$ of local prong-matchings at every vertical node.

Let $X$ be a stable curve; $\boldsymbol{\eta}$ be a twisted $1$-differential compatible with some enhanced level structure on the dual graph $\bar{\Delta}$ of $X$; $\boldsymbol{\sigma}$ be a global prong-matching. There is a natural action of the \emph{level rescaling group} $\mathbb{C}^{L}$ on the pair $(\boldsymbol{\eta},\boldsymbol{\sigma})$ (here $L$ is the number of levels), as now we describe:

\begin{itemize}
    \item[(1)] Let $\boldsymbol{d}=(d_j)_{j=0,-1,...,-L+1}\in \mathbb{C}^{L}$. We define 
    \begin{align*}
        \boldsymbol{d}\cdot\boldsymbol{\eta}= \big(\exp(i2\pi d_{\ell(v)})\cdot\eta_v\big)_{v\in V(\Delta)}.
    \end{align*}
    \item[(2)] For each vertical node $q$, we let 
    \begin{align*}
        \boldsymbol{d}\cdot \sigma_q: P^{in}_{q^-}\longrightarrow P^{out}_{q^+}
    \end{align*}
    be the map $\sigma_q$ precomposed and postcomposed by the rotations by the angles $-2\pi \Real(d_{\ell(q^-)})/\kappa_q$ and $2\pi \Real(d_{\ell(q^+)})/\kappa_q$ respectively, so that $\boldsymbol{d}\cdot \sigma_q$ remains to be a prong-matching.
\end{itemize}

Let 
\begin{align*}
\iota: &\mathbb{C}^{L-1}\hookrightarrow \mathbb{C}^{L}\\
& (d_{-1},...,d_{-L+1})\mapsto (0,d_{-1},...,d_{-L+1})
\end{align*}
be the natural inclusion. We call $\mathbb{C}^{L-1}$ the \emph{reduced level rescaling group}. The action of $\mathbb{C}^{L-1}$ on $(\boldsymbol{\eta},\boldsymbol{\sigma})$ is induced by that of the level rescaling group.

 \begin{df}
       A multi-scale differential is a tuple $(\boldsymbol{\eta},\Delta,\boldsymbol{\sigma})$ consisting of a \emph{twisted $1$-differential} $\boldsymbol{\eta}$ of type $\mu$ on a stable curve $(X,\boldsymbol{z})$ such that $\boldsymbol{\eta}$ is compatible with the enhanced level structure $\Delta$ on the underlying dual graph $\bar{\Delta}$ of $(X,\boldsymbol{z})$. Furthermore, $\boldsymbol{\sigma}$ is a global prong-matching of $\boldsymbol{\eta}$. Two multi-scale differentials are equivalent if they differ by the action of the reduced level rescaling group $\mathbb{C}^{L-1}$. The moduli space of multi-scale differentials of type $\mu$ is denoted by $\Xi\overline{\mathcal{M}}_{\boldsymbol{g,n}}(\mu)$.
       
       On the projectivized space $\mathbb{P}\Xi\overline{\mathcal{M}}_{\boldsymbol{g,n}}(\mu)$, two multi-scale differentials are equivalent if they differ by the action of the level rescaling group $\mathbb{C}^{L}$.
 \end{df}

\begin{rem}
We call the subgroup $\mathbb{Z}^L\subset\mathbb{C}^{L}$ the \emph{level rotation group}. It acts on the twisted differentials trivially while it rotates the prong-matchings. Two multi-scale differentials whose underlying twisted differentials are the same are equivalent if and only if their prong-matchings differ by the action of the level rotation group. Thus, in this text, we are only interested in the equivalence classes of prong-matchings. 
\end{rem}

\subsection{Generalised stratum}
\label{sec:general}
We now introduce the \emph{generalised stratum}. The generalised strata arise naturally if we consider each level of a (connected) enhanced level graph. It is a product of strata of differentials: \begin{align*}
    \Omega\mathcal{M}_{\boldsymbol{g},\boldsymbol{n}}(\boldsymbol{\mu})=\prod_{\nu=1}^k\Omega\mathcal{M}_{g_\nu,n_\nu}(\mu_\nu).
\end{align*}
It parametrizes differentials on disconnected compact Riemann surfaces. Similar to a moduli space of multi-scale differentials which compactifies a stratum of differentials, there is a compactification of a (projectivized) generalised stratum $\mathbb{P}\Omega\mathcal{M} _{\boldsymbol{g},\boldsymbol{n}}(\boldsymbol{\mu})$, which we denote by $\mathbb{P}\Xi\overline{\mathcal{M}} _{\boldsymbol{g},\boldsymbol{n}}(\boldsymbol{\mu}):=\mathbb{P}\big(\prod_{\nu=1}^k\Xi\mathcal{M}_{g_\nu,n_\nu}(\mu_\nu)\big)$. According to \cite{costantini2020chern}, $\mathbb{P}\Omega\mathcal{M} _{\boldsymbol{g},\boldsymbol{n}}(\boldsymbol{\mu})\subset \mathbb{P}\Xi\overline{\mathcal{M}} _{\boldsymbol{g},\boldsymbol{n}}(\boldsymbol{\mu})$ is a compactification with normal crossing boundary divisors. The boundary strata of $\mathbb{P}\Xi\overline{\mathcal{M}} _{\boldsymbol{g},\boldsymbol{n}}(\boldsymbol{\mu})$ are parametrized by (disconnected) enhanced level graphs.

 Moreover, if we consider a level below zero, there are also extra residue conditions induced by the global residue condition. Hence, we also introduce the notion of a \emph{generalised stratum with residue conditions} $\Omega\mathcal{M} ^\mathfrak{R}_{\boldsymbol{g},\boldsymbol{n}}(\boldsymbol{\mu})$. Here, $\mathfrak{R}$ refers to the residue space cut out by a set of residue conditions. Let $H_p$ be the set of pairs $(\nu, i)$ where $i$ is a marking of the component $\nu$ such that the marking $i$ is a pole. A collection $\mathfrak{R}$ of residue conditions consists of a partition $\boldsymbol{\lambda}_\mathfrak{R}$ of $H_p$ with parts $\lambda^{(j)}$. A part $\lambda^{(j)}$ represents a residue equation
\[\sum_{(\nu,i)\in \lambda^{(j)}}r_{\nu,i}=0 \]
Thus, 
\[\mathfrak{R}=\{(r_{\nu,i})_{(\nu,j)\in H_p}: \sum_{(\nu,i)\in \lambda^{(j)}}r_{\nu,i}=0, \lambda^{(j)}\in \boldsymbol{\lambda}_\mathfrak{R}\} \]

\begin{rem}
In analogy to the case of a usual stratum of differentials (i.e. with only one signature), we also call the projection pushforward of the fundamental class of a generalised stratum $p_*\big([\mathbb{P}\Omega\mathcal{M} _{\boldsymbol{g},\boldsymbol{n}}(\boldsymbol{\mu})]\big)\in H^*(\prod_i \mathcal{M}_{g_i,n_i},\mathbb{Q})$   a \emph{stratum class}. If all the signatures of a generalised stratum are of even type, we call such generalised stratum a \emph{generalised stratum of even type}. Given a generalised stratum of even type, we can define the class \[p_*\big([\mathbb{P}\Omega\mathcal{M} _{\boldsymbol{g},\boldsymbol{n}}(\boldsymbol{\mu})]^{\spin}\big)=p_*\big([\mathbb{P}\Omega\mathcal{M} _{\boldsymbol{g},\boldsymbol{n}}(\boldsymbol{\mu})^+]\big)-p_*\big([\mathbb{P}\Omega\mathcal{M} _{\boldsymbol{g},\boldsymbol{n}}(\boldsymbol{\mu})^-]\big)\] and call it a \emph{spin stratum class}.
\end{rem}

\begin{rem}
From now on, for brevity, we will call a (projectivized) moduli space of multi-scale differentials, which compactifies a (projectivized) stratum of differentials, a \emph{compactified stratum}. Similarly, a \emph{compactified generalised stratum} refers to a (projectivized) moduli space of multi-scale differentials of type $\boldsymbol{\mu}$ on disconnected stable curves which compactifies a (projectivized) generalised stratum. 
\end{rem}

\subsection{The spin parities of welded surfaces and plumbed surfaces}\label{sec:spin_welded}
\label{sec:welded}
 In this subsection, we will briefly introduce the welded surface and plumbed surface associated to a multi-scale differential. We refer the readers to \cite{bainbridge2019moduli} for more details. Then we will define the spin parities on them and show why these two spin parities coincide with each other. 
 
Given a multi-scale differential $(\boldsymbol{\eta},\Delta,\boldsymbol{\sigma})$, a \emph{welded surface} is a topological surface $\overline{X}_{\boldsymbol{\sigma}}$ with a map $f:\overline{X}_{\boldsymbol{\sigma}} \longrightarrow X$ such that 
\begin{itemize}
    \item the preimage of a node is a simple loop which we call a \emph{seam};
    \item away from the seams, $f$ is an isomorphism;
    \item the glueing of two boundaries circles of the preimages of two irreducible components of $X$ (to form a seam) is determined by the prong-matchings. 
\end{itemize}
 For instance, the compact closed surface in Figure~\ref{fig:welded_surf} is a welded surface associated to the nodal curve in Figure~\ref{fig:my_label}. Notice that the twisted differential $\boldsymbol{\eta}$ endows $\overline{X}_{\boldsymbol{\sigma}}$ with a piecewise flat structure. From now on, if not specified, the welded surface associated to a multi-scale differential will refer to the topological welded surface plus the piecewise flat structure on it.
 
 Now we will briefly introduce the plumbing surfaces. According to \cite{bainbridge2018compactification}, given a multi-scale differential $(\boldsymbol{\eta},\Delta,\boldsymbol{\sigma})$ such that $X$ is not smooth, one can construct a family of flat surfaces which is controlled by a collection of parameters $t_{-1},...,t_{-L+1}\in\mathbb{C}$ indexed by the levels below zero. A generic fiber is a connected flat surface, while the fiber over $t_{-1}=...=t_{-L+1}=0$ is the disconnected flat surface associated to the twisted differential $\boldsymbol{\eta}$. This deformation family of flat surfaces is called a \emph{plumbing family}. A connected fiber of the deformation family will be called a \emph{plumbed surface}. A plumbed surface is just a flat surface and its spin parity is defined as that of a flat surface. The construction of a plumbed surface is actually quite technical, nevertheless, if we only want to make arguments on the spin parity, it suffices to just consider the welded surface. 

A welded surface $\overline{X}_{\boldsymbol{\sigma}}$ is an orientable closed surface. By considering the piecewise flat structure on the welded surface, one can still calculate the turning number of a smooth simple loop if the curve is transverse to the seams. 

\begin{df}
Let $\overline{X}_{\boldsymbol{\sigma}}$ be a welded surface and $\gamma:[0,1]\longrightarrow \overline{X}_{\boldsymbol{\sigma}}$ be a smooth simple loop such that it is transverse to the seams. Let $0=y_0<y_1<...<y_N=1$ be a partition of the unit interval such that $\gamma\big( (y_j,y_{j+1}) \big)$ does not intersect any seam for $j=0,...,N-1$. The \emph{turning number} of $\gamma$ is defined as 
\begin{align*}
    \ind(\gamma)=\sum_{j=0}^{N-1}\ind(\gamma|_{[y_j,y_{j+1}]})
\end{align*}
Then the \emph{spin parity of a welded surface} is just computed by the Formula~\ref{eq:parity}, where the symplectic basis is chosen for $H_1(\overline{X}_{\boldsymbol{\sigma}},\mathbb{Z})$. In addition, on a disconnected welded surface, the spin parity is just defined to be the sum of the spin parities on the connected components. 
\end{df}

\begin{prop}
Given a multi-scale differential of even type, the spin parity of the welded surface associated to it equals the spin parity of a plumbed surface associated to it.
\end{prop}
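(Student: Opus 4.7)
The strategy is to exhibit symplectic bases on both the welded surface $\overline{X}_{\boldsymbol{\sigma}}$ and a plumbed surface $X_t$ (for small plumbing parameter $t \neq 0$) that correspond under the plumbing construction, and then to compare turning numbers arc by arc, showing that any discrepancy is an even integer.

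First I would pick a symplectic basis $\{a_1,\dots,a_g,b_1,\dots,b_g\}$ of $H_1(\overline{X}_{\boldsymbol{\sigma}},\mathbb{Z})$ realised by smooth simple loops that avoid every conical singularity coming from a zero of $\boldsymbol{\eta}$ and cross each seam transversely in finitely many points. Because the welded surface and any plumbed surface $X_t$ are homeomorphic orientable closed surfaces of the same genus $g$, this basis can be transported to $X_t$ by replacing each short arc crossing a seam with a short arc traversing the corresponding plumbing cylinder transversely, without altering the homology class or the symplectic intersection data. Each basis loop $\gamma$ then decomposes into arcs lying in the flat components of $\overline{X}_{\boldsymbol{\sigma}}$ together with short arcs crossing seams; the corresponding loop $\gamma_t$ on $X_t$ decomposes into the same flat-component arcs (since the plumbing is the identity outside small neighbourhoods of the nodes) together with short arcs traversing the plumbing cylinders. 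The flat-component arcs contribute identical turning numbers on both sides because the underlying flat structure is literally the same there.

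The core of the argument is then the local comparison at each vertical node. For a node of enhancement $\kappa_e$, the welded contribution of a transverse arc is the turning angle dictated by the discontinuous identification across the seam prescribed by the prong-matching $\sigma_q$, while the plumbed contribution is the turning angle accumulated by traversing the corresponding plumbing cylinder in $X_t$. Using the explicit local model from \cite{bainbridge2018compactification}, where the two sides of the node are glued in flat coordinates $zw = t^{\kappa_e}$ with angular identification prescribed by $\sigma_q$, one checks that these two turning angles differ by an integer multiple of $2\pi$. The even-type hypothesis ensures that turning numbers are well-defined modulo $2$ under such deformations, so this integer jump drops out.

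Combining over all seams/cylinders, $\ind(a_i) \equiv \ind(a_{i,t}) \pmod{2}$ and $\ind(b_i) \equiv \ind(b_{i,t}) \pmod{2}$ for every $i$, so the Arf-type formula \eqref{eq:parity} evaluates to the same class in $\mathbb{Z}/2\mathbb{Z}$ on both surfaces. The main obstacle will be step three, the explicit computation of the turning-angle jump through a plumbing cylinder: one must carefully track the trivialisation of the flat ``north direction'' through the cylindrical coordinates of \cite{bainbridge2018compactification} and verify that the total twist matches the cyclic-order-reversing bijection $\sigma_q$ up to $2\pi\mathbb{Z}$, for each possible enhancement $\kappa_e$ (both odd and even).
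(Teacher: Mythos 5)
Your overall strategy (match symplectic bases on $\overline{X}_{\boldsymbol{\sigma}}$ and $X_t$ and compare turning numbers arc by arc) is the same as the paper's, just run in the opposite direction (welded $\to$ plumbed rather than plumbed $\to$ welded). The issue is in the crucial local step, and it is a genuine gap, not just a matter of omitted detail.

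You assert that the turning angle of a transversal arc on the welded surface and the turning angle of the corresponding arc through the plumbing cylinder ``differ by an integer multiple of $2\pi$,'' and then conclude that the discrepancy ``drops out'' modulo $2$. This does not follow: a discrepancy of $2\pi m$ in the accumulated turning angle of the transversal arc translates into an integer shift $m$ of the turning number of the closed loop, and an odd $m$ would flip the parity of $\ind(\gamma)+1$ and potentially of the whole Arf sum. The phrase ``the even-type hypothesis ensures that turning numbers are well-defined modulo $2$ under such deformations'' does not repair this: the even-type hypothesis controls the effect of deforming a loop \emph{within a fixed flat surface} across conical singularities of even cone angle; it says nothing about comparing a seam-crossing arc on one surface to a cylinder-traversing arc on a different one. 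For the proof to close, the local computation must give that the discrepancy is $0$ (or at least an even multiple of $2\pi$), for every enhancement $\kappa_e$, even and odd alike. You acknowledge this at the very end as ``the main obstacle'' and in your opening line as ``an even integer,'' but the middle of the argument silently downgrades the target to ``some integer,'' and nothing in the sketch supports promoting it back to an even one.

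The paper's proof, terse as it is, is pitched at exactly the stronger conclusion you need: it observes that the plumbed flat picture agrees with the welded flat picture away from some deleted strips and disc neighbourhoods, that a closed curve on the plumbed surface completes to a closed curve on the welded surface transverse to the seams, and that this completion leaves the turning number \emph{unchanged} — not merely unchanged modulo $2$. If you carry out the local model computation in $zw=t^{\kappa_e}$ you should aim to show equality of turning numbers for the corresponding arcs, rather than equality modulo $2\pi\mathbb{Z}$ followed by an unjustified reduction modulo $2$.
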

\begin{proof}[Sketch of proof]
 Note that the flat picture of a plumbed surface differs from a welded surface by deleting some strips and disc neighbourhoods (cf. \cite{bainbridge2018compactification} for the flat picture of a plumbed surface). For every simple closed curve on the plumbed surface, one can complete it to a simple closed curve on the welded surface which is transverse to the seams. The turning number of the completed curve will be the same as that of the original curve. As a result, a symplectic basis on the plumbed surface induces a symplectic basis on the welded surface such that the turning numbers remain unchanged.
\end{proof}

By the above proposition, we can determine the parity of a multi-scale differential of even type by computing (\ref{eq:parity}) on the welded surface with respect to simple closed curves transverse to seams.

\section{Prong-matching and spin parity of a welded surface} 
 In this section, we construct a basis of homology on the welded surface associated to a multi-scale differential. This basis is convenient for computing the spin parity and we call such basis a $\Delta$-adapted symplectic basis. The idea comes from the paper \cite{benirschke2020boundary}, where Benirschke constructed a $\Delta$-adapted basis of the relative homology $\mathrm{H}_1(\overline{X}_{\boldsymbol{\sigma}}\setminus P,Z)$, namely a basis comprised of generators coming from each irreducible component of $X$ and generators capturing the homology of the enhanced level graph $\Delta$. Here, $\overline{X}_{\boldsymbol{\sigma}}$ is a welded surface compatible with the enhanced level graph $\Delta$; $P$ and $Z$ are the sets of poles and zeros corresponding to the legs of $\Delta$ respectively. In our text, we need a symplectic basis to compute the spin parity. That is why we cannot directly adopt the $\Delta$-adapted basis constructed in \cite{benirschke2020boundary}. More precisely, we have to ensure that the choice of generating cycles of the graph can be completed to a symplectic basis on the welded surface. On the other hand, the idea behind our definition is very similar to the definition in \cite{benirschke2020boundary}: we both want to construct a basis consisting of cycles from lifting of the bases of homology on various components, and cycles that cross the seams. 
\label{sec:prong}
\subsection{$\Delta$-adapted symplectic basis on a welded surface}\label{sec:sym_basis} 

 Let $\overline{X}_{\boldsymbol{\sigma}}$ be a welded surface compatible with a level graph $\Delta$. A cycle $\gamma$ on $\overline{X}_{\boldsymbol{\sigma}}$ is called a \emph{graph cycle} if it can be realized as a simple loop $\gamma$ such that the geometric intersection numbers $\langle\gamma,\delta\rangle \leq 1$ for each seam $\delta$ but not all the intersection numbers are zero. A cycle $\delta$ on $\overline{X}_{\boldsymbol{\sigma}}$ is called a \emph{vanishing cycle} if it is the homology class of some seam. A cycle which can be realised by a simple loop not intersecting any seam is called a \emph{non-crossing cycle}.

\begin{exa}\label{prop:graphical}

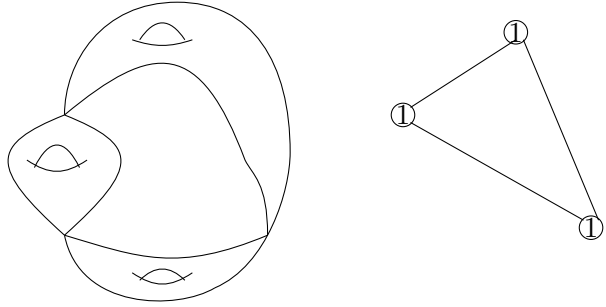
\begin{figure}
    \centering
     \begin{tikzpicture}
   \draw (0,0).. controls (0,0.7) and (0.5,1.5).. (1.5,1.5).. controls (2.5,1.5) and (3,0.7).. (3,-0.5).. controls (3,-1) and (2.7,-2.0).. (2,-2.3);
   \draw (0,0).. controls (1.2,1) and (1.8,1).. (2.4,-0.6).. controls (2.5,-0.8) and (2.7,-0.9)..(2.7,-1.6);
   \draw (0,0).. controls (-1,-0.4) and (-1,-0.7).. (0,-1.6);
   \draw (0,0).. controls (1,-0.4) and (1,-0.7).. (0,-1.6);
   \draw (0,-1.6).. controls (1,-1.9) and (1.5,-2.1).. (2.7,-1.6);
    \draw (0,-1.6).. controls (0.2,-2.6) and (1.5,-2.6).. (2,-2.3);
    \draw (1.0,1.0) .. controls (1.2,1.3) and (1.4,1.3) .. (1.6,1.0) ;
    \draw (0.9,1.0) .. controls (1.2,0.9) and (1.4,0.9) .. (1.7,1.0) ;
     \draw (-0.4,-0.7) .. controls (-0.2,-0.3) and (0,-0.3) .. (0.2,-0.7) ;
    \draw (-0.5,-0.6) .. controls (-0.2,-0.8) and (0,-0.8) .. (0.3,-0.6) ;
    \draw (1.0,-2.2) .. controls (1.2,-2) and (1.4,-2) .. (1.6,-2.2) ;
    \draw (0.9,-2.1) .. controls (1.2,-2.3) and (1.4,-2.3) .. (1.7,-2.1) ;
    
    \node at (6,1.1) [circle,draw,inner sep=0pt,minimum size=5pt]{1};
     \node at (4.5,0) [circle,draw,inner sep=0pt,minimum size=5pt]{1};
     \node at (7,-1.5) [circle,draw,inner sep=0pt,minimum size=5pt]{1};
     \draw (6,1.0)-- (4.6,0.1);
     \draw (4.6,-0.1)-- (6.9,-1.4);
     \draw (7.1,-1.4)-- (6.1,1.0);
   \end{tikzpicture}
    \caption{A nodal curve and its dual graph}
    \label{fig:my_label}
\end{figure}

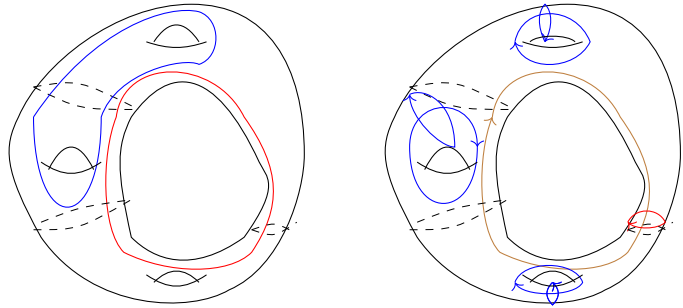
\begin{figure} 
    \centering
   \begin{tikzpicture}
   \draw (-0.8,0).. controls (-0.5,0.7) and (0.5,1.5).. (1.5,1.5).. controls (2.5,1.5) and (3,0.7).. (3,-0.5).. controls (3,-1) and (2.7,-2.0).. (2,-2.3).. controls (1.5,-2.6) and (0.2,-2.6)..  (-0.5,-1.6).. controls (-1,-0.7) and (-1,-0.4) .. (-0.8,0);
   \draw (0.7,0).. controls (1.2,0.7) and (1.8,0.7).. (2.4,-0.6).. controls (2.5,-0.8) and (2.7,-0.9)..(2.2,-1.6)..   controls (1.5,-2.1) and (1,-1.9) .. (0.7,-1.6).. controls (0.5,-0.7) and (0.5,-0.4).. (0.7,0) ;
  
   \draw [red] (0.5,0).. controls (0.6,0.8) and (1.8,0.8) .. (2.2,0).. controls (2.7,-0.7) and (2.7,-1.2).. (2.3,-1.8).. controls (2.0,-2.1) and (1.2,-2.1).. (0.6,-1.8).. controls (0.3,-1.5) and (0.3,-0.5).. (0.5,0) ;
   
   \draw [dashed,very thin] (-0.6,0.4).. controls (0,0.5).. (0.8,0.1);
   \draw [dashed,very thin] (-0.6,0.4).. controls (0,0.1).. (0.8,0.1);
   \draw [dashed,very thin] (-0.5,-1.4).. controls (0,-1.2).. (0.7,-1.1);
    \draw [dashed,very thin] (-0.6,-1.5).. controls (0,-1.5).. (0.7,-1.1);
    \draw [dashed,very thin] (2.3,-1.5).. controls (2.6,-1.6).. (2.9,-1.4);
    \draw [dashed,very thin] (2.3,-1.5).. controls (2.6,-1.4).. (2.8,-1.5);
    \draw [blue] (-0.6,0).. controls (0.2,1.2) and (1.2, 1.5).. (1.6,1.4).. controls (1.9,1.3) and (1.9,0.8).. (1.6,0.7).. controls  (1.4, 0.8) and (0.5,0.6)..(0.3,0).. controls (0.3,-1.6) and (-0.6,-1.6).. (-0.6,0);
    \draw (1.0,1.0) .. controls (1.2,1.3) and (1.4,1.3) .. (1.6,1.0) ;
    \draw (0.9,1.0) .. controls (1.2,0.9) and (1.4,0.9) .. (1.7,1.0) ;
     \draw (-0.4,-0.7) .. controls (-0.2,-0.3) and (0,-0.3) .. (0.2,-0.7) ;
    \draw (-0.5,-0.6) .. controls (-0.2,-0.8) and (0,-0.8) .. (0.3,-0.6) ;
    \draw (1.0,-2.2) .. controls (1.2,-2) and (1.4,-2) .. (1.6,-2.2) ;
    \draw (0.9,-2.1) .. controls (1.2,-2.3) and (1.4,-2.3) .. (1.7,-2.1) ;
    \begin{scope}[xshift=5cm]
        \draw (-0.8,0).. controls (-0.5,0.7) and (0.5,1.5).. (1.5,1.5).. controls (2.5,1.5) and (3,0.7).. (3,-0.5).. controls (3,-1) and (2.7,-2.0).. (2,-2.3).. controls (1.5,-2.6) and (0.2,-2.6)..  (-0.5,-1.6).. controls (-1,-0.7) and (-1,-0.4) .. (-0.8,0);
   \draw (0.7,0).. controls (1.2,0.7) and (1.8,0.7).. (2.4,-0.6).. controls (2.5,-0.8) and (2.7,-0.9)..(2.2,-1.6)..   controls (1.5,-2.1) and (1,-1.9) .. (0.7,-1.6).. controls (0.5,-0.7) and (0.5,-0.4).. (0.7,0) ;
  
   \draw [->,brown] (0.5,0).. controls (0.6,0.8) and (1.8,0.8) .. (2.2,0).. controls (2.7,-0.7) and (2.7,-1.2).. (2.3,-1.8).. controls (2.0,-2.1) and (1.2,-2.1).. (0.6,-1.8).. controls (0.3,-1.5) and (0.3,-0.5).. (0.5,0) ;
    \draw [->,red] (2.3,-1.4).. controls (2.4,-1.2) and (2.7,-1.2) .. (2.8,-1.4).. controls (2.7,-1.5) and (2.4,-1.5).. (2.3,-1.4);
   
   \draw [dashed,very thin] (-0.6,0.4).. controls (0,0.5).. (0.8,0.1);
   \draw [dashed,very thin] (-0.6,0.4).. controls (0,0.1).. (0.8,0.1);
   \draw [dashed,very thin] (-0.5,-1.4).. controls (0,-1.2).. (0.7,-1.1);
    \draw [dashed,very thin] (-0.6,-1.5).. controls (0,-1.5).. (0.7,-1.1);
    \draw [dashed,very thin] (2.3,-1.5).. controls (2.6,-1.6).. (2.9,-1.4);
    \draw [dashed,very thin] (2.3,-1.5).. controls (2.6,-1.4).. (2.8,-1.5);
    
    \draw [->,blue] (0.3,-0.4).. controls (0.3,-1.4) and (-0.6,-1.4).. (-0.6,-0.4).. controls (-0.6,0.3) and (0.3,0.3).. (0.3,-0.4);
    \draw [->,blue] (0.8,1.0).. controls (0.9,1.5) and (1.6,1.5).. (1.8,1.0).. controls (1.6,0.6) and (0.9,0.6).. (0.8,1.0);
    \draw [->,blue] (0.8,-2.2).. controls (0.9,-1.9) and (1.6,-1.9).. (1.7,-2.2).. controls (1.6,-2.4) and (0.9,-2.4).. (0.8,-2.2);
    \draw [->,blue] (1.2,1.0).. controls (1.1,1.2) and (1.1,1.4).. (1.2,1.5).. controls (1.3,1.4) and (1.3,1.2)..(1.2,1.0);
    \draw [->,blue] (-0.6,0.3).. controls (-0.4,0.4) and (0,0.2).. (0.0,-0.4).. controls (-0.2,-0.4) and (-0.6,0).. (-0.6,0.3);
    \draw [->,blue] (1.3,-2.2).. controls (1.2,-2.3) and (1.2,-2.4).. (1.3,-2.5).. controls (1.4,-2.4) and (1.4,-2.3).. (1.3,-2.2);
    \draw [->,blue] (1.3,-2.2).. controls (1.2,-2.3) and (1.2,-2.4).. (1.3,-2.5).. controls (1.4,-2.4) and (1.4,-2.3).. (1.3,-2.2);
   
    \draw (1.0,1.0) .. controls (1.1,1.1) and (1.5,1.1) .. (1.6,1.0) ;
    \draw (0.9,1.0) .. controls (1.2,0.9) and (1.4,0.9) .. (1.7,1.0) ;
     \draw (-0.4,-0.7) .. controls (-0.2,-0.3) and (0,-0.3) .. (0.2,-0.7) ;
    \draw (-0.5,-0.6) .. controls (-0.2,-0.8) and (0,-0.8) .. (0.3,-0.6) ;
    \draw (1.0,-2.2) .. controls (1.2,-2) and (1.4,-2) .. (1.6,-2.2) ;
    \draw (0.9,-2.1) .. controls (1.2,-2.3) and (1.4,-2.3) .. (1.7,-2.1) ;
    \end{scope}
\end{tikzpicture}
\caption{The associated welded surface and a $\Delta$-adapted symplectic basis} \label{fig:welded_surf}
\end{figure}
 
Consider the nodal curve in Figure~\ref{fig:my_label}, on the associated welded surface (Figure~\ref{fig:welded_surf} left) we have a blue loop $\gamma_b$ and a red loop $\gamma_r$. We can see that there is a seam such that the geometric intersection number with $\gamma_b$ is $2$. However, $\gamma_r$ intersects all the seams only once. Thus $\gamma_r$ is a graph cycle while $\gamma_b$ is not. 
\end{exa}

Note that a graph cycle $\gamma$ induces a loop on $\Delta$ which consists of edges whose corresponding seam intersects with $\gamma$.

\begin{df}
 Let $\overline{X}_{\boldsymbol{\sigma}}$ be a welded surface which is compatible with a level graph~$\Delta$. A \emph{ $\Delta$-adapted symplectic basis} on a welded surface $\overline{X}_{\boldsymbol{\sigma}}$ is a symplectic basis such that it contains a set of graph cycles whose associated loops on $\Delta$ form a basis of~$H_1(\Delta,\mathbb{Z})$ and moreover, remaining cycles are either non-crossing cycles or vanishing cycles dual to the graph cycles. 
\end{df}


\begin{exa}
Let us consider the welded surface in Example~\ref{prop:graphical}. The blue cycles in Figure~\ref{fig:welded_surf} right are non-crossing cycles and the brown one is a graph cycle and the red one is a vanishing cycle. They constitute a $\Delta$-adapted symplectic basis on the welded surface.
\end{exa}

\subsection{Algorithm to construct a $\Delta$-adapted symplectic basis}
 
This subsection is to prove the existence of a $\Delta$-adapted symplectic basis. Notice that we are arguing just by topological properties of a nodal curve and its dual graph. Thus, for every topological welded surface, our claim of the existence of a $\Delta$-adapted symplectic basis still holds. To construct a $\Delta$-adapted symplectic basis on the welded surface $\overline{X}_{\boldsymbol{\sigma}}$, it is equivalent to finding a basis of~$\mathrm{H}_1(\Delta)$. However, not every spanning tree can give us a set of fundamental cycles that can be completed to a $\Delta$-adapted symplectic basis. Namely, the simple closed curves on the welded surface, which realize the fundamental cycles of the graph, cannot be chosen to be disjoint. The following example will illustrate the problem.
 
 \begin{exa}
Consider the following graph, where the black edges constitute a spanning tree $T_0$ and every coloured edge gives rise to a fundamental cycle

\begin{center}
    \begin{tikzpicture}
    \node at (0,0) [circle,draw,fill,inner sep=0pt,minimum size=3pt, label=above:$d$]{};
    \node at (0,-1) [circle,draw,fill,inner sep=0pt,minimum size=3pt,label=right:$a$]{};
    \node at (-2,-1.5) [circle,draw,fill,inner sep=0pt,minimum size=3pt,label=above:$b$]{};
    \node at (2,-1.5) [circle,draw,fill,inner sep=0pt,minimum size=3pt,label=above:$c$]{};
    \node at (-1.5,-3) [circle,draw,fill,inner sep=0pt,minimum size=3pt,label=below:$v^*$]{};
    \node at (1.5,-3) [circle,draw,fill,inner sep=0pt,minimum size=3pt,label=below:$e$]{};
    \node at (-3,-1.2) {$q_1$};
    \node at (-0.5,-0.7) {$q_2$};
    \node at (0.2,-0.7) {$q_3$};
    \node at (0.7,-0.7) {$q_4$};
    \node at (3,-1.2) {$q_5$};
    \node at (-1.8,-2.5) {};
    \node at (-1.2,-2.0) {};
    \node at (-1.0,-1.6) {};
    \node at (-1.0,-1.1) {};
    \node at (-1.1,-2.4) {};
    \node at (-0.5,-2.6) {};
    \node at (-0.2,-3.2) {};
    \node at (1.2,-2.5) {};
    \node at (1.9,-2.3) {};
    \node at (1.0,-1.1) {};
    \draw (0,0) -- (0,-1);
    \draw (0,0) -- (-2,-1.5);
     \draw (0,0) -- (2,-1.5);
     \draw (0,0).. controls(-4,-2).. (-1.5,-3);
     \draw (0,0).. controls(4,-2).. (1.5,-3);
     \draw [red](0,-1) -- (-2,-1.5);
      \draw [blue](-2,-1.5) -- (-1.5,-3);
      \draw [orange](-1.5,-3) -- (1.5,-3);
      \draw [yellow](1.5,-3) -- (2,-1.5);
     \draw [green](2,-1.5) -- (0,-1);
     \draw [violet](-2,-1.5) -- (1.5,-3);
     \draw [brown](-2,-1.5) -- (2,-1.5);
     \draw [purple](0,-1) -- (-1.5,-3);
     \draw [pink](0,-1) -- (1.5,-3);
     \draw [green!50!black](-1.5,-3) -- (2,-1.5);
    \end{tikzpicture}
\end{center}
Five edges are connected to $d$ and any two of them share a fundamental cycle induced by~$T_0$. Hence, if we want to realize the graph cycles as some closed curves, any two nodal marked points in $C_d$ will be connected by a segment. Then there will be some segments that intersect with each other because the 5 nodal marked points and the segments connecting them form a $K_5$-graph, which is not planar. 

\end{exa}
 
  We now show how this problem can be circumvented. Suppose we are given a spanning tree $T_0$ of a graph $\Delta$, we can apply the following iteration to obtain a tree $T$. This tree will induce a collection of graph cycles such that on each $C_v$ the nodal marked points and the segments connecting them form a planar graph. The main idea is to reduce the number of edges in a spanning tree adjacent to a vertex.
\begin{enumerate}
    \item Take an initial vertex $v^*$ such that it is only connected to one edge in $T_0$. The vertex $v^*$ is fixed throughout the iteration. On $T_k$, for every vertex $v$ except $v^*$, there is exactly one edge adjacent to $v$ going towards $v^*$, which we denote as $e^{(k)}_v$ and others are all going away from $v^*$.
    \item Let $\{q^{(k)}_1,...,q^{(k)}_{g'}\}$ be edges of $\Delta$ not included in $T_k$ and $c_{q^{(k)}_i}$ the fundamental cycles formed by adding $q^{(k)}_i$ in the tree. Let $w$ be a vertex of distance $k+1$ to~$v^*$ in $T_{k}$. For any two edges adjacent to $w$, which are not $e^{(k)}_w$ and share a fundamental cycle $c_{q^{(k)}_i}$, we replace one of these two edges by $q^{(k)}_i$. The edge $q^{(k)}_i$ should be connected only to vertices that are of distance greater equal $k+1$ to $v^*$. We carry out this operation until all the vertices of distance $k+1$ to $v^*$ in $T_{k}$ have no going-away edges sharing a fundamental cycle. The resulting tree will be set to be $T_{k+1}$.
    \item Notice that the set of vertices of distance $k'$ to $v^*$ in $T_k$, where $k'\leq k$, will be also the set of vertices of distance $k'$ to $v^*$ in $T_{k+1}$. Thus our operation in (2) will not change the distances we fixed in earlier stages. We terminate the iteration when we have exhausted all the vertices.
\end{enumerate}
 Let $\Sigma_v$ be the graph whose vertices are the nodal marked points on $C_v$, while its edges are segments connecting them. Now we show that the final spanning tree $T$ from the iteration above will give us on each $C_v$ a graph $\Sigma_v$ that is planar. First, notice that there are 3 types of vertices in $\Sigma_v$ which represent: 
\begin{itemize}
    \item[(i)] a node which is represented by an edge adjacent to vertex $v$ in the tree $T$ towards the initial vertex $v^*$,
    \item[(ii)] a node which is represented by an edge adjacent to vertex $v$ in the tree $T$ away $v^*$ and 
    \item[(iii)] a node which is represented by an edge, not in the tree $T$
\end{itemize}
If the $\Sigma_v$ is induced by the result spanning tree $T$ from the iteration, then the vertices of the same type are not connected. There is only one vertex of type (i) in $\Sigma_v$ and every vertex of type (iii) in $\Sigma_v$ has only one edge to some vertex of type (ii). Moreover, the unique vertex of type (i) is only connected to vertices of type (ii). Thus, any cycle in $\Sigma_v$ can only consist of multiple edges between the unique vertex of type (i) and a vertex of type (ii). This kind of $\Sigma_v$ is always planar. Indeed, if we regard the multiple edges between the unique vertex of type (i) and a vertex of type (ii) as one edge, the resulting graph $\Sigma_v'$ will be a tree. As adding an extra edge between two vertices will also add an extra face, the Euler characteristic will not change. Hence, the graph $\Sigma_v$ induced by the resulting tree $T$ is planar. Hence, we can now assert the existence of a $\Delta$-adapted symplectic basis on a welded surface. Finally, we can present our proof of Proposition~\ref{prop:adapt}. 

\begin{prop}\label{prop:adapt}
Let $\overline{X}_{\boldsymbol{\sigma}}$ be a welded surface compatible with an enhanced level graph~$\Delta$. Then there exists a $\Delta$-adapted symplectic basis on $\overline{X}_{\boldsymbol{\sigma}}$. Moreover, if $\Delta$ has an edge of even enhancements, then the basis can be so chosen that it contains a vanishing cycle corresponding to an edge of even enhancement. 
\end{prop}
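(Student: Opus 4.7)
The plan is to execute the iterative spanning tree construction just described and then realize its output geometrically as the required symplectic basis. First I would run the iteration starting from an arbitrary initial spanning tree $T_0$ of $\Delta$ to produce a spanning tree $T$ with the property that, for every vertex $v$, the auxiliary graph $\Sigma_v$ recording the nodal incidences at $C_v$ is planar. Denote the non-tree edges by $q_1,\ldots,q_{g'}$, where $g'$ is the number of independent cycles in $\Delta$, and let $c_{q_i}$ be the fundamental cycle in $\Delta$ associated to $q_i$.

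Next I would realize each $c_{q_i}$ as a simple closed loop $\gamma_i$ on $\overline{X}_{\boldsymbol{\sigma}}$, chosen so that the resulting loops are pairwise disjoint. On each component $C_v$ the planarity of $\Sigma_v$ lets me embed all the required segments joining the relevant pairs of nodal marked points simultaneously as disjoint arcs in $C_v$, and concatenating these arcs across the seams yields the loops $\gamma_i$. By construction $\gamma_i$ crosses the seam of $q_i$ exactly once, is disjoint from the seams of the other non-tree edges, and is disjoint from $\gamma_j$ for $j \neq i$. Taking the seam $\delta_i$ corresponding to $q_i$ as the vanishing dual of $\gamma_i$, I obtain $i(\gamma_i,\delta_j) = \pm \delta_{ij}$ and $i(\delta_i,\delta_j)=0$, so the pairs $\{(\gamma_i,\delta_i)\}$ form $g'$ symplectic pairs of the types required by the definition of a $\Delta$-adapted symplectic basis.

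To complete this to a symplectic basis of $H_1(\overline{X}_{\boldsymbol{\sigma}},\mathbb{Z})$, on each component $X_v$ I would pick a symplectic basis of $H_1(X_v,\mathbb{Z})$ and realize its elements by loops inside $C_v$ that avoid both the seams and the arcs used in the previous step; these contribute $2\sum_v g_v$ non-crossing cycles orthogonal (for intersection purposes) to every $\gamma_i$ and $\delta_j$. A rank count then gives $2g' + 2\sum_v g_v = 2g$, confirming that we have a symplectic basis, and it is $\Delta$-adapted by construction. The main technical burden is the disjoint-arc realization inside each $C_v$: one must show that a planar $\Sigma_v$ can be embedded in the bordered surface $C_v$ with vertex positions prescribed at the nodal marked points. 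This is the step I would spend most care on, either by induction on the combinatorial complexity of $\Sigma_v$ or by appealing to a standard planar embedding result applied after cutting $C_v$ open near the nodes.

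For the moreover part, given an edge $e$ of $\Delta$ with even enhancement, I would initialise the iteration with a spanning tree $T_0$ that does not contain $e$. Since at each replacement step one is free to choose which of two non-tree edges sharing a fundamental cycle is exchanged, one can always avoid swapping $e$ into the tree. Then $e$ survives as a non-tree edge of the final $T$, and the corresponding seam appears as a vanishing cycle in the $\Delta$-adapted symplectic basis produced above.
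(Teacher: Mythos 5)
Your argument for the existence part follows the paper's approach: run the spanning-tree iteration to make each $\Sigma_v$ embeddable, realize the fundamental cycles as disjoint loops crossing each seam at most once, take seams as the dual vanishing cycles, and complete with non-crossing cycles from symplectic bases of the components. One cautionary note: bare planarity of $\Sigma_v$ is not by itself sufficient to get the disjoint-arc realization in a bordered surface with the vertex positions prescribed on boundary circles; what the iteration actually produces, and what the paper's argument uses, is the stronger structural property that $\Sigma_v$ is a tree together with possibly multiple parallel edges incident to the unique type-(i) vertex, and it is that structure, not planarity per se, that makes the disjoint embedding immediate.

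The ``moreover'' part has a real gap. First, you initialise with a spanning tree $T_0$ not containing $e$, but such a tree exists only if $e$ is not a bridge; you do not show that an edge of even enhancement is non-separating. The paper proves this via a parity argument: on each component the differential's divisor has even degree, the orders at marked points are even (since $\mu$ is of even type), and the order at a node is $\pm\kappa_e - 1$, so it is odd exactly when $\kappa_e$ is even; hence each vertex meets an even number of even-enhancement edges, and after contracting all odd-enhancement edges one gets a graph with all vertex degrees even, in which no edge can be a bridge. Second, and more seriously, your claim that ``one is free to choose which of two non-tree edges sharing a fundamental cycle is exchanged'' misreads the iteration: the trigger is two \emph{tree} edges at $w$ lying on the same fundamental cycle $c_q$, and the replacement step necessarily inserts the non-tree edge $q$ into the tree, with the only freedom being which of the two tree edges is removed. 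If at some stage two going-away tree edges at $w$ share $c_e$ and no other fundamental cycle, the algorithm forces $e$ into the tree, and your strategy breaks down. The paper avoids this cleanly by running the algorithm on $\Delta' = \Delta \setminus e$ (legitimate once $e$ is known to be non-separating), obtaining a spanning tree $T$ of $\Delta'$, which is automatically a spanning tree of $\Delta$ not containing $e$; then $e$ is an extra non-tree edge whose seam furnishes the desired vanishing cycle.
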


\begin{proof}
 Notice that every vertex of $\Delta$ can only be adjacent to an even number of edges of even enhancements, and hence by contracting all the edges of odd enhancements, we will get a graph whose vertices are of even degree. It is well-known that every edge of such a graph is non-separating. Thus, every edge of even enhancement in a level graph is non-separating. Let $\Delta'$ be the graph obtained by removing the chosen edge. We can apply our algorithm to generate a spanning tree $T$ for a $\Delta'$-adapted symplectic basis. It is easy to see that we can construct a $\Delta$-adapted symplectic basis by using the spanning tree $T$ of $\Delta'$.
\end{proof}

\subsection{How a prong-matching rotation changes the spin parity}\label{sec:prong_rotation} In this subsection, we will describe how the spin parity changes when we change the prong-matching at a node.

  The finite group $P_\Gamma=\prod_{e\in E(\Delta)}\mathbb{Z}/\kappa_e\mathbb{Z}$ is called the \emph{prong rotation group}. Now let $r_q=(0,..,0,l,0,...0)$ (only the component for $e=q$ is $l$), where $\kappa_q>l>0$, be a representative of an element in the prong rotation group. Then it acts on a global prong-matching $\boldsymbol{\sigma}$ by altering the local prong-matching $\sigma_q$. It precomposes the cyclic-order-reversing bijection $\sigma_q:P^{in}_{q^-}\longrightarrow P^{out}_{q^+}$ with a rotation by $\frac{2\pi l}{\kappa_q}$ (in the ascending direction with respect to the cyclic order on $P^{in}_{q^-}$ ). 
  
  We denote the new welded surface obtained by applying $r_q$ on the prong-matchings be $\overline{X}_{r_q\boldsymbol{\sigma}}$. Let $\gamma$ on $\overline{X}_{\boldsymbol{\sigma}}$ be a loop representing a graph cycle passing through the seam corresponding to $q$. After we apply the prong rotation $r_q$, $\gamma$ will be a broken loop on~$\overline{X}_{r_q\boldsymbol{\sigma}}$. By connecting the two ends of the broken loop by an arc of the seam cycle of $q$ we will get back a loop $r_q\gamma$ on $\overline{X}_{r_q\boldsymbol{\sigma}}$. By smoothing the loop, one can easily see that~$\ind(\gamma)-\ind(r_q\gamma)=l\pmod 2$. 

Another easy observation is that if $q$ is of odd enhancement, the vanishing cycle of~$q$ will have odd turning number. Hence by expression~(\ref{eq:parity}), a symplectic pair which includes the vanishing cycle associated to $q$ will contribute $0$ to the spin parity, no matter which prong-matching is chosen. 

Now we have the preparatory material to present the proof of Proposition~\ref{prop:1.1}.

 \begin{proof}[Proof of Proposition~\ref{prop:1.1}]
 If there is an edge of even enhancement, then by Proposition~\ref{prop:adapt}, we can choose a symplectic basis such that it contains a vanishing cycle $\alpha$ corresponding to an edge of even enhancement. Let $\beta$ be the dual of $\alpha$ in the symplectic basis. By a generating local rotation at that edge, the turning number of $\beta$ will increase by $1$, while the turning numbers of other cycles in the symplectic basis remain unchanged. Hence, half of the prong-matchings give us an even spin and the other half of the pair of prong-matchings give us an odd spin. 
 
 Let $\Delta'$ be the graph obtained by splitting all the edges of odd enhancements in~$\Delta$ into legs. If $h^1(\Delta')=0$ , then there is no graph cycle in the (disconnected) welded surface corresponding to $\Delta'$. Hence, rotating any local prong-matching will not change the turning number
$\pmod 2$ of members in a $\Delta$-adapted symplectic basis, thus the spin parity will remain unchanged. As a result, the spin parity of the welded surface associated to $\Delta'$ is just the same as that of the welded surface associated to $\Delta$.
 
 \end{proof}
 
 \section{Vanishing of the rational cohomology of $\mathcal{M}_{g,2}$ at v.c.d.}
 In this section, we want to prove Theorem~\ref{thm:vanish} which will be one of the input to prove Proposition~\ref{prop:arbarello} on the injectivity of clutching pullbacks.
 Let $\Mod_g$ (resp. $\Mod_{g,n}$) be the mapping class group of an oriented closed genus $g$ surface (resp. with $n$-punctures). It is well-known that 
 \[H^i(\mathcal{M}_{g,n};\mathbb{Q})\simeq H^i(\Mod_{g,n};\mathbb{Q}) \]
 It has been shown by Harer (\cite{harer1986virtual}) that $\Mod_g$  resp. $\Mod_{g,1}$ are \emph{virtual duality group} of virtual cohomological dimension $4g-5$ resp. $4g-3$, i.e. there exists a dualizing module called Steinberg module $\St_g$ (that will be defined later) such that for any torsion-free finite index subgroup $G<\Mod_g$ (resp. $G'<\Mod_{g,1}$) and $\Mod_g$ (resp. $\Mod_{g,1}$) module~$A$, we have
 
 \begin{align*}
     H^i(G,A)&=H_{4g-5-i}(G,\St_g\otimes_\mathbb{Z} A)\\
     H^i(G',A)&=H_{4g-3-i}(G',\St_g\otimes_\mathbb{Z} A),
 \end{align*}
 where $G$ (resp. $G'$) acts on $\St_g\otimes_\mathbb{Z} A$ by diagonal action. If we take rational coefficients, then the virtual duality implies
 \begin{align*}
     H^i(\Mod_{g},A\otimes_\mathbb{Z}\mathbb{Q})&=H_{4g-5-i}(\Mod_g,\St_g\otimes_\mathbb{Z} A\otimes_\mathbb{Z}\mathbb{Q})\\
     H^i(\Mod_{g,1},A\otimes_\mathbb{Z}\mathbb{Q})&=H_{4g-3-i}(\Mod_{g,1},\St_g\otimes_\mathbb{Z} A\otimes_\mathbb{Z}\mathbb{Q}),
 \end{align*}

 In \cite{church2012rational}, it has been proved that the coinvariant $(\St_g)_{\Mod_g}=0$ and this leads to 
 $H^{4g-3}(\mathcal{M}_g;\mathbb{Q})=H^{4g-5}(\mathcal{M}_{g,1};\mathbb{Q})=0$. Our aim in this section is to extend the result for $\mathcal{M}_{g,2}$:
 
 \begin{thm}\label{thm:vanish}
 For any $g\geq 1$, we have $H^{4g-2}(\mathcal{M}_{g,2};\mathbb{Q})=0$.
 \end{thm}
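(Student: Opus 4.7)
The plan is to reduce the vanishing at the v.c.d.\ of $\Mod_{g,2}$ to the Church--Farb--Putman results for $\Mod_g$ and $\Mod_{g,1}$ via a Birman fibration. Harer's theorem gives that the v.c.d.\ of $\Mod_{g,2}$ is $4g-2$, so the claim is that the top rational cohomology vanishes. I consider the forgetful map $\pi\colon\mathcal{M}_{g,2}\to\mathcal{M}_{g,1}$ dropping the second marked point; its fiber is a once-punctured surface $\Sigma_{g,1}$ and it fits into the Birman short exact sequence
\[
1\longrightarrow \pi_1(\Sigma_{g,1})\longrightarrow \Mod_{g,2}\longrightarrow \Mod_{g,1}\longrightarrow 1.
\]
The associated Leray--Serre spectral sequence has $E_2^{p,q}=H^p(\Mod_{g,1};H^q(\pi_1(\Sigma_{g,1});\mathbb{Q}))\Rightarrow H^{p+q}(\mathcal{M}_{g,2};\mathbb{Q})$.

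Since $\pi_1(\Sigma_{g,1})$ is free of rank $2g$, the only nonzero fiber cohomology sits in degrees $0$ and $1$, with $V:=H^1(\pi_1(\Sigma_{g,1});\mathbb{Q})$ equal to the standard symplectic representation of $\Mod_{g,1}$ (whose action factors through $\mathrm{Sp}_{2g}(\mathbb{Z})$). Consequently the only $E_2$-terms of total degree $4g-2$ are $E_2^{4g-2,0}=H^{4g-2}(\Mod_{g,1};\mathbb{Q})$ and $E_2^{4g-3,1}=H^{4g-3}(\Mod_{g,1};V)$. The first vanishes because it lies above the v.c.d.\ $4g-3$ of $\Mod_{g,1}$ (in fact it already vanishes by the Church--Farb--Putman result recalled in the section introduction). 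It therefore suffices to prove $H^{4g-3}(\Mod_{g,1};V)=0$.

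By virtual duality applied to $\Mod_{g,1}$ with the local system $V$, there is a rational isomorphism $H^{4g-3}(\Mod_{g,1};V)\cong (\St_g\otimes V)_{\Mod_{g,1}}$. My plan to kill these coinvariants is to adapt the apartment-killing argument of Church--Farb--Putman in \cite{church2012rational}. Their proof of $(\St_g)_{\Mod_g}=0$ presents $\St_g$ by explicit apartment generators in the curve/arc complex, and kills each apartment class by an identity of the form $a=h\cdot a'$ for some $h\in\Mod_g$ of small geometric support. Tensoring with an element $v\in V$ represented by a simple closed curve class, I expect that after a preliminary conjugation the curve representing $v$ can be arranged to be disjoint from the support of $h$; then $h$ fixes $v$ and the apartment-killing identity survives verbatim in $\St_g\otimes V$. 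Running this for a symplectic basis of $V$ then gives the vanishing.

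The main obstacle will be this last step, namely promoting the Church--Farb--Putman apartment argument to the twisted coefficient system $V$: the disjointness arrangement for a curve representing $v$ must be justified carefully, and it is conceivable that certain apartments force nontrivial intersection with every representative of $v$, in which case additional relations in $\St_g$ will be needed. The base case $g=1$ should be handled separately, since there $\mathcal{M}_{1,2}$ has complex dimension $2$ and its rational cohomology is fully tautological and explicitly computed (e.g.\ in Getzler's work on $\mathcal{M}_{1,n}$), from which $H^2(\mathcal{M}_{1,2};\mathbb{Q})=0$ follows by direct inspection.
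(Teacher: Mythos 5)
Your reduction follows the paper's strategy closely: the Birman short exact sequence, the Lyndon--Hochschild--Serre spectral sequence, the observation that the fiber group is free of rank $2g$, and the reduction via virtual duality to showing $H^{4g-3}(\Mod_{g,1};V)\cong(\St_g\otimes V)_{\Mod_{g,1}}=0$ are all exactly the steps in the paper. The paper also makes the further observation (which you should include) that because the $\Mod_{g,1}$-action on $V$ factors through $\Mod_g$, one can replace $\Mod_{g,1}$-coinvariants by $\Mod_g$-coinvariants; this is a small but useful simplification.

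The genuine gap is in the final step, and you have flagged it yourself. Your plan to kill $(\St_g\otimes V)_{\Mod_g}$ by adapting the Church--Farb--Putman apartment-killing argument, arranging by conjugation that the element $h$ realizing a killing relation has support disjoint from a curve representing $v$, is not carried out and may not work as stated. First, the apartment-killing relations do not take the single form $a=h\cdot a$; they are sums, and for a relation of the form $a=\sum_i\pm(a_i-h_ia_i)$ you would need every $h_i$ simultaneously to fix $v$, which is a much stronger requirement than disjointness for one $h$. Second, a general element of $\St_g\otimes V$ is a sum $\sum_j a_j\otimes v_j$ with the $a_j$ and $v_j$ interacting, so killing each $a\otimes v$ for a fixed symplectic basis vector $v$ is not a priori sufficient to kill all classes unless you argue that such tensors span, which again uses the module structure in a nontrivial way. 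What the paper does instead is entirely concrete: it uses Broaddus' presentation of $\St_g$ by oriented filling systems and chord diagrams, in particular that $\St_g$ is generated by a single explicit filling system $\phi_g$ and that disconnected chord diagrams represent zero. It then computes one cellular boundary $(\partial\otimes 1)(\phi_g'\otimes a_i^\vee)$ in $\mathcal{F}_1\otimes_{\Mod_g}V$ for an augmented filling system $\phi_g'$, obtains the relations $[\phi_g\otimes a_{i+1}^\vee]=-[\phi_g\otimes a_i^\vee]$ in the coinvariants, and telescopes to conclude $(g+1)[\phi_g\otimes x_1^\vee]=0$. This bypasses the disjointness issue entirely and is the missing content in your proposal. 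Your treatment of the base case $g=1$ by direct inspection of $\mathcal{M}_{1,2}$ is fine; the paper handles it via the Eichler--Shimura isomorphism and the absence of odd-weight modular forms, which accomplishes the same thing.
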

 
 \subsection{The Steinberg module $\St_g$}
 In this subsection, we describe the Steinberg module $\St_g$. Let $G$ be a torsion-free finite index subgroup of $\Mod_{g}$ (resp. $\Mod_{g,1}$). Then it acts on the Teichmüller space $\mathcal{T}_g$ (resp. $\mathcal{T}_{g,1}$) freely and properly discontinuously, which implies that the quotient space will be a manifold. Ivanov \cite{ivanov1990attaching} and Harer \cite{harer1986virtual} have constructed some contractible bordification $\overline{\mathcal{T}}$ of the Teichmüller space such that the quotient space is compact. According to Theorem 6.2 of \cite{bieri1973groups}, the dualizing module for $G$ is then the reduced homology $\widetilde{H}_{2g-2}(\partial\overline{\mathcal{T}};\mathbb{Z})$. Ivanov \cite{ivanov1987complexes} (resp. Harer \cite{harer1986virtual}) has proved that 
 $\partial\overline{\mathcal{T}}_g$ (resp. $\partial\overline{\mathcal{T}}_{g,1}$) is homotopy equivalent to the curve complex $\mathcal{C}_g$ (resp. $\mathcal{C}_{g,1}$) constructed by Harvey in \cite{harvey2016boundary}. 
 
 In \cite{harer1986virtual}, Harer showed that $\mathcal{C}_{g,1}$ is $\Mod_{g,1}$-equivariantly homotopy equivalent to $\mathcal{C}_g$. This means that the virtual dualizing modules of $\Mod_g$ and $\Mod_{g,1}$ are the same and the $\Mod_{g,1}$-action on it is just induced by the surjective homomorphism $\Mod_{g,1}\longrightarrow\Mod_g$. In addition, in Harer's construction of the cellular decomposition of the Teichmüller space $\mathcal{T}_{g,1}$, he introduced the arc complex $\mathcal{A}(S_{g,n})$.
 \begin{df}
  Let $S_{g,n}$ be a genus $g$ closed surface with $n$ marked points. Then a $k$-arc system on it is a collection of isotopy classes (relative to the marked points) of $k$ arcs and closed loops $([\alpha_0],[\alpha_1],...,[\alpha_{k-1}])$ such that $\alpha_0,...,\alpha_{k-1}$ intersects only at the marked points. The arc complex $\mathcal{A}(S_{g,n})$ is the simplicial complex whose $k$-simplices are $k+1$-arc systems and the face relations are induced by inclusion.
 \end{df}
 
 An arc system $([\alpha_0],[\alpha_1],...,[\alpha_k])$ is a \emph{filling system} if $S_{g,n}\setminus \cup_i\alpha_i$ is a disjoint union of disks. An arc system is called \emph{oriented} if the arcs are ordered. We denote the closed subcomplex of $\mathcal{A}(S_{g,n})$ consisting of arc systems that do not fill $S_{g,n}$ by $\mathcal{A}_\infty(S_{g,n})$. Harer \cite{harer1986virtual} has shown that $\mathcal{A}_\infty(S_{g,1})$ is homotopy equivalent to $\mathcal{C}_g$. Since $\mathcal{A}(S_{g,1})$ is contractible, we have
 \[\St_g=\widetilde{H}_{2g-2}(\mathcal{A}_\infty(S_{g,1});\mathbb{Z})\simeq H_{2g-1}(\mathcal{A}(S_{g,1})/\mathcal{A}_\infty(S_{g,1});\mathbb{Z}) \]
 
Let $\mathcal{F}_j$ be the free abelian group generated by oriented filling systems which consist of $2g+j$ arcs. Note that a filling system has at least $2g$ arcs and $\mathcal{F}_\bullet$ is just the cellular chain complex $C_\bullet(\mathcal{A}(S_{g,1})/\mathcal{A}_\infty(S_{g,1});\mathbb{Z})$ shifted by $2g-1$. 

\begin{prop}[\cite{broaddus2012homology}]
We have a finite resolution of the Steinberg module $\St_g$:
\begin{align*}
    0\longrightarrow \mathcal{F}_{4g-3}\longrightarrow...\longrightarrow \mathcal{F}_1\longrightarrow\mathcal{F}_0\longrightarrow \St_g\longrightarrow 0,
\end{align*}
where the maps are cellular chain maps. Moreover, if we tensor the resolution by $\mathbb{Q}$, then we get a projective resolution of $\St_g\otimes_\mathbb{Z}\mathbb{Q}$.
\end{prop}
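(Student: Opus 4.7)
The plan is to realize $\mathcal{F}_\bullet$ as a shifted truncation of the cellular chain complex of the relative simplicial pair $(\mathcal{A}(S_{g,1}), \mathcal{A}_\infty(S_{g,1}))$ and then read off exactness from the known homology. A $(k-1)$-simplex of $\mathcal{A}(S_{g,1})$ is a $k$-arc system, and such a simplex lies in $\mathcal{A}_\infty(S_{g,1})$ precisely when the arcs fail to fill $S_{g,1}$; hence the cellular chain group of the quotient complex satisfies
\[ C_{2g+j-1}(\mathcal{A}(S_{g,1})/\mathcal{A}_\infty(S_{g,1});\mathbb{Z}) = \mathcal{F}_j, \]
with the simplicial boundary as the (cellular) differential. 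The range $0 \leq j \leq 4g-3$ reflects the extremal arc counts: a minimal filling arc system on $S_{g,1}$ has exactly $2g$ arcs (one face after cutting yields $\chi = 2 - 2g = \chi(S_{g,1})$), while an ideal triangulation has $6g-3$ arcs, so $\dim \mathcal{A}(S_{g,1}) = 6g-4$.

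Next I would compute the homology of this truncated complex. Contractibility of $\mathcal{A}(S_{g,1})$ (Harer) combined with the long exact sequence of the pair gives
\[ H_k(\mathcal{A}(S_{g,1})/\mathcal{A}_\infty(S_{g,1});\mathbb{Z}) \simeq \widetilde{H}_{k-1}(\mathcal{A}_\infty(S_{g,1});\mathbb{Z}) \]
for all $k \geq 1$. Harer's theorem that $\mathcal{A}_\infty(S_{g,1})$ is homotopy equivalent to a wedge of $(2g-2)$-spheres then makes this reduced homology vanish except in degree $2g-2$, so the only non-vanishing homology of the quotient is in degree $2g-1$, and by the very definition of the Steinberg module recalled in the excerpt this top homology is $\St_g$. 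Exactness of
\[ 0 \to \mathcal{F}_{4g-3} \to \cdots \to \mathcal{F}_1 \to \mathcal{F}_0 \to \St_g \to 0 \]
follows at once.

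For the projectivity assertion I would observe that the $\Mod_{g,1}$-stabilizer of any oriented filling arc system is finite: a filling system rigidifies $S_{g,1}$ up to the finite group of simplicial automorphisms of the dual cell decomposition, and the chosen ordering of arcs only cuts this down further. Hence each $\mathcal{F}_j$ decomposes as a direct sum of permutation modules $\mathbb{Z}[\Mod_{g,1}/H_\alpha]$ with $|H_\alpha|$ finite. After tensoring with $\mathbb{Q}$, Maschke's theorem realizes each $\mathbb{Q}[\Mod_{g,1}/H_\alpha]$ as a direct summand of $\mathbb{Q}[\Mod_{g,1}]$, so $\mathcal{F}_j \otimes \mathbb{Q}$ is a projective $\mathbb{Q}[\Mod_{g,1}]$-module; equivalently, after restricting to a torsion-free finite index subgroup $G \leq \Mod_{g,1}$, every stabilizer becomes trivial and $\mathcal{F}_j \otimes \mathbb{Q}$ is in fact free over $\mathbb{Q}[G]$.

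The principal obstacle is the appeal to Harer's homotopy equivalence $\mathcal{A}_\infty(S_{g,1}) \simeq \bigvee S^{2g-2}$: this is the deepest input and I would simply quote it from \cite{harer1986virtual} rather than reproduce the cellular-collapse argument. The remainder is essentially bookkeeping, though care is required in matching the index shift between simplex dimension and arc count and in verifying compatibility of signs between the cellular boundary and the orientations on arc systems.
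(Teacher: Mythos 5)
The paper cites this proposition from \cite{broaddus2012homology} and gives no proof of its own, so there is no in-paper argument to compare against. Your reconstruction is correct and is essentially Broaddus's argument: identify $\mathcal{F}_\bullet$ with the relative cellular chain complex of $\bigl(\mathcal{A}(S_{g,1}),\mathcal{A}_\infty(S_{g,1})\bigr)$ shifted by $2g-1$, deduce exactness from contractibility of the arc complex together with Harer's identification $\mathcal{A}_\infty(S_{g,1})\simeq\mathcal{C}_g\simeq\bigvee S^{2g-2}$, and obtain projectivity over $\mathbb{Q}$ from finiteness of the cell stabilizers.
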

\begin{cor}[\cite{broaddus2012homology}]\label{cor:proj_resolution}
 Let $A$ be a $\Mod_g$- (or $\Mod_{g,1}$-) module. Then
 \begin{align*}
     H^i(\Mod_{g,1};\mathbb{Q}\otimes_\mathbb{Z}A)&\simeq H_{4g-3-i}((\mathcal{F}_\bullet\otimes_\mathbb{Z}\mathbb{Q})\otimes_{\Mod_{g,1}}A)\\
     H^i(\Mod_g;\mathbb{Q}\otimes_\mathbb{Z}A)&\simeq H_{4g-3-i}((\mathcal{F}_\bullet\otimes_\mathbb{Z}\mathbb{Q})\otimes_{\Mod_{g}}A).
 \end{align*}
\end{cor}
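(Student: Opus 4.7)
The plan is to combine the rational virtual duality for the mapping class groups with the projective resolution of the Steinberg module established in the preceding proposition. Starting from the duality isomorphism
\begin{align*}
H^i(\Mod_{g,1};\mathbb{Q}\otimes_\mathbb{Z} A) \;\simeq\; H_{4g-3-i}(\Mod_{g,1};\St_g\otimes_\mathbb{Z} A\otimes_\mathbb{Z}\mathbb{Q}),
\end{align*}
and its analogue for $\Mod_g$ (together with the fact that the $\Mod_{g,1}$-action on $\St_g$ factors through the surjection $\Mod_{g,1}\twoheadrightarrow\Mod_g$, so the resolution is equally a $\mathbb{Q}\Mod_g$-resolution), the task reduces to computing the right-hand group homology from the resolution $\mathcal{F}_\bullet\otimes_\mathbb{Z}\mathbb{Q}\to\St_g\otimes_\mathbb{Z}\mathbb{Q}$.

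First I would verify that tensoring this projective $\mathbb{Q}\Mod_{g,1}$-resolution over $\mathbb{Q}$ with $A\otimes_\mathbb{Z}\mathbb{Q}$, equipped with the diagonal action, produces a projective resolution of $\St_g\otimes_\mathbb{Z} A\otimes_\mathbb{Z}\mathbb{Q}$. Exactness is immediate since tensoring over the field $\mathbb{Q}$ is exact. The substantive point is projectivity of each $(\mathcal{F}_j\otimes_\mathbb{Z}\mathbb{Q})\otimes_\mathbb{Q}(A\otimes_\mathbb{Z}\mathbb{Q})$, which follows from the standard fact that for any group $G$ and any projective $\mathbb{Q}G$-module $P$ and $\mathbb{Q}G$-module $M$, the diagonal-action module $P\otimes_\mathbb{Q} M$ is again projective, via the twist isomorphism $\mathbb{Q}G\otimes_\mathbb{Q} M_{\mathrm{diag}}\cong \mathbb{Q}G\otimes_\mathbb{Q} M_{\mathrm{triv}}$ that reduces to the free case.

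Then I would apply the coinvariants functor to this projective resolution to compute the group homology:
\begin{align*}
H_j(\Mod_{g,1};\St_g\otimes_\mathbb{Z} A\otimes_\mathbb{Z}\mathbb{Q}) \;\simeq\; H_j\!\left(\big((\mathcal{F}_\bullet\otimes_\mathbb{Z}\mathbb{Q})\otimes_\mathbb{Q}(A\otimes_\mathbb{Z}\mathbb{Q})\big)_{\Mod_{g,1}}\right),
\end{align*}
and identify this with $H_j\!\big((\mathcal{F}_\bullet\otimes_\mathbb{Z}\mathbb{Q})\otimes_{\Mod_{g,1}} A\big)$ through the canonical isomorphism between the coinvariants of a diagonal tensor product and the tensor product over the integral group ring. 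Plugging $j=4g-3-i$ into the duality isomorphism then yields the asserted formula; the $\Mod_g$ statement follows verbatim with $\Mod_{g,1}$ replaced by $\Mod_g$ throughout.

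The only real obstacle is the homological-algebra bookkeeping: confirming that the diagonal tensor product of a projective $\mathbb{Q}\Mod_{g,1}$-module with an arbitrary coefficient module is projective, and cleanly identifying diagonal coinvariants with the tensor product over the group ring. Everything else is a formal consequence of virtual duality and the existence of the finite projective resolution $\mathcal{F}_\bullet\otimes_\mathbb{Z}\mathbb{Q}$; note that the passage from $\mathbb{Z}$- to $\mathbb{Q}$-coefficients is essential here, as $\mathcal{F}_\bullet$ itself is only $\mathbb{Z}\Mod_{g,1}$-free away from finite stabilizers and acquires projectivity only after inverting the orders of those stabilizers.
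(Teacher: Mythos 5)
Your argument for the $\Mod_{g,1}$ isomorphism is correct: you write the coefficients $\St_g\otimes_\mathbb{Z} A\otimes_\mathbb{Z}\mathbb{Q}$ as the diagonal tensor product $(\St_g\otimes_\mathbb{Z}\mathbb{Q})\otimes_{\mathbb{Q}}(A\otimes_\mathbb{Z}\mathbb{Q})$, tensor the projective $\mathbb{Q}\Mod_{g,1}$-resolution $\mathcal{F}_\bullet\otimes_\mathbb{Z}\mathbb{Q}$ with $A\otimes_\mathbb{Z}\mathbb{Q}$ diagonally (still projective by the twist $\mathbb{Q}G\otimes_\mathbb{Q} M_{\mathrm{diag}}\cong\mathbb{Q}G\otimes_\mathbb{Q} M_{\mathrm{triv}}$), take coinvariants, and identify $(P\otimes_\mathbb{Q} M)_G$ with $P\otimes_{\mathbb{Z}G}M$. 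Combined with virtual duality for $\Mod_{g,1}$, whose virtual cohomological dimension is $4g-3$, this gives the first formula.

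The $\Mod_g$ formula, however, does not ``follow verbatim.'' The parenthetical claim that the resolution ``is equally a $\mathbb{Q}\Mod_g$-resolution'' because the $\Mod_{g,1}$-action on $\St_g$ factors through $\Mod_{g,1}\twoheadrightarrow\Mod_g$ confuses the augmentation with the resolving objects. The terms $\mathcal{F}_j$ are free on oriented filling systems of the \emph{once-punctured} surface $S_{g,1}$, and the point-pushing kernel $\pi_1(S_g)\lhd\Mod_{g,1}$ acts nontrivially on arcs, so the $\Mod_{g,1}$-action on $\mathcal{F}_\bullet$ does not descend to $\Mod_g$; the complex is not a $\mathbb{Q}\Mod_g$-resolution. (If it were, one would get a projective $\mathbb{Q}\Mod_g$-resolution of $\St_g\otimes\mathbb{Q}$ of length $4g-3$, at odds with the fact that $\Mod_g$ is a virtual duality group of dimension $4g-5$.) Moreover, even setting aside the module-structure problem, ``verbatim replacement'' in your first step would invoke virtual duality for $\Mod_g$, whose shift — stated explicitly a few lines above the corollary in the text — is $4g-5-i$, not the $4g-3-i$ appearing in the statement. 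So the $\Mod_g$ case requires a genuinely separate argument comparing $\Mod_{g,1}$ with $\Mod_g$ through the Birman exact sequence $1\to\pi_1(S_g)\to\Mod_{g,1}\to\Mod_g\to 1$; your proposal leaves that step, and the resulting discrepancy in degree shifts, entirely unaddressed.
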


\subsection{Chord diagrams}
In order to keep track of the filling systems, it is convenient to use a chord diagram to represent them. Given an arc system on $S_{g,1}$, if we cut out a disk neighbourhood of the marked point and deform the cut arcs away from the marked point, then we will get a collection of chords of the boundary circle (for instance, see Figure~\ref{fig:filling_chord}).
\begin{figure}
\centering
    \begin{tikzpicture}
      \draw (-3.0,0).. controls (-2.9,1.7) and (1.9,1.7).. (2,0);
      \draw (-3.0,0).. controls (-2.9,-1.7) and (1.9,-1.7).. (2,0);
      \draw (-2.1,0).. controls (-1.7,0.4) and (-1.5,0.4).. (-1.0,0);
      \draw (-2.2,0.1).. controls (-1.7,-0.2) and (-1.4,-0.2).. (-0.9,0.1);
      \draw (1.1,0).. controls (0.7,0.4) and (0.5,0.4).. (0,0);
      \draw (1.2,0.1).. controls (0.7,-0.2) and (0.4,-0.2).. (-0.1,0.1);
      \draw[red] (-1.5,0) ellipse (1cm and 0.5cm);
      \draw[blue] (-0.5,0).. controls (-0.7,-0.1) and (-0.9,-0.1).. (-1.2,-0.1);
      \draw[blue,dotted] (-1.2,-0.1).. controls (-1.6,-0.4) and (-1.6,-1).. (-1.2,-1.3);
      \draw[blue] (-1.2,-1.3).. controls (-0.7,-0.7) and (-0.6,0).. (-0.5,0);
       \draw[brown] (-0.5,0).. controls (-0.3,-0.1) and (-0.1,-0.1).. (0.2,-0.1);
      \draw[brown,dotted] (0.2,-0.1).. controls (0.6,-0.4) and (0.6,-1).. (0.2,-1.3);
      \draw[brown] (0.2,-1.3).. controls (-0.3,-0.7) and (-0.4,0).. (-0.5,0);
      \draw[green] (0.5,0) ellipse (1cm and 0.5cm);
      \node at (-0.5,0) [circle,draw,fill,inner sep=0pt,minimum size=3pt,label=below:$p$]{};
      \begin{scope}[xshift=5cm]
         \draw (0,0) circle (2cm);
         \draw[red] (-1.9,0.6).. controls (-0.5,0.5) and (-0.2,-0.4).. (-0.7,-1.8);
         \draw[blue] (-1.8,-0.6).. controls (-0.2,0.0) and (0.3,-0.4).. (0.8,-1.8);
         \draw[brown] (1.9,-0.6).. controls (0.5,-0.5) and (0.2,0.4).. (0.7,1.8);
         \draw[green] (1.8,0.6).. controls (0.2,0.0) and (-0.3,0.4).. (-0.8,1.8);
      \end{scope}
    \end{tikzpicture}
    \caption{A filling system (left) and its chord diagram}\label{fig:filling_chord}
\end{figure}
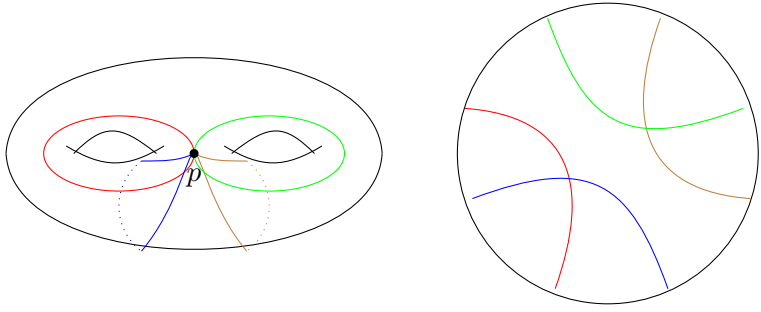
\begin{df}
 A \emph{labelled $k$-chord diagram} is a collection of $(2g+k)$ chords of an oriented circle such that we labelled them by isotopy classes of arcs on $S_{g,1}$. The orientation of the chords has to be compatible with that of the circle.
\end{df} 

Given an oriented filling system, one can construct a labelled chord diagram associated with it. We call a chord diagram \emph{proper} if it really comes from an oriented filling system. A labelled chord diagram with $2g+k$ chords is proper if and only if (cf. \cite{broaddus2012homology}):
\begin{itemize}
    \item there are no parallel edges (include chords and edges of the circle), i.e. no two edges can deform to another without changing the intersections of chords;
    \item there are $k$ cycles if we trace along the chords and edges (each chord and edge can be gone through twice) of the circle.
\end{itemize}
The advantage to work on chord diagrams is that $\Mod_{g,1}$ (or $\Mod_g$) acts transitively on the set of labelled chord diagrams of the same topological type (i.e. the underlying topological spaces of $1$-complexes are orientation preserving isomorphic). It will be convenient to make some argument about the generators of $\St_g$ by just considering the chord diagrams of some certain topological type.

Let $x_1,...,x_{2g}$ be a standard set of generators of the fundamental group of a genus $g$ closed surface. Let $\phi_{g}$ be the filling system with $2g$ arcs so that the associated chord diagram is as depicted in Figure~\ref{figure:chord1}, where
\begin{align*}
    a_1&=x_1\\
    a_{i}&=\begin{cases}
    x_{i} & \mbox{ if } i=2j\\
    x_{i} x_{i-2}^{-1}& \mbox{ if } i=2j+1
    \end{cases}
\end{align*}

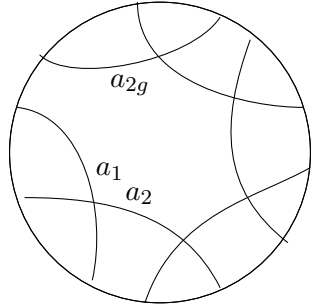
\begin{figure}
    \centering
  \begin{tikzpicture}
    \draw (0,0) circle (2cm);
         \draw (0,0) circle (2cm);
         \draw(-1.9,0.6).. controls (-1,0.5) and (-0.7,-0.8)..node[right]{$a_1$} (-0.9,-1.7);
         \draw(-1.8,-0.6).. controls (-0.7,-0.6) and (0.3,-0.8)..node[above]{$a_2$} (0.8,-1.8);
         \draw(-0.2,-2).. controls (0.2,-0.8) and (1.3,-0.6).. (2,-0.2);
         \draw (1.7,-1.2).. controls (0.8,-0.5) and (0.8,0.4).. (1.2,1.5);
         \draw (1.9,0.6).. controls (1.2,0.6) and (-0.3,0.9).. (-0.3,2);
          \draw (0.8,1.8).. controls (0.4,1.2) and (-1.2,0.9).. node[below]{$a_{2g}$}(-1.6,1.3);
  \end{tikzpicture}
    \caption{The chord diagram corresponding to filling system $\phi_g$}\label{figure:chord1}
\end{figure}

\begin{prop}[\cite{broaddus2012homology}]\label{prop:gen}
The Steinberg module $\St_g$ is generated by $[\phi_g]$ as a $\Mod_{g,1}$- (or $\Mod_g$-) module. Moreover, the class of any filling system whose chord diagram has more than one connected component of chords will be trivial in $\St_g$.
\end{prop}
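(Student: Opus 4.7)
The plan is to work within the resolution $\mathcal{F}_\bullet \twoheadrightarrow \St_g$ of Corollary~\ref{cor:proj_resolution}, viewing $\St_g$ as the quotient of the free abelian group $\mathcal{F}_0$ of oriented $2g$-arc filling systems by the image of $\partial_1 : \mathcal{F}_1 \to \mathcal{F}_0$. Since $\Mod_{g,1}$ acts transitively on labelled chord diagrams of the same topological type, both assertions reduce to statements about topological types of chord diagrams modulo these boundary relations.

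The central tool is the following \emph{arc-surgery relation}: given any $(2g+1)$-arc filling system $\psi^+$, the class $\partial_1[\psi^+] = \sum_{\alpha \in \psi^+} \pm [\psi^+ \setminus \{\alpha\}]$ vanishes in $\St_g$, where summands $[\psi^+ \setminus \{\alpha\}]$ whose underlying system is not filling are dropped (they lie in $\mathcal{A}_\infty$). Choosing $\psi^+ = \psi \cup \{\beta\}$ for a suitably placed new arc $\beta$ therefore expresses $[\psi]$ as a signed sum of other $2g$-arc filling classes, and all my reductions will use this device.

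For the second statement I would proceed as follows. If $\psi$ has a chord diagram separated by some chord $\gamma$ in the small disk neighborhood of the marked point, lift $\gamma$ to an essential arc $\beta$ on $S_{g,1}$ and form $\psi^+ = \psi \cup \{\beta\}$. The separating nature of $\gamma$ forces most faces of $\psi^+$ to be non-filling; I would verify that the surviving filling faces other than $\psi$ come in signed pairs identified by a local involution that swaps the two endpoints of $\beta$ and reverses the orientation of the filling system. These pairs cancel in $\partial_1[\psi^+]$, leaving $[\psi] = 0$ in $\St_g$. For the first statement, I would then show by repeated arc surgery that every \emph{connected} $2g$-arc filling diagram can be reduced to the topological type of $\phi_g$: at each stage, Euler characteristic forces the complement to be a single $4g$-gon, and one picks a pair of chords whose endpoint pattern differs from the standard $a_1 b_1 a_1^{-1} b_1^{-1} \cdots$ arrangement, introduces an auxiliary arc $\beta$ that straddles them, and uses the surgery relation to trade the old configuration for a strictly simpler one, measured by a combinatorial complexity such as the number of non-standard crossings. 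Combined with the second statement, this places every class of $\St_g$ in the $\Mod_{g,1}$-orbit of $[\phi_g]$.

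The main obstacle is the combinatorial bookkeeping for the arc-surgery step, specifically the claim that the non-$\psi$ filling faces of $\psi^+$ pair up as advertised. One must control the local configuration near the bridge $\beta$ to show that precisely two other faces are filling and to exhibit the orientation-reversing involution that swaps them; this reduces to a careful case analysis of how the arcs of $\psi$ meet $\beta$ on either side. Once this local lemma is established, both parts of the proposition follow by induction on a suitable complexity of the chord diagram, with the base case provided directly by $\phi_g$ itself.
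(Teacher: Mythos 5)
The paper does not prove Proposition~\ref{prop:gen}; it imports the result from \cite{broaddus2012homology}. Your overall framework --- viewing $\St_g$ as $\mathcal{F}_0/\operatorname{im}(\partial_1)$ and simplifying chord-diagram types using boundary relations $\partial_1[\psi^+]=0$ coming from $(2g+1)$-arc filling systems --- is indeed the right one and matches the spirit of Broaddus's argument. The step that goes wrong is the cancellation mechanism you propose for the vanishing statement.

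You claim the surviving filling faces $\psi^+\setminus\{\alpha_i\}$ with $\alpha_i\neq\beta$ ``come in signed pairs identified by a local involution'' and cancel in $\partial_1[\psi^+]$. This cannot happen. The chain group $\mathcal{F}_0$ is free abelian on isotopy classes of oriented $2g$-arc filling systems; for $i\neq j$ the systems $\psi^+\setminus\{\alpha_i\}$ and $\psi^+\setminus\{\alpha_j\}$ are \emph{distinct basis elements} (one contains $\alpha_j$ but not $\alpha_i$, the other $\alpha_i$ but not $\alpha_j$, and the arcs of a simplex are pairwise non-isotopic). A nontrivial mapping class --- your involution swapping the endpoints of $\beta$ --- acts on $\mathcal{F}_0$ by permuting basis vectors with signs; it does not identify distinct basis vectors, and $\partial_1[\psi^+]$ is a fixed element of $\mathcal{F}_0$, not something one may simplify ``up to the $\Mod_{g,1}$-action.'' So the proposed pairwise cancellation is impossible at the chain level, and interpreting it as a cancellation in $\St_g$ would presuppose the relation $[\psi^+\setminus\{\alpha_i\}]=-[\psi^+\setminus\{\alpha_j\}]$ in $\St_g$, which is circular. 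The way to make a single-relation argument of this shape work is to choose the bridge arc $\beta$ so that \emph{every} face $\psi^+\setminus\{\alpha_i\}$ with $\alpha_i\neq\beta$ is non-filling, and hence already vanishes in $C_{2g-1}(\mathcal{A}/\mathcal{A}_\infty)$; then $\partial_1[\psi^+]=\pm[\psi]$ with nothing left over. Your separating $\beta$ is the natural candidate, but the non-filling claim must be verified directly (e.g.\ by a cycle/Euler-characteristic count on the chord diagram of $\psi^+\setminus\{\alpha_i\}$), not by a pairing. The reduction you sketch for the generation statement is more plausible in outline, but without an explicit strictly decreasing complexity on connected chord diagrams it remains incomplete.
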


\subsection{Proof of Theorem~\ref{thm:vanish}}
To prove Theorem~\ref{thm:vanish}, it is equivalent to proving that $H^{4g-2}(\Mod_{g,2};\mathbb{Q})=0$. According to \cite{birman1969mapping}, we have the following exact sequence relating the mapping class group $\Mod_{g,1}$ and $\Mod_{g,2}$:
\begin{align}
    1\longrightarrow \pi_1(S_{g,1}\setminus \{p_1\},p_2)\longrightarrow \Mod_{g,2}\longrightarrow \Mod_{g,1}\longrightarrow 0,
\end{align}
 where the first arrow is induced by point pushing and the second arrow is just forgetting a marked point. We can then apply the Lyndon-Hochschild-Serre spectral sequence of group cohomology to such exact sequence, i.e. there is a spectral sequence of cohomological type
 \begin{align*}
     E^{p,q}_2=H^p\big(\Mod_{g,1};H^q(\pi_1(S_{g,n}\setminus \{p_1\},p_2);\mathbb{Q})\big)\implies H^{p+q}(\Mod_{g,2};\mathbb{Q}).
 \end{align*}
 Note that $\pi_1(S_{g,n}\setminus \{p_1\},p_2)$ is a free group generated by $2g$ elements, and we have
 \begin{align*}
     H^{q}(\pi_1(S_{g,n}\setminus \{p_1\},p_2);\mathbb{Q})=
     \begin{cases}
     \mathbb{Q}& \mbox{ if } q=0\\
     \mathbb{Q}^{2g}& \mbox{ if } q=1\\
     0 & \mbox{ otherwise }
     \end{cases}
 \end{align*}
 For $g=1$, due to Eichler-Shimura isomorphism and the fact that there is no nontrivial odd weight modular form, we have $H^1(\SL(2,\mathbb{Z});\mathbb{Q}[x,y]_1)=0$, where $\mathbb{Q}[x,y]_1$ is the space of $\mathbb{Q}$-linear function on $\mathbb{Q}^2$. Thus, we only need to consider the case where $g>1$. 
 Since the virtual cohomological dimension of $\Mod_{g,1}$ is $4g-3$, the terms $E^{p,q}_2$ where $p+q=4g-2$ can only be non-trivial if $p=4g-3$ and $q=1$. Hence, we need to show that $H^{4g-3}(\Mod_{g,1};\mathbb{Q}^{2g})=0$. The action of $\Mod_{g,1}$ on $\mathbb{Q}^{2g}$ comes from the action of $\Mod_{g,2}$ on $\pi_1(S_{g,n}\setminus \{p_1\},p_2)$. It is obvious that any point pushing with respect to $p_1$ will act trivially on $\mathbb{Q}^{2g}$ which is isomorphic to the abelianization of $\pi_1(S_{g},p_1)$ tensored with $\mathbb{Q}$. Since we also have the exact sequence
 \begin{align*}
    1\longrightarrow \pi_1(S_{g},p_1)\longrightarrow \Mod_{g,1}\longrightarrow \Mod_g\longrightarrow 1, 
 \end{align*}
 the $\Mod_{g,1}$-module structure on $\mathbb{Q}^{2g}$ is induced by the $\Mod_g$-module structure on it via the map $\Mod_{g,1}\longrightarrow \Mod_g$. Moreover, by Corollary~\ref{cor:proj_resolution}, we have
 \begin{align*}
     H^{4g-3}(\Mod_{g,1};\mathbb{Q}^{2g})&\simeq H_0(\Mod_{g,1};\St_g\otimes_\mathbb{Z}\mathbb{Q}^{2g})\\
     &\simeq H_0(\Mod_g;\St_g\otimes_\mathbb{Z}\mathbb{Q}^{2g})\\
     &\simeq H_0(\mathcal{F}_\bullet\otimes_{\Mod_g}\mathbb{Q}^{2g})
 \end{align*}
 Thus, once we can show that $H_0(\mathcal{F}_\bullet\otimes_{\Mod_g}\mathbb{Q}^{2g})=0$, the theorem is proved. We end our proof with the following proposition.
 \begin{prop}
 Let $\mathbb{Q}^{2g}$ be the $\Mod_g$- (or $\Mod_{g,1}$-) module we described above. Then $H_0(\mathcal{F}_\bullet\otimes_{\Mod_g}\mathbb{Q}^{2g})=0$.
 \end{prop}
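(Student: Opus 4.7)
The plan is to identify $H_0(\mathcal{F}_\bullet\otimes_{\Mod_g}\mathbb{Q}^{2g})$ with the diagonal $\Mod_g$-coinvariants $(\St_g\otimes_\mathbb{Z}\mathbb{Q}^{2g})_{\Mod_g}$ and then to force their vanishing via a hyperelliptic-type involution. Since $\mathcal{F}_\bullet\otimes_\mathbb{Z}\mathbb{Q}$ is a projective resolution of $\St_g\otimes_\mathbb{Z}\mathbb{Q}$ as a $\mathbb{Q}[\Mod_g]$-module, we have
\[
H_0\bigl(\mathcal{F}_\bullet\otimes_{\Mod_g}\mathbb{Q}^{2g}\bigr)\;=\;\St_g\otimes_{\mathbb{Z}[\Mod_g]}\mathbb{Q}^{2g}\;=\;\bigl(\St_g\otimes_\mathbb{Z}\mathbb{Q}^{2g}\bigr)_{\Mod_g}.
\]
By Proposition~\ref{prop:gen}, $\St_g$ is cyclically generated over $\mathbb{Z}[\Mod_g]$ by $[\phi_g]$; combined with the coinvariant identity $g[\phi_g]\otimes w\equiv[\phi_g]\otimes g^{-1}w$, this shows that every coinvariant class is represented by some $[\phi_g]\otimes v$ with $v\in\mathbb{Q}^{2g}=H_1(S_g;\mathbb{Q})$. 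It therefore suffices to prove $[\phi_g]\otimes v\equiv 0$ for every $v$.

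The next step is to produce an element $\rho\in\Mod_g$ satisfying (i) $\rho[\phi_g]=[\phi_g]$ in $\St_g$, and (ii) $\rho$ acting as $-\mathrm{id}$ on $H_1(S_g;\mathbb{Q})$. Given such a $\rho$, the coinvariance relation becomes
\[
[\phi_g]\otimes v \;=\;\rho[\phi_g]\otimes\rho v\;=\;[\phi_g]\otimes(-v)\;=\;-[\phi_g]\otimes v,
\]
forcing $2[\phi_g]\otimes v=0$ and hence $[\phi_g]\otimes v=0$ rationally. The natural candidate for $\rho$ is a hyperelliptic involution $\iota$: realizing $S_g$ as the double cover of $S^2$ branched over $2g+2$ Weierstrass points, $\iota$ always acts as $-\mathrm{id}$ on $H_1(S_g;\mathbb{Q})$. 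Placing $p_1$ at a Weierstrass point, each arc of a filling system $\phi$ chosen to lift an arc in $S^2$ from $p_1$ to another Weierstrass point becomes setwise $\iota$-invariant with orientation reversed; since all $2g$ orientations flip, the induced sign on the oriented filling system is $(-1)^{2g}=+1$, so $\iota[\phi]=[\phi]$.

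The main obstacle is ensuring that the filling system $\phi$ just constructed represents the same $\Mod_g$-orbit class as $\phi_g$, i.e., that its chord diagram at $p_1$ is of the primitive topological type depicted in Figure~\ref{figure:chord1}. This requires choosing the $2g$ target Weierstrass points so that the cyclic order in which the arcs leave $p_1$ yields a chord-connected diagram of the correct combinatorial type, which should be achievable explicitly for every $g$ but is the combinatorial heart of the argument. If $[\phi]$ turns out to represent a different chord-connected type, one falls back on Proposition~\ref{prop:gen}: $[\phi]$ is then a $\mathbb{Z}[\Mod_g]$-combination of $[\phi_g]$, and the relation $[\phi]\otimes v=-[\phi]\otimes v$ unpacks into a system of relations among classes $[\phi_g]\otimes g_i v$ which, by letting $v$ range over $\mathbb{Q}^{2g}$, should be sufficient to deduce $[\phi_g]\otimes v=0$ for all $v$ and thereby complete the proof.
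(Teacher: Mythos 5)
Your identification $H_0(\mathcal{F}_\bullet\otimes_{\Mod_g}\mathbb{Q}^{2g})=(\St_g\otimes\mathbb{Q}^{2g})_{\Mod_g}$ and the reduction to showing $[\phi_g]\otimes v=0$ are correct, and the involution idea is a genuinely different strategy from the paper's. The paper never invokes a hyperelliptic involution; instead it works inside the chain complex $\mathcal{F}_\bullet$ itself: it takes the $(2g+1)$-arc filling system $\phi_g'$, applies the boundary $\partial\otimes 1$, kills the middle terms using the disconnected-diagram part of Proposition~\ref{prop:gen}, obtains a two-term recursion $[\phi_g\otimes a_{i+1}^\vee]=-[\phi_g\otimes a_i^\vee]$, and finishes by a telescoping sum. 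Your route, if it worked, would be shorter and would also handle $g=1$ uniformly (there the hyperelliptic involution is the central element $-I\in\SL_2(\mathbb{Z})$, acting trivially on $\St_1$ and as $-1$ on $\mathbb{Q}^2$, so the coinvariants vanish immediately), whereas the paper treats $g=1$ separately via Eichler--Shimura.

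However, the main step contains a genuine gap that you flag but do not close, and I do not see how to close it as stated. Near a fixed Weierstrass point $p_1$ the involution $\iota$ acts as rotation by $\pi$ on the tangent space, so each lifted arc $\tilde\alpha_i$ meets $p_1$ in two \emph{antipodal} strands. Consequently the chord diagram of any $\iota$-invariant filling system obtained by your construction consists of $2g$ \emph{diameters}, in which every pair of distinct chords crosses. This is a different topological type from the chord diagram of Figure~\ref{figure:chord1} (for $g\geq 2$ that diagram has far fewer crossings), and so $[\phi]$ does \emph{not} lie in the $\Mod_g$-orbit of $\pm[\phi_g]$. It is only an integer combination $\sum_i n_i\,g_i[\phi_g]$. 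The relation $[\phi]\otimes v\equiv 0$ (valid for all $v$) then gives only the single family of relations $\sum_i n_i\,[\phi_g]\otimes g_i^{-1}v\equiv 0$; since the map $v\mapsto[\phi]\otimes v$ need not be surjective onto the coinvariants when $[\phi]$ is a sum of several orbit translates, this does not force $[\phi_g]\otimes v\equiv 0$. The sentence ``should be sufficient to deduce'' is exactly where a proof is needed, and the paper's boundary-map computation is precisely the extra input that supplies it. A smaller point: arcs in $\mathcal{A}(S_{g,1})$ are unoriented, and ``oriented filling system'' in this paper means an \emph{ordering} of the arcs; so the sign is the sign of the induced permutation of arcs (trivial here, since $\iota$ fixes each $\tilde\alpha_i$ setwise), not a product of per-arc ``orientation reversals.'' Your conclusion $\iota[\phi]=[\phi]$ is correct but for this reason, not the one you give.
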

 \begin{proof}
For $g=1$, by the virtual duality and our reasoning above, we are done. The argument we give now is for $g>1$. Note that $H_0(\Mod_g;\St_g\otimes_\mathbb{Z}\mathbb{Q}^{2g})$ is just the coinvariant $(\St_g\otimes_\mathbb{Z}\mathbb{Q}^{2g})_{\Mod_g}$. Then by Proposition~\ref{prop:gen}, it is generated by $\{[\phi_g]\otimes x_i^\vee\}$ because $\{x_1,...,x_{2g}\}$ form a basis for $H_1(S_{g,1};\mathbb{Q})$. Here, $x_i^\vee$ refers to the intersection product $(-,x_i)$ with respect to $x_i$. Thus, it suffices to show that $\phi_g\otimes x_i^\vee$ lies in the image of the cellular chain map 
\begin{align*}
    \partial\otimes id:\mathcal{F}_1\otimes_{\Mod_g}\mathbb{Q}^{2g}\longrightarrow\mathcal{F}_0\otimes_{\Mod_g}\mathbb{Q}^{2g}
\end{align*} for every $i$. 

For $g>1$, we consider the filling system corresponding to Figure~\ref{figure:chord2}, where $a_1,...,a_{2g}$ as defined before and $a_{2g+1}=x_{2g-1}^{-1}$. The cellular chain map image will be \begin{align*}
    (\partial\otimes 1)(\phi_g'\otimes a_i^\vee)=\big(\sum_{j=1}^{2g+1}(-1)^{i-1}(a_1,...,\hat{a_j},...,a_{2g+1})\big)\otimes a_i^\vee
\end{align*}
\begin{figure}
    \centering
  \begin{tikzpicture}
    \draw (0,0) circle (2cm);
         \draw (0,0) circle (2cm);
         \draw(-1.9,0.6).. controls (-1,0.5) and (-0.7,-0.8)..node[right]{$a_1$} (-0.9,-1.7);
         \draw(-1.8,-0.6).. controls (-0.7,-0.6) and (0.3,-0.8)..node[above]{$a_2$} (0.8,-1.8);
         \draw(-0.2,-2).. controls (0.2,-0.8) and (1.3,-0.6).. (2,-0.2);
         \draw (1.7,-1.2).. controls (0.8,-0.5) and (0.8,0.4).. (1.2,1.5);
         \draw (1.9,0.6).. controls (1.2,0.6) and (-0.3,0.9).. node[below]{$a_{2g}$}(-0.3,2);
          \draw (0.8,1.8).. controls (0.4,1.2) and (-1.2,0.9).. node[below]{$a_{2g+1}$}(-1.6,1.3);
  \end{tikzpicture}
    \caption{The chord diagram corresponding to filling system $\phi_g'$}\label{figure:chord2}
\end{figure}
By Proposition~\ref{prop:gen}, the terms for $j=2,3,...,2g$ will be zero because the corresponding chord diagram will be disconnected. In addition, there is some $t\in \Mod_g$ mapping $a_{i}$ to $a_{i+1}$ for $i=1,...,2g$. Hence, for $i=1,2,...,2g$, we have
\begin{align*}
    (\partial\otimes 1)(\phi_g'\otimes a_i^\vee)=&(t\cdot\phi_g)\otimes a_i^\vee+\phi_g\otimes a_i^\vee\\
    = &\phi_g\otimes (t\cdot a_i)^\vee+\phi_g\otimes a_i^\vee\\
    =&\phi_g\otimes a_{i+1}^\vee+\phi_g\otimes a_i^\vee
\end{align*}
These yield for $i=1,...,2g+1$
\begin{align*}
    [\phi_g\otimes a_i^\vee]=(-1)^{i-1}[\phi_g\otimes a_1^\vee] \in (\St_g\otimes_\mathbb{Z}\mathbb{Q}^{2g})_{\Mod_g}.
\end{align*}
Then we have
\begin{align*}
    (g+1)[\phi_g\otimes x_1^\vee]=&[\phi_g\otimes x_1^\vee]+\bigg(\sum_{j=1}^{g-1}[\phi_g\otimes(x_{2j+1}^\vee-x_{2j-1}^\vee)]\bigg)+[-\phi_g\otimes x_{2g-1}^\vee]
    =0
\end{align*}
Thus, $[\phi_g\otimes x_1^\vee]=0$ and one can easily deduce that $[\phi_g\otimes x_i^\vee]=0$ for $i=1,...,2g$.
 \end{proof}

 \section{Preparatory material for the computation of spin stratum classes}\label{sec:prepare}
 
 The goal of this section is to provide technical details about how to compute the clutching pullback of a (spin) stratum class. In addition, we will explain why the clutching pullbacks will give us a solution under certain assumptions. In Section~\ref{sec:inj_pull} to Section~\ref{sec:excess_int}, we will explain how this pullback mechanism works, while in Section~\ref{sec:resolve_res} to Section~\ref{sec:horG}, we will demonstrate how to do the actual computation when we reduce to the case of lower genus or fewer marked points.
 
 \subsection{Injectivity of clutching pullbacks}\label{sec:inj_pull}
 
Recall that given a stable graph $\Gamma$ of genus~$g$ and $n$ legs, there is a natural morphism 
 \begin{align*}
     \xi_\Gamma:\overline{\mathcal{M}}_\Gamma= \prod_{v\in V(\Gamma)}\overline{\mathcal{M}}_{g(v),n(v)}\longrightarrow \overline{\mathcal{M}}_{g,n},
 \end{align*}
 which is called the clutching map. We have mentioned in the introduction that in order to compute the spin stratum classes $[\overline{\mathcal{H}}_g(\mu)]^{\spin}$, we will compute the clutching pullback of them and solve a system of linear equations. 
 
 The existence of a solution is clear. The uniqueness of the solution is due to the strengthened version of a result in \cite{arbarello1998calculating}. 
 
 \begin{prop}\label{prop:arbarello}
Consider the clutching maps $\xi_\Gamma:\overline{\mathcal{M}}_{\Gamma}\longrightarrow \overline{\mathcal{M}}_{g,n}$, where $\Gamma$ are stable graphs of genus $g$ and $n$-legs and with only one edge, the direct sum of pullback maps
\[\oplus_\Gamma\xi_\Gamma^*:\mathrm{H}^{j}(\overline{\mathcal{M}}_{g,n};\mathbb{Q})\longrightarrow \bigoplus_\Gamma \mathrm{H}^{j}(\overline{\mathcal{M}}_\Gamma;\mathbb{Q})\] is injective if $j\leq d(g,n)$, where
\begin{align}\label{eq:range_inj}
    d(g,n)=
    \begin{cases}
    2n-7 &\mbox{ if } g=0\\
    2g-1 & \mbox{ if } n=0,1\\
    2g & \mbox{ if } n=2\\
    2g-3+n & \mbox{ otherwise } 
    \end{cases}
\end{align}
\end{prop}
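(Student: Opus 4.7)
The plan is to follow the strategy of Arbarello--Cornalba: factor the direct sum $\bigoplus_\Gamma \xi_\Gamma^*$ through the restriction to the boundary $\partial\overline{\mathcal{M}}_{g,n}$, and reduce injectivity to a top-degree cohomology vanishing statement on the open moduli space $\mathcal{M}_{g,n}$ via Poincar\'e--Lefschetz duality. Each one-edge clutching morphism $\xi_\Gamma$ factors as a finite surjective cover $\overline{\mathcal{M}}_\Gamma \twoheadrightarrow \delta_\Gamma$ onto a boundary divisor followed by the inclusion $\delta_\Gamma \hookrightarrow \overline{\mathcal{M}}_{g,n}$; since pullback by a finite surjective map is injective on rational cohomology (via the trace/transfer), one is reduced to showing that the restriction
\[
\iota^{*}\colon H^{j}(\overline{\mathcal{M}}_{g,n};\mathbb{Q}) \longrightarrow H^{j}(\partial\overline{\mathcal{M}}_{g,n};\mathbb{Q})
\]
is injective in our range, and that the further restriction $H^{j}(\partial\overline{\mathcal{M}}_{g,n};\mathbb{Q}) \to \bigoplus_\Gamma H^{j}(\delta_\Gamma;\mathbb{Q})$ is injective.

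For the first step, the long exact sequence of the pair $(\overline{\mathcal{M}}_{g,n},\partial\overline{\mathcal{M}}_{g,n})$ identifies $\ker\iota^{*}$ with the image of the Gysin map $H^{j}_{c}(\mathcal{M}_{g,n};\mathbb{Q}) \to H^{j}(\overline{\mathcal{M}}_{g,n};\mathbb{Q})$. Poincar\'e--Lefschetz duality gives $H^{j}_{c}(\mathcal{M}_{g,n};\mathbb{Q}) \cong H^{6g-6+2n-j}(\mathcal{M}_{g,n};\mathbb{Q})^{\vee}$, so injectivity reduces to vanishing of $H^{k}(\mathcal{M}_{g,n};\mathbb{Q})$ for $k \geq 6g-6+2n-d(g,n)$. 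One then dispatches by cases: for $n\geq 3$ the critical degree strictly exceeds $\mathrm{vcd}(\Mod_{g,n}) = 4g-4+n$ and vanishes by Harer; for $n=0$ (resp.\ $n=1$) the critical degree equals $\mathrm{vcd}(\Mod_{g})=4g-5$ (resp.\ $\mathrm{vcd}(\Mod_{g,1})=4g-3$) and vanishing is the Church--Farb--Putman theorem; for $n=2$ the critical degree equals $\mathrm{vcd}(\Mod_{g,2})=4g-2$ and vanishing is precisely Theorem~\ref{thm:vanish}, the key new input; the $g=0$ case is handled separately using Keel's explicit tautological presentation of $H^{*}(\overline{\mathcal{M}}_{0,n};\mathbb{Q})$ generated by boundary classes.

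For the second step, the normal-crossings structure of $\partial\overline{\mathcal{M}}_{g,n}$ furnishes a Mayer--Vietoris spectral sequence
\[
E_{1}^{p,q} = \bigoplus_{|S|=p+1} H^{q}(\delta_{S};\mathbb{Q}) \ \Longrightarrow\ H^{p+q}(\partial\overline{\mathcal{M}}_{g,n};\mathbb{Q}),
\]
where $\delta_{S}=\bigcap_{\Gamma\in S}\delta_{\Gamma}$ are deeper boundary strata indexed by stable graphs with more than one edge. The kernel of $H^{j}(\partial\overline{\mathcal{M}}_{g,n}) \to \bigoplus_\Gamma H^{j}(\delta_\Gamma)$ is built from the $E_{\infty}^{p,j-p}$ with $p\geq 1$; each $\delta_{S}$ is finitely covered by a product $\prod_{v}\overline{\mathcal{M}}_{g_v,n_v}$ of moduli spaces of strictly smaller complexity, so an induction on $(g,n)$---fed by the same vanishing inputs as in the first step applied to each factor---keeps these contributions outside the target range $j\leq d(g,n)$.

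The main obstacle is the $n=2$ case of the first step: this is exactly where $d(g,n)$ is tight against the vcd, and it is the sole gap between the original Arbarello--Cornalba bound and the strengthened bound stated here. It is removed by Theorem~\ref{thm:vanish}, whose proof via the chord-diagram and Steinberg module argument was the technical heart of the previous section. The remaining ingredients---the $g=0$ edge case and the Mayer--Vietoris bookkeeping---are routine but require careful accounting of the induction across the values of $(g,n)$ and of the relations in Keel's presentation.
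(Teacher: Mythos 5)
Your proposal is correct and follows essentially the same route as the paper: factor through the boundary restriction, use the long exact sequence of the pair together with Poincar\'e--Lefschetz duality to reduce to vanishing of $H^k(\mathcal{M}_{g,n};\mathbb{Q})$ at and above $6g-6+2n-d(g,n)$, supply the needed vanishing from the vcd bound for $n\geq 3$, from Church--Farb--Putman for $n\in\{0,1\}$, from Theorem~\ref{thm:vanish} for $n=2$, and treat $g=0$ separately via Keel's boundary-generated presentation --- all of which matches the paper, which then delegates the remaining deduction to the argument of Lemma~2.6 of Arbarello--Cornalba. The one place you diverge is replacing that citation with a Mayer--Vietoris spectral sequence sketch; the claim that the $E_\infty^{p,j-p}$ terms with $p\geq 1$ are ``kept outside the target range'' by induction does not hold as stated (the deeper strata $\delta_S$ certainly contribute nonzero cohomology in degrees $j\leq d(g,n)$), so that step needs the actual deleted-neighborhood/restriction argument of Arbarello--Cornalba rather than a dimension count, but since you flag that bookkeeping as left to be done, this is a matter of completeness rather than a wrong turn.
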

 
\begin{proof}
The first thing different from the original version is the first row of $d(g,n)$. Indeed, according to \cite{keel1992intersection}, the cohomology ring $\mathrm{H}^*(\overline{\mathcal{M}}_{0,n};\mathbb{Q})$ is generated by cycle classes of boundary strata. In particular, the cohomology groups of odd degrees are trivial. Moreover, a cycle class of a boundary stratum of $\overline{\mathcal{M}}_{0,n}$ can be written as a product of classes of boundary divisors. If a class $x$ of even codimension $j\leq 2n-8$ lies in the kernel of the sum of clutching pullbacks, then any intersection product $x\cdot \prod_{i=1,...,j}[D_{\Gamma_i}]$ will also be zero because $x\cdot [D_\Gamma]= \xi_{\Gamma *}\xi_\Gamma^*(x)$. Due to Poincaré duality, this will force that $x=0$.

The other thing different from the original version is the second and third rows of the values of $d(g,n)$. According to Theorem 1 in \cite{church2012rational} and our proof in Section~\ref{thm:vanish}, $\mathrm{H}^{4g-5}(\mathcal{M}_{g},\mathbb{Q})=\mathrm{H}^{4g-3}(\mathcal{M}_{g,1},\mathbb{Q})=\mathrm{H}^{4g-2}(\mathcal{M}_{g,2},\mathbb{Q})=0$. This implies that $\mathrm{H}^{2g-1}_c(\mathcal{M}_{g};\mathbb{Q})=\mathrm{H}^{2g-1}_c(\mathcal{M}_{g,1};\mathbb{Q})=\mathrm{H}^{2g}_c(\mathcal{M}_{g,2};\mathbb{Q})=0$. Then by the long exact sequence:
\begin{align*}
    ...\longrightarrow \mathrm{H}^j_c(\mathcal{M}_{g,n};\mathbb{Q})\longrightarrow \mathrm{H}^j(\overline{\mathcal{M}}_{g,n};\mathbb{Q})\longrightarrow \mathrm{H}^j(\partial\mathcal{M}_{g,n};\mathbb{Q})\longrightarrow ...,
\end{align*}
we can conclude that $\mathrm{H}^j(\overline{\mathcal{M}}_{g,n};\mathbb{Q})\longrightarrow \mathrm{H}^{j}(\partial\mathcal{M}_{g,n};\mathbb{Q})$ is injective if $j\leq d(g,n)$. 

Then, by the same argument of Lemma 2.6 in \cite{arbarello1998calculating}, it yields that 
\begin{align*}
   \oplus_\Gamma\xi_\Gamma^*:\mathrm{H}^{j}(\overline{\mathcal{M}}_{g,n};\mathbb{Q})\longrightarrow \bigoplus_\Gamma \mathrm{H}^{j}(\overline{\mathcal{M}}_\Gamma;\mathbb{Q})
\end{align*}
is injective for $j\leq d(g,n)$.
\end{proof}

\begin{rem}
If $\overline{\mathcal{H}}_g(\mu)$ is a meromorphic stratum, then the stratum classes $[\overline{\mathcal{H}}_g(\mu)]$ lie in the degree $2g$ component of the tautological ring $\RH^{2g}(\overline{\mathcal{M}}_{g,n})\subseteq \mathrm{H}^{2g}(\overline{\mathcal{M}}_{g,n})$; if $\overline{\mathcal{H}}_g(\mu)$ is a holomorphic stratum, then the stratum class lies in $\RH^{2g-2}(\overline{\mathcal{M}}_{g,n})\subseteq \mathrm{H}^{2g-2}(\overline{\mathcal{M}}_{g,n})$. Thus, for meromorphic strata and $n=2$, the codimension of the (spin) stratum class will be out of the range of injectivity $d(g,n)=2g-3+n$ in the original proposition in \cite{arbarello1998calculating}. That is why we need to improve $d(g,2)$ to $2g$. 
\end{rem}

In practice, computing the clutching pullback of a stratum class with respect to the self-node graph $\Gamma_0$ will be very difficult, especially if our signature $\mu$ already contains a pair of simple poles. We will illustrate the problem in Section~\ref{sec:simpole}. Hence, we want to assume that the injectivity is still guaranteed if we neglect the clutching pullback with respect to $\Gamma_0$ under the condition $n\geq 3$. 
\begin{assu}
\label{prop:assum}
For $g>0,n\geq 3$, the pullback map 
\begin{align*}
   \oplus_{\Gamma\neq \Gamma_0}\xi_\Gamma^*:\mathrm{H}^{k}(\overline{\mathcal{M}}_{g,n})\longrightarrow \bigoplus_\Gamma \mathrm{H}^{k}(\overline{\mathcal{M}}_\Gamma) 
\end{align*} is injective if $k\leq 2g-3+n$.
\end{assu}
This assumption is tested to be true for the cases of $(g,n)$ equal
\begin{align*}
    (1,3),(1,4),(1,5),(1,6),(2,3),(2,4),(2,5),(3,3),(3,4), (4,3)
\end{align*}
for which our computer is capable to do the calculations by \texttt{admcycles} in a reasonable time.

\subsection{Clutching Pullback Formula}\label{sec:excess_int}
In this subsection, we aim to establish a formula for doing intersection theory with the (spin) stratum class. Recall that in the introduction we mentioned that we want to compute the clutching pullback
\[\xi_\Gamma^*\big([\overline{\mathcal{H}}_g(\mu)]^{\spin}\big)= \xi_\Gamma^*p_*\big([\mathbb{P}\overline{\mathcal{M}}_{g,n}(\mu)]^{\spin}\big),\]
where $\Gamma$ is a one-edge stable graph of genus $g$ and $n$ leg. We will first proceed to derive the formula for the stratum class, and then give the spin variant of it. 


We will apply the excess intersection formula (see for example \cite{fulton2016intersection} Prop. 17.4.1) to derive our pullback formula for $\xi_\Gamma^*$. First, we have to identify the fiber product
 
 \begin{center}
   \begin{tikzcd}
 \mathcal{F}_{\Gamma,\mu}\arrow[r,"t_1"]\arrow[d,"t_2"]& \mathbb{P}\Xi\overline{\mathcal{M}}_{g,n}(\mu) \arrow[d,"p"]\\
 \overline{\mathcal{M}}_{\Gamma} \arrow[r,"\xi_\Gamma"] & \overline{\mathcal{M}}_{g,n}
 \end{tikzcd}  
 \end{center}
 and the top Chern class of the excess normal bundle $E=t_2^*\mathcal{N}_{\Gamma}/\mathcal{N}_{t_1}$, where $\mathcal{N}_{\Gamma}$ and $\mathcal{N}_{t_1}$ are the normal bundles with respect to $\iota_\Gamma$ resp. $t_1$. Then \[\xi_\Gamma^*p_*\big([\mathbb{P}\Xi\overline{\mathcal{M}}_{g,n}(\mu)]\big)=t_{2*}c_{\textrm{top}}(E).\]
 
 Let $\Delta$ and $\Gamma$ be stable graphs. A \emph{graph contraction} of $\Delta$ to $\Gamma$ is defined as a pair of maps between the set of half edges (i.e. $H(\Delta)$ and $H(\Gamma)$) resp. the set of vertices (i.e.$V(\Delta)$ and $V(\Gamma)$):
\[\bigg(f_v:V(\Delta)\twoheadrightarrow V(\Gamma),\quad f_h:H(\Gamma)\hookrightarrow H(\Delta)\bigg), \]
satisfying the following conditions:
\begin{itemize}
    \item[(i)] For any half edge $h_0\in H(\Gamma)$ at a vertex $v_0\in V(\Gamma)$, the image $f_h(h_0)\in H(\Delta)$ should be at a vertex in $f_v^{-1}(v_0)$,
    \item[(ii)] the map $f_h$ respects markings and edges are mapped to edges
    \item[(iii)] for any vertex $v_0\in V(\Gamma)$, the subgraph in $\Delta$ consisting of all the vertices in $f_v^{-1}(v_0)$ and the half edges adjacent to them is a connected stable graph of genus $g(v_0)$ and $|H(\Gamma)_{v_0}|$-markings.
\end{itemize}

A \emph{$\Gamma$-structure} on $\Delta$ is just a graph contraction of $\Delta$ to $\Gamma$. 

 
\begin{prop}\label{prop:excess_int}
Given a signature $\mu=(m_1,...,m_n)$ such that $\sum m_i=2g-2$, then for a one-edge stable graph $\Gamma$, we have the equality
\begin{equation}
\label{eq:excess}
    \begin{split}
      \xi_\Gamma^*\big([\overline{\mathcal{H}}_g(\mu)]\big)
=&\sum_{\Gamma_H\overset{f}{\rightsquigarrow}\Gamma}\frac{1}{|\Aut(\Gamma)|}\xi_{f*}p_{\Gamma}\bigg([\mathbb{P}\Xi\overline{\mathcal{M}}_{\Gamma_H}]\bigg)+\\
&\sum_{\substack{\Delta\overset{f}{\rightsquigarrow}\Gamma\\ \Delta\in \LG_1(\mu)}}\frac{1}{|\Aut(\Delta)|}\cdot \frac{\prod_{e\in E(\Delta)}\kappa_e}{\kappa_{f}}\cdot  \xi_{f*}\bigg(\pi_\top^*p^{\top}_*[\mathbb{P}\Xi\overline{\mathcal{M}}_{\Delta,\top}]\cdot \pi_\bot^*p^{\bot}_*[\mathbb{P}\Xi\overline{\mathcal{M}}_{\Delta,\bot}]\bigg)
\end{split}
\end{equation}
where $\Gamma_H$ is the horizontal level graph whose underlying graph is $\Gamma$ and $\kappa_f$ is the enhancement of the image edge in $\Delta$ corresponding to the single edge in $\Gamma$ via $f$.
\end{prop}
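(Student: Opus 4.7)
I propose the following route. The starting point is the refined excess intersection formula (e.g.~\cite{fulton2016intersection}, Prop.~17.4.1) applied to the Cartesian square preceding the proposition. Since $\xi_\Gamma$ is a regular embedding of codimension one and $p$ is proper,
$$\xi_\Gamma^*\bigl(p_*[\mathbb{P}\Xi\overline{\mathcal{M}}_{g,n}(\mu)]\bigr)\;=\;t_{2*}\bigl(c_{\mathrm{top}}(E)\cap [\mathcal{F}_{\Gamma,\mu}]\bigr),$$
with $E=t_1^*\mathcal{N}_{\xi_\Gamma}/\mathcal{N}_{t_2}$ the excess normal bundle, computed componentwise. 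The plan is to decompose $\mathcal{F}_{\Gamma,\mu}$ into irreducible components and read off the pushforward of $c_{\mathrm{top}}(E)$ from each.

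A geometric point of $\mathcal{F}_{\Gamma,\mu}$ is a multi-scale differential whose enhanced level graph $\Delta$ admits a graph contraction $f\colon\Delta\rightsquigarrow\Gamma$. Since $\xi_\Gamma$ has codimension one and the boundary divisors of $\mathbb{P}\Xi\overline{\mathcal{M}}_{g,n}(\mu)$ are exhausted by the horizontal one-edge graph $\Gamma_H$ and the vertical two-level graphs in $\LG_1(\mu)$, only those pairs $(\Delta,f)$ with $\Delta=\Gamma_H$ or $\Delta\in\LG_1(\mu)$ give components of $\mathcal{F}_{\Gamma,\mu}$ of the expected codimension; deeper boundary strata sit in the closures of these and contribute only through the excess bundle computation at the generic point.

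For the horizontal case $\Delta=\Gamma_H$, the single smoothing parameter of the horizontal node inside $\mathbb{P}\Xi\overline{\mathcal{M}}_{g,n}(\mu)$ matches the smoothing parameter of the node of $\Gamma$ inside $\overline{\mathcal{M}}_{g,n}$, so $\mathcal{N}_{t_2}\to t_1^*\mathcal{N}_{\xi_\Gamma}$ is an isomorphism, $c_{\mathrm{top}}(E)=1$, and the pushforward produces the first summand of~\eqref{eq:excess}; the factor $1/|\Aut(\Gamma)|$ is the usual stacky degree of the clutching. For the vertical case $\Delta\in\LG_1(\mu)$ I would invoke the local coordinate description of $\mathbb{P}\Xi\overline{\mathcal{M}}_{g,n}(\mu)$ near $D_\Delta$ from~\cite{bainbridge2019moduli}: the single level-smoothing parameter $t$ is related to the node-smoothing parameter $s_e$ at each edge $e\in E(\Delta)$ by $s_e=t^{\kappa_e}$ up to a prong-rotation unit. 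For the image edge this yields $p^*[D_\Gamma]=\kappa_f[D_\Delta]+\dotsb$, contributing a multiplicity $\kappa_f$ in the excess bundle. Combining this with the presentation of $D_\Delta$ as a $\prod_e(\mathbb{Z}/\kappa_e\mathbb{Z})$-cover of $\mathbb{P}\Xi\overline{\mathcal{M}}_{\Delta,\top}\times \mathbb{P}\Xi\overline{\mathcal{M}}_{\Delta,\bot}$ (modulo the level rotation, of order $\ell_\Delta$), together with the factorisation $\xi_\Delta=\xi_\Gamma\circ\xi_f$ through which $t_2$ decomposes, produces both the coefficient $\tfrac{1}{|\Aut(\Delta)|}\cdot\tfrac{\prod_e\kappa_e}{\kappa_f}$ and the cycle $\xi_{f*}\bigl(\pi_\top^*p^\top_*[\mathbb{P}\Xi\overline{\mathcal{M}}_{\Delta,\top}]\cdot\pi_\bot^*p^\bot_*[\mathbb{P}\Xi\overline{\mathcal{M}}_{\Delta,\bot}]\bigr)$.

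The main obstacle is the combinatorial bookkeeping in the vertical case: one must verify the local normal-bundle relation carefully and check that the interplay between the prong-rotation cover of order $\prod_e\kappa_e$, the level rotation of order $\ell_\Delta$, and the multiplicity $\kappa_f$ collapses to precisely $\prod_e\kappa_e/\kappa_f$ rather than a cognate expression. Once this is pinned down, the remaining steps are organisational: enumerating the fibre-product components via $\Gamma$-structures, exploiting $\xi_\Delta=\xi_\Gamma\circ\xi_f$ for the pushforward, and recognising the product of top- and bottom-level stratum classes via $(\pi_\top\times\pi_\bot)_*$ applied to $[\mathbb{P}\Xi\overline{\mathcal{M}}_{\Delta,\top}]\times[\mathbb{P}\Xi\overline{\mathcal{M}}_{\Delta,\bot}]$.
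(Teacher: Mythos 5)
Your overall strategy coincides with the paper's: both start from the excess intersection formula for the Cartesian square with proper $p$ and the clutching immersion, decompose the fiber product $\mathcal{F}_{\Gamma,\mu}$ into pieces indexed by pairs $(\Delta,f)$ with $\Delta\in\{\Gamma_H\}\cup\LG_1(\mu)$, read off $c_{\mathrm{top}}(E)=1$ on the horizontal piece, and multiply the excess Chern class by the covering degree on the vertical pieces. Two things need fixing.

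First, a notational slip: the excess bundle should be $E=t_2^*\mathcal{N}_{\xi_\Gamma}/\mathcal{N}_{t_1}$, not $t_1^*\mathcal{N}_{\xi_\Gamma}/\mathcal{N}_{t_2}$ --- the normal bundle of $\xi_\Gamma$ lives on $\overline{\mathcal{M}}_\Gamma$ and must be pulled back along $t_2$, and the closed immersion in the square is $t_1$, not $t_2$.

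Second, and this is the substantive gap you already flagged: the local relation between the level-smoothing parameter and the node-smoothing parameter is \emph{not} $s_e=t^{\kappa_e}$. The correct exponent is $\ell_\Delta/\kappa_e$ (where $\ell_\Delta=\mathrm{lcm}_e\,\kappa_e$); indeed the paper does not re-derive this from local coordinates but instead cites Proposition~7.2 of \cite{costantini2020chern}, which provides a short exact sequence $0\to\mathcal{N}_{t_1\circ j_f}^{\otimes\ell_\Delta/\kappa_f}\to j_f^*t_2^*\mathcal{N}_{\xi_\Gamma}\to\mathcal{Q}\to 0$ with $\mathcal{Q}$ of rank zero, hence an excess Chern class of $\ell_\Delta/\kappa_f$ rather than $\kappa_f$. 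With your value $\kappa_f$ and the prong-matching covering degree $\prod_e\kappa_e/\ell_\Delta$, the product would be $\kappa_f\prod_e\kappa_e/\ell_\Delta$, which does not equal the target $\prod_e\kappa_e/\kappa_f$ unless $\ell_\Delta=\kappa_f^2$; with the corrected excess factor $\ell_\Delta/\kappa_f$ everything cancels to $\prod_e\kappa_e/\kappa_f$ as required. The paper also isolates the $1/|\Aut(\Delta)|$ factor cleanly via a separate lemma showing $j=\sqcup j_f:\coprod D_{\Delta,f}\to\mathcal{F}_{\Gamma,\mu}$ is finite flat surjective of degree $|\Aut(\Delta)|$ on each piece, which you fold informally into the component count; importing that lemma makes your component-wise bookkeeping precise.
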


 Before we prove the above formula, we have to understand the fiber product $\mathcal{F}_{\Gamma,\mu}$. 
 \begin{df}
  Let $\pi:\mathcal{X}\longrightarrow B$ be a family of stable curves such that the dual graph of every fiber is some degeneration of $\Gamma$. Then a $\Gamma$-marking on $\mathcal{X}$ is the following data:
  
  \begin{itemize}
      \item a collection of $|E(\Gamma)|$ sections $\sigma_1,...,\sigma_{|E(\Gamma)|}$ with image in the singular locus of $\pi$;
      \item a collection of $|H(\Gamma)|$ sections $\sigma_{1,1},\sigma_{1,2},\sigma_{2,1},...,\sigma_{|E(\Gamma)|,2}$ of the normalization $\widetilde{\mathcal{X}}$ of $\mathcal{X}$;
      \item a collection of $|V(\Gamma)|$ connected components of $\mathcal{X}\setminus \ \cup_{i=0}^{|E(\Gamma)|}\sigma_i$
  \end{itemize} 
  such that each fiber nodal curve is marked according to $\Gamma$.
 \end{df}
 
 We can regard $\overline{\mathcal{M}}_\Gamma$ as the stack of families of stable curves with a $\Gamma$-marking. As a result, one can easily see that the fiber product $\mathcal{F}_{\Gamma,\mu}$ is just the moduli stack of families of multi-scale differentials $(\mathcal{X},\boldsymbol{\eta})$ with a $\Gamma$-marking on $\mathcal{X}$. Let $\LG^1(\mu)$ be the set of level graphs compatible with signature $\mu$ such that the codimension of $D_\Delta$ is $1$.
 

\begin{lm}
The natural morphism 
\begin{align*}
    j:=\sqcup j_f:\coprod_{\substack{\Delta\in \LG^1(\mu) \\ \Delta\overset{f}{\rightsquigarrow}\Gamma}}D_{\Delta,f}\longrightarrow \mathcal{F}_{\Gamma,\mu},
\end{align*} where $D_{\Delta,f}$ is the stack of families of multi-scale differentials with a $\Delta$-marking on the underlying family of curves and a $\Gamma$ structure $f$ on $\Delta$ being specified, is finite, flat and surjective.
\end{lm}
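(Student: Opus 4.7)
The plan is to verify the three properties separately by working locally on $\mathcal{F}_{\Gamma,\mu}$, exploiting the normal crossings structure of the boundary of $\mathbb{P}\Xi\overline{\mathcal{M}}_{g,n}(\mu)$ together with its stratification indexed by enhanced level graphs.

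For surjectivity I would argue pointwise. A geometric point of $\mathcal{F}_{\Gamma,\mu}$ is a triple $(\mathcal{X},\boldsymbol{\eta},m)$, where $(\boldsymbol{\eta},\widetilde{\Delta},\boldsymbol{\sigma})$ is a multi-scale differential on $\mathcal{X}$ with underlying level graph $\widetilde{\Delta}$, and $m$ is a $\Gamma$-marking on $\mathcal{X}$. The marking $m$ singles out a distinguished edge $e^{*}$ of the dual graph of $\mathcal{X}$, namely the image under the half-edge injection of the unique edge of $\Gamma$. Two cases arise: if $e^{*}$ is horizontal in $\widetilde{\Delta}$, take $\Delta\in\LG^{1}(\mu)$ to be the one-level graph keeping only the single horizontal edge $e^{*}$; if $e^{*}$ is vertical and crosses some level gap of $\widetilde{\Delta}$, take $\Delta$ to be the two-level graph obtained by collapsing all levels above that gap into a single top level and all levels below into a single bottom level. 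In either case the point lies in $D_{\Delta}$, the marking $m$ descends to a $\Gamma$-structure $f$ on $\Delta$, and the point lifts to $D_{\Delta,f}$.

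Finiteness splits into properness and finite fibers. Properness of $p$ passes by base change to properness of $\mathcal{F}_{\Gamma,\mu}\to\overline{\mathcal{M}}_{\Gamma}$, and each $D_{\Delta,f}\to\overline{\mathcal{M}}_{\Gamma}$ factors through the proper stack $D_{\Delta}$, so $j_{f}$ is proper. For finite fibers, a point $P$ with underlying level graph $\widetilde{\Delta}$ lies in $D_{\Delta}$ for only finitely many $\Delta\in\LG^{1}(\mu)$ (one per level gap of $\widetilde{\Delta}$ and one per horizontal edge), and each such $\Delta$ admits only finitely many $\Gamma$-structures compatible with $m$.

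Flatness is where I expect the main obstacle, and I would handle it via a local model. Near a point $P$ with underlying level graph $\widetilde{\Delta}$ of $L$ levels and $h$ horizontal edges, the boundary of $\mathbb{P}\Xi\overline{\mathcal{M}}_{g,n}(\mu)$ is étale locally a normal crossings divisor with smooth components $\{t_{\ell}=0\}_{\ell=1,\ldots,L-1}$ (level-gap coordinates) and $\{s_{e}=0\}$ (horizontal-edge coordinates). On the $\overline{\mathcal{M}}_{g,n}$ side, $\xi_{\Gamma}$ is a regular embedding whose image is cut out near $p(P)$ by the single smoothing coordinate $u_{e^{*}}$ of the distinguished node. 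The key computation, using the plumbing description of \cite{bainbridge2019moduli}, is that $p^{*}(u_{e^{*}})$ is a monomial in the $t_{\ell}$ and $s_{e}$: specifically $\prod_{\ell}t_{\ell}^{\kappa_{e^{*}}}$ over the level gaps crossed by $e^{*}$ in the vertical case, and $s_{e^{*}}$ in the horizontal case. This exhibits $\mathcal{F}_{\Gamma,\mu}$ locally as a Cartier divisor whose reduced components are precisely the $D_{\Delta,f}$ indexed by the monomial factors. Flatness of each inclusion $D_{\Delta,f}\hookrightarrow\mathcal{F}_{\Gamma,\mu}$ then reduces to flatness of a single coordinate hyperplane $\{t=0\}\hookrightarrow\{t_{1}\cdots t_{r}=0\}$ inside a smooth ambient space, and flatness of $j$ follows since flatness is local. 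The principal difficulty is to verify the precise monomial formula for $p^{*}(u_{e^{*}})$ in the plumbing coordinates of \cite{bainbridge2019moduli} and to match the resulting Cartier divisor structure on $\mathcal{F}_{\Gamma,\mu}$ with the scheme structure coming from the fiber product.
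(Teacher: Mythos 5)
Your surjectivity and finiteness arguments are sound and agree in substance with what the paper needs, but the flatness step contains a genuine error and misses the key structural fact that the paper's proof is built around.

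The claim that flatness ``reduces to flatness of a single coordinate hyperplane $\{t=0\}\hookrightarrow\{t_{1}\cdots t_{r}=0\}$ inside a smooth ambient space'' is false: for $r\geq 2$ this closed immersion is the prototypical non-flat map. Concretely, with $R=k[t_1,t_2]/(t_1t_2)$ and $M=R/(t_1)$, the sequence $0\to(t_1)\to R\to M\to 0$ tensored with $M$ shows $\Tor_1^R(M,M)\neq 0$, since $(t_1)\otimes_R M\simeq k$ maps to zero in $M$. Taking a disjoint union over the components does not help, because a direct sum of non-flat modules is still non-flat. So the local model you propose would in fact \emph{disprove} the lemma rather than prove it, which signals that the model itself is not capturing how the $D_{\Delta,f}$ sit over $\mathcal{F}_{\Gamma,\mu}$.

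The other issue is your description of $j_f$ as an ``inclusion.'' It is not: $j_f$ forgets the full $\Delta$-marking and retains only the marking induced via the $\Gamma$-structure $f$, and the paper's follow-up Lemma records that $j_f$ has degree $|\Aut(\Delta)|$ onto the relevant irreducible component of $\mathcal{F}_{\Gamma,\mu}$. The flatness you need is for this finite cover, not for a closed immersion of a branch.

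The paper's proof sidesteps both difficulties by an abstract factorization. It first observes that
\begin{align*}
    \coprod_{\substack{\Delta\in\LG^1(\mu)\\ \Delta\rightsquigarrow\Gamma}} D_{\Delta}\times_{\overline{\mathcal{M}}_{g,n}} \overline{\mathcal{M}}_\Gamma\longrightarrow \mathcal{F}_{\Gamma,\mu}
\end{align*}
is \'etale, which gives flatness for free without any coordinate computation; it then shows that $D_{\Delta}\times_{\overline{\mathcal{M}}_{g,n}}\overline{\mathcal{M}}_\Delta\times_{\overline{\mathcal{M}}_{g,n}}\overline{\mathcal{M}}_\Gamma\to D_{\Delta}\times_{\overline{\mathcal{M}}_{g,n}}\overline{\mathcal{M}}_\Gamma$ is finite flat surjective, and identifies $\overline{\mathcal{M}}_\Delta\times_{\overline{\mathcal{M}}_{g,n}}\overline{\mathcal{M}}_\Gamma$ with $\coprod_{\Delta\overset{f}{\rightsquigarrow}\Gamma}\overline{\mathcal{M}}_\Delta$ to recover the sum over $\Gamma$-structures. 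To salvage your route, you would have to replace the ``branch of a normal crossing divisor'' model with something that accounts for the $|\Aut(\Delta)|$-to-$1$ covering and for the non-reduced scheme structure of $\mathcal{F}_{\Gamma,\mu}$ coming from the exponent $\ell(\Delta)/\kappa_f$ in $p^{*}$ of the node coordinate; in practice, establishing the \'etaleness the paper invokes is the cleaner way to do this.
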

\begin{proof}
Notice that a family of multi-scale differentials $(\mathcal{X},\boldsymbol{\eta})$ compatible with a level graph $\Delta$ whose underlying family of stable curves has a partial normalization according to $\Gamma$ if and only if there is a $\Gamma$-structure on $\Delta$. The morphism
\begin{align*}
    \coprod_{\substack{\Delta\in\LG^1(\mu)\\ \Delta\rightsquigarrow\Gamma}} D_{\Delta}\times_{\overline{\mathcal{M}}_{g,n}} \overline{\mathcal{M}}_\Gamma\longrightarrow \mathcal{F}_{\Gamma,\mu}
\end{align*}

is \'etale. Moreover,

\begin{align*}
    D_{\Delta}\times_{\overline{\mathcal{M}}_{g,n}}\overline{\mathcal{M}}_\Delta \times_{\overline{\mathcal{M}}_{g,n}}\overline{\mathcal{M}}_\Gamma\longrightarrow D_{\Delta}\times_{\overline{\mathcal{M}}_{g,n}} \overline{\mathcal{M}}_\Gamma
\end{align*}
is finite, flat and surjective. Note that 
\begin{align*}
    \overline{\mathcal{M}}_\Delta \times_{\overline{\mathcal{M}}_{g,n}}\overline{\mathcal{M}}_\Gamma\simeq \coprod_{\Delta\overset{f}{\rightsquigarrow}\Gamma}\overline{\mathcal{M}}_\Delta
\end{align*}
Hence, $j$ is finite, flat and surjective. 

\end{proof}

\begin{lm}\label{lm:formula}
The zeroth Chern class of the excess bundle is
\begin{align*}
c_0(t_2^*\mathcal{N}_\Gamma/\mathcal{N}_{t_1})=\sum_{\substack{\Delta\in\LG^1(\mu)\\ \Delta\rightsquigarrow\Gamma}}\frac{1}{|\Aut(\Delta)|}j_{f,*}\bigg(c_0(j_f^*t_2^*\mathcal{N}_\Gamma/\mathcal{N}_{t_1\circ j_f}) \bigg) 
\end{align*}

\end{lm}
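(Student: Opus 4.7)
The plan is to use the finite, flat, surjective cover $j=\bigsqcup_{(\Delta,f)}j_f\colon \coprod D_{\Delta,f} \to \mathcal{F}_{\Gamma,\mu}$ established in the preceding lemma to reduce the computation of the excess Chern class on $\mathcal{F}_{\Gamma,\mu}$ to a sum of contributions from the individual pieces $D_{\Delta,f}$, and then to collect automorphism factors correctly.

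First, I would analyse how $j$ behaves over each irreducible component of $\mathcal{F}_{\Gamma,\mu}$. Any such component corresponds to some $\Delta \in \LG^1(\mu)$ together with a graph contraction $\Delta \rightsquigarrow \Gamma$. Two pairs $(\Delta,f)$ and $(\Delta,f')$ whose $\Gamma$-structures differ by an element of $\Aut(\Delta)$ give isomorphic stacks $D_{\Delta,f} \simeq D_{\Delta,f'}$ with the same image in $\mathcal{F}_{\Gamma,\mu}$. Consequently, when one sums over all pairs $(\Delta, f)$ with $\Delta$ fixed, each component of $\mathcal{F}_{\Gamma,\mu}$ is hit with multiplicity $|\Aut(\Delta)|$, which is compensated by the weight $\tfrac{1}{|\Aut(\Delta)|}$ in the statement.

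Next, I would identify the pull-back of the excess bundle along $j_f$. The map $j_f$ records the discrete datum of a $\Gamma$-structure on $\Delta$ and is therefore étale onto its image, so $j_f^*\mathcal{N}_{t_1} \simeq \mathcal{N}_{t_1\circ j_f}$; pulling back the defining short exact sequence of the excess bundle yields
\[
j_f^*\bigl(t_2^*\mathcal{N}_\Gamma/\mathcal{N}_{t_1}\bigr) \;\simeq\; j_f^* t_2^*\mathcal{N}_\Gamma / \mathcal{N}_{t_1 \circ j_f},
\]
which matches the bundle appearing on the right-hand side of the claimed equality. Finally, by the projection formula applied to the finite flat map $j_f$, the top Chern class on $\mathcal{F}_{\Gamma,\mu}$ is the weighted push-forward of the top Chern classes on the pieces $D_{\Delta,f}$; taking the weighted sum with factor $\tfrac{1}{|\Aut(\Delta)|}$ produces the formula.

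The main obstacle will be justifying the étaleness claim in the second step: the stack $\mathbb{P}\Xi\overline{\mathcal{M}}_{g,n}(\mu)$ carries nontrivial isotropy from the level rotation group and the prong-matching combinatorics, so one must verify that adding a $\Gamma$-structure is indeed a covering datum (not picking up infinitesimal contributions) and that the normal-bundle identification is compatible with the stacky structure. This requires a local model of $D_\Delta \subset \mathbb{P}\Xi\overline{\mathcal{M}}_{g,n}(\mu)$ at points lying above the boundary of $\overline{\mathcal{M}}_\Gamma$, for which the plumbing construction of \cite{bainbridge2019moduli} provides the necessary description.
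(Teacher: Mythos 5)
Your proposal follows essentially the same route as the paper's proof: both pull the excess bundle back along the pieces $D_{\Delta,f}$ of the finite flat cover $j=\sqcup j_f$, identify $j_f^*\mathcal{N}_{t_1}\simeq \mathcal{N}_{t_1\circ j_f}$, and account for the $\tfrac{1}{|\Aut(\Delta)|}$ factor by observing that $j$ has degree $|\Aut(\Delta)|$ over each irreducible component of $\mathcal{F}_{\Gamma,\mu}$. The paper asserts the normal-bundle identification without comment, so your caveat about verifying the \'etaleness and stacky compatibility is a sensible refinement of the same argument, not a different proof.
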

\begin{proof}
First note that 
\begin{align*}
    j_f^*(t_2^*\mathcal{N}_\Gamma/\mathcal{N}_{t_1})=j_f^*t_2^*\mathcal{N}_\Gamma/j_f^*\mathcal{N}_{t_1}
\end{align*}
and $j_f^*\mathcal{N}_{t_1}=\mathcal{N}_{t_1\circ j_f}$. In addition, if $Q$ is an irreducible component of $\mathcal{F}_{\Gamma,\mu}$, then for any $[C]\in \CH^0(Q)$,
\begin{align*}
    j_*j^*([C])=\deg_Q(j)\cdot[C].
\end{align*}
Since the degree of the morphism $j_f$ is just $|\Aut(\Delta)|$, it yields the formula above.

\end{proof}
Now we can give the proof of Proposition~\ref{prop:excess_int}.
\begin{proof}[Proof of Proposition~\ref{prop:excess_int}] 
We first consider those $\Delta$ which are two level graphs, i.e. $\Delta\in\LG_1$, then we consider the case that $\Delta=\Gamma_H$. Let $(h,h')$ be the edge ($h$ and $h'$ are the half-edges) of the one-edge graph $\Gamma$ and $\mathcal{L}_h$ be the section pullback of the dualizing sheaf of $\overline{\mathcal{M}}_\Gamma$. Then the normal bundle $\mathcal{N}_{\xi_\Gamma}$ is just $\mathcal{L}^\vee_h\otimes\mathcal{L}^{\vee}_{h'} $ (cf. \cite{Arbarello}). According to the proof of Proposition 7.2 in \cite{costantini2020chern}, there is a short exact sequence of coherent sheaves on $D_{\Delta,f}$ (at least outside a subvariety of
codimension two) 
\begin{align*}
   0\longrightarrow \mathcal{N}_{t_1\circ j_f}^{\otimes\ell(\Delta)/\kappa_f}\longrightarrow j_f^*t_2^*\mathcal{N}_{\xi_\Gamma}\longrightarrow \mathcal{Q}\longrightarrow 0 ,
\end{align*}
where $ \mathcal{N}_{t_1\circ j_f}$ is the normal bundle of $D_{\Delta,f}\longrightarrow \mathbb{P}\Xi\overline{\mathcal{M}}_{g,n}(\mu)$. Here, $\ell(\Delta)$ is the l.c.m of the enhancements of the vertical edges of $\Delta$. As a result, $\mathcal{Q}$ has rank $0$ and by default the zeroth Chern class of $\mathcal{Q}$ is just $1$. Hence, the zeroth Chern class of $j_f^*t_2^*\mathcal{N}_{\xi_\Gamma}/\mathcal{N}_{t_1\circ j_f}$ is just $\frac{\ell(\Delta)}{\kappa_f}$.

If $\Delta=\Gamma_H$, then we have
\begin{align*}
    \mathcal{N}_{t_1\circ j_f} \simeq j_f^*t_2^*\mathcal{N}_{\xi_\Gamma} 
\end{align*} 
Hence, the zeroth Chern class of the excess bundle on this component will be just $1$. 

By Lemma~\ref{lm:formula} and due to the fact that $t_2\circ j_f= \xi_{f}\circ p_\Delta$, we have 

\begin{align*}
    t_{2,*}\bigg(c_0(t_2^*\mathcal{N}_\Gamma/\mathcal{N}_{t_1})\bigg)=&\sum_{\Gamma_H\overset{f}{\rightsquigarrow}\Gamma}\frac{1}{|\Aut(\Gamma)|}\xi_{f*}p_{\Gamma*}\bigg([D_{\Gamma_H,f}]\bigg)+\\
&\sum_{\substack{\Delta\overset{f}{\rightsquigarrow}\Gamma\\ \Delta\in \LG_1(\mu)}}\frac{1}{|\Aut(\Delta)|}\cdot \frac{\ell(\Delta)}{\kappa_{f}}\cdot  \xi_{f*}p_{\Delta*}\bigg([D_{\Delta,f}]\bigg) 
\end{align*}

Note that the degree of the morphism $p_\Delta$ is just the number of equivalence class of prong matchings which is just $\frac{\prod_{e\in E(\Delta)}\kappa_e}{\ell(\Delta)}$. Since the image of $p_\Delta$ is just $p^{\top}(\mathbb{P}\Xi\overline{\mathcal{M}}_{\Delta,\top})\times p^{\bot}(\mathbb{P}\Xi\overline{\mathcal{M}}_{\Delta,\bot})$, one gets the Clutching Pullback Formula~\ref{eq:excess}.
\end{proof}

By the same method as in Corollary~\ref{cor:1.3} to determine the spin variant of the projection pushforward of divisor associated to a two-level graph, we now state the spin version of (\ref{eq:excess}):
\begin{prop}
Given a one-edge stable graph $\Gamma$, the Clutching Pullback Formula for the spin stratum class is:
\begin{equation}
\label{eq:excess_p}
    \begin{split}
      \xi_\Gamma^*p_*\big([\mathbb{P}\Xi\overline{\mathcal{M}}_{g,n}(\mu)]^{\spin}\big)
=&\sum_{\Gamma_H\overset{f}{\rightsquigarrow}\Gamma}\frac{1}{|\Aut(\Gamma)|}\xi_{f*}p_{\Gamma*}\bigg([\mathbb{P}\Xi\overline{\mathcal{M}}_{\Gamma_H}]^{\spin}\bigg)+\\
&\sum_{\substack{\Delta\overset{f}{\rightsquigarrow}\Gamma\\ \Delta\in \LG_1^{odd}(\mu)}}\frac{1}{|\Aut(\Delta)|}\cdot \frac{\prod_{e\in E(\Delta)}\kappa_e}{\kappa_f}\cdot  \\ &\xi_{f*}\bigg(
\pi_\top^*p_{\top*}[\mathbb{P}\Xi\overline{\mathcal{M}}_{\Delta,\top}]^{\spin}\cdot \pi_\bot^*p_{\bot*}[\mathbb{P}\Xi\overline{\mathcal{M}}_{\Delta,\bot}]^{\spin}\bigg)
\end{split}
\end{equation}
\end{prop}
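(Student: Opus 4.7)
The plan is to apply the spin-splitting operator $\bullet^{\spin}$ to both sides of the non-spin Clutching Pullback Formula~\eqref{eq:excess} established in Proposition~\ref{prop:excess_int}. The key observation is that $\bullet^{\spin}$ is a linear operation obtained by subtracting the odd-spin class from the even-spin class, and because the spin components of $\mathbb{P}\Xi\overline{\mathcal{M}}_{g,n}(\mu)$ are disjoint connected components of the compactification, the splitting commutes with the pushforward by the projection $p$, with the pullback by $\xi_\Gamma$, and with the pushforward by each clutching morphism $\xi_f$. Hence it suffices to examine how each individual term on the right-hand side of~\eqref{eq:excess} behaves under spin-splitting.

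First I would treat the horizontal graph term. The compactified stratum $\mathbb{P}\Xi\overline{\mathcal{M}}_{\Gamma_H}$ is a substack whose connected components inherit their spin parities from the ambient moduli space, so its spin decomposition is well defined and the spin variant of the first sum is obtained simply by replacing $[\mathbb{P}\Xi\overline{\mathcal{M}}_{\Gamma_H}]$ by $[\mathbb{P}\Xi\overline{\mathcal{M}}_{\Gamma_H}]^{\spin}$. No cancellation occurs here because smoothing a horizontal node does not ambiguate spin in the way that prong-matching choices on vertical edges do.

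Next I would work term by term through the sum over vertical two-level graphs $\Delta \in \LG_1(\mu)$, invoking Proposition~\ref{prop:1.1} and its Corollary~\ref{cor:1.3}. If $\Delta$ has at least one edge of even enhancement, then by part~(1) of Proposition~\ref{prop:1.1} the prong-matchings on any compatible twisted differential split evenly between even and odd spin, so $[D_\Delta]^{\spin}=0$ and the corresponding contribution drops out of the formula. If instead $\Delta\in\LG_1^{odd}(\mu)$, part~(2) of Proposition~\ref{prop:1.1} gives spin additivity: the spin parity of a compatible multi-scale differential equals the sum of the spin parities of its two level components, independently of the prong-matching chosen. This additivity means that the product class in~\eqref{eq:excess} transforms multiplicatively under spin splitting, producing
\[
\pi_\top^* p_{\top*}[\mathbb{P}\Xi\overline{\mathcal{M}}_{\Delta,\top}]^{\spin} \cdot \pi_\bot^* p_{\bot*}[\mathbb{P}\Xi\overline{\mathcal{M}}_{\Delta,\bot}]^{\spin},
\]
while the prong-counting coefficient $\prod_{e\in E(\Delta)}\kappa_e/\kappa_f$ carries over unchanged from the non-spin version.

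The main subtlety I expect to have to articulate carefully is the bridge between the statements of Proposition~\ref{prop:1.1} (which describe the spin parity of individual multi-scale differentials as a function of prong-matchings) and the required identities about the spin class of $D_\Delta$ itself. The bridge is the fact that the morphism $p_\Delta : D_{\Delta,f} \to \mathbb{P}\Xi\overline{\mathcal{M}}_{\Delta,\top}\times \mathbb{P}\Xi\overline{\mathcal{M}}_{\Delta,\bot}$ is finite of degree $\prod_e\kappa_e/\ell_\Delta$ equal to the number of prong-matching equivalence classes, and the spin parity of any point of $D_{\Delta,f}$ depends only on its underlying twisted differential together with its prong-matching. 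Hence the spin splitting of $p_{\Delta *}[D_{\Delta,f}]$ is governed fiberwise by Proposition~\ref{prop:1.1}: in the even-enhancement case the fiberwise contributions cancel in pairs, and in the all-odd case they all carry the same parity equal to the sum of the level parities, yielding the factorized product. Once this bookkeeping is in place, formula~\eqref{eq:excess_p} follows directly from~\eqref{eq:excess} by applying $\bullet^{\spin}$.
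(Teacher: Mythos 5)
Your proposal is correct and takes essentially the same approach the paper intends: the paper gives no separate proof of \eqref{eq:excess_p} beyond the remark that it follows ``by the same method as in Corollary~\ref{cor:1.3},'' and your write-up fills in exactly that argument — applying the excess intersection mechanism of Proposition~\ref{prop:excess_int} to each spin component and invoking Proposition~\ref{prop:1.1} / Corollary~\ref{cor:1.3} to drop the even-enhancement terms and to factorize the odd-enhancement terms into a product of level spin classes. One small imprecision worth tidying: the spin-splitting operator $\bullet^{\spin}$ lives on the cohomology of the compactified stratum, not on $\mathrm{H}^*(\overline{\mathcal{M}}_\Gamma)$, so speaking of it ``commuting with the pullback by $\xi_\Gamma$'' does not type-check as stated; the cleaner formulation is that the excess intersection formula applies to each of $[\mathbb{P}\Xi\overline{\mathcal{M}}_{g,n}(\mu)^\pm]$ separately (since they are proper over $\overline{\mathcal{M}}_{g,n}$) and one subtracts the two outputs.
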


In Section~\ref{sec:exa_clutch}, we will give an example to illustrate how to use the clutching pullback formula to compute the clutching pullbacks of a stratum class. Then later in Section~\ref{sec:exa_spin_pull}, we will demonstrate the use of the clutching pullback formula to compute the clutching pullbacks of a spin stratum class and reconstruct that spin stratum class.

\subsection{Resolving the residue conditions $\mathfrak{R}$ of a generalised stratum}\label{sec:resolve_res}

Assume that we are given a signature $\mu$ and the spin stratum class of the level strata in the spaces~$\overline{\mathcal{M}}_{\Delta,\top}$ and $\overline{\mathcal{M}}_{\Delta,\bot}$. Then by applying the Clutching Pullback Formula~\eqref{eq:excess_p} on each one-edge graph, we get a system of linear equations. According to Proposition~\ref{prop:arbarello}, this system of linear equations has a unique solution which is the spin stratum class. Hence, it is crucial to compute the spin stratum class of a level stratum. 

 Due to the global residue conditions, the generalised stratum associated to the lower level can have residue conditions. For such a (spin) stratum class, it is not possible to use the method of solving a system of linear equations from clutching pullbacks. Namely, the codimension of the (spin) stratum class is too high such that it is outside the range in Proposition~\ref{prop:arbarello}. Nevertheless, the (spin) stratum class of a stratum with residue conditions is closely related to the (spin) stratum class of its ambient stratum which has one residue condition less. We can actually recursively compute the spin stratum class of a stratum with residue conditions from the spin stratum class of a stratum with no residue conditions.




Our main tools to resolve the residue conditions of a compactified generalised stratum are the following two propositions which are originally from \cite{sauvaget2019cohomology}, and proved in \cite{costantini2020chern} for the versions on the compactified generalised stratum. 

Let $\boldsymbol{g},\boldsymbol{n},\boldsymbol{\mu}$ be tuples of integers. On a (projectivized) generalised stratum $\pmoxrmoduli$ that admits spin structure, every (algebraic/cohomological) class can be written as the sum of summands carrying the even and odd spins. Hence, we can in general define the spin variant of a class. For example, let $\xi$ be the class $c_1(\mathcal{O}(1))$ on $\pmoxrmoduli$, and $\psi_{(\nu,i)}$ be the $\psi$-class at the $i$-th marked point on the component $\nu$. We write $\xi^{\spin}$ and $\psi^{\spin}_{(\nu,i)}$ for the spin variants of $\xi$- and $\psi$-classes.

Assume that after we remove one residue condition from the set of residue conditions inducing $\mathfrak{R}$, the new residue space $\mathfrak{R}_0$ is strictly larger than conditions $\mathfrak{R}_0$, then $\pmoxrmoduli$ will be a subvariety of codimension one in $\mathbb{P}\Xi\overline{\mathcal{M}} ^{\mathfrak{R}_0}_{\boldsymbol{g},\boldsymbol{n}}(\boldsymbol{\mu}) $. 

\begin{prop}
\label{prop:res_resl}
	The class of the stratum $\pmoxrmoduli$ with residue condition $\mathfrak{R}$ compares inside the Chow ring of the generalized stratum $\overline{B}= \mathbb{P}\Xi\overline{\mathcal{M}} ^{\mathfrak{R}_0}_{\boldsymbol{g},\boldsymbol{n}}(\boldsymbol{\mu}) $ to the class~$\xi$ by the formula
	\[[\pmoxrmoduli]=-\xi -\sum_{\Delta\in\LG^\mathfrak{R}_1(\overline{B})}\ell_\Delta [D_\Delta] \]
	where $\LG^\mathfrak{R}_1(\overline{B}) $ is the set of two-level graphs such that the removed residue condition from $\mathfrak{R}$ will induce no extra condition on the top level. In particular, if both $\pmoxrmoduli$ and $\overline{B}$ admit spin structure, then we have
    	\[[\pmoxrmoduli]^{\spin}=-\xi^{\spin} -\sum_{\Delta\in\LG^\mathfrak{R}_1(\overline{B})}\ell_\Delta [D_\Delta]^{\spin}\]
\end{prop}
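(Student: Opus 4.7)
The plan is to realize $[\pmoxrmoduli]$ inside $\overline{B}$ as the divisor of a natural meromorphic section of a tautological line bundle. The residue space $\mathfrak{R}$ differs from $\mathfrak{R}_0$ by exactly one linear equation of the form $R := \sum_{(\nu,i)\in\lambda} r_{\nu,i} = 0$ in the residues at a distinguished subset of poles. Since the residue at a pole is linear in the differential, $R$ descends to a meromorphic section of a tautological line bundle on $\overline{B}$; its first Chern class equals $-\xi$ in the sign convention of the paper, and on the interior of $\overline{B}$ the vanishing locus of $R$ is transversely cut out and agrees with $\pmoxrmoduli$.

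The main computation is then to analyze the behavior of $R$ along each boundary divisor $D_\Delta$ with $\Delta\in\LG_1(\overline{B})$. Working in the plumbing coordinates around $D_\Delta$ supplied by the multi-scale compactification \cite{bainbridge2019moduli}, the level-rescaling action forces the universal differential to be rescaled by an amount controlled by the enhancements, so that the residue function acquires vanishing order equal to $\ell_\Delta$ transverse to $D_\Delta$ precisely when the removed residue equation still makes sense on the top-level stratum, that is, when $\Delta\in\LG_1^\mathfrak{R}(\overline{B})$. For every other $\Delta$ the rescaling forces $R$ to vanish identically along the top-level factor of $D_\Delta$ and the corresponding divisor contribution disappears. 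Assembling the interior identification with these boundary orders yields
\[\divisor(R) = [\pmoxrmoduli] + \sum_{\Delta\in\LG_1^\mathfrak{R}(\overline{B})} \ell_\Delta[D_\Delta] = -\xi,\]
which is the claimed formula after rearrangement. The technical heart of this step is the plumbing bookkeeping that extracts the coefficient $\ell_\Delta$; it is essentially the content of the short exact sequence of normal bundles used in Proposition 7.2 of \cite{costantini2020chern}, and is where I expect the principal difficulty to lie.

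The spin variant is then essentially formal. The section $R$ is defined on the entire $\overline{B}$ and does not mix the spin components; moreover, as emphasized in the introduction, the closures of the spin components in the multi-scale compactification are disjoint. Hence the divisor identity restricts independently to $\overline{B}^{+}$ and $\overline{B}^{-}$, and applying the linear spin operator $\bullet^{\spin}$ termwise preserves the identity. One obtains
\[[\pmoxrmoduli]^{\spin} = -\xi^{\spin} - \sum_{\Delta\in\LG_1^\mathfrak{R}(\overline{B})} \ell_\Delta[D_\Delta]^{\spin},\]
with $[D_\Delta]^{\spin}$ well-defined thanks to Proposition~\ref{prop:1.1}, which supplies the spin decomposition of $D_\Delta$ used in the definition preceding Corollary~\ref{cor:1.3}.
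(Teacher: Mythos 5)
The paper does not actually prove Proposition~\ref{prop:res_resl}: the non-spin formula is cited from \cite{sauvaget2019cohomology} and \cite{costantini2020chern}, and the spin variant is asserted to follow. Your proposal therefore goes further than the paper, reconstructing the proof of the cited result; the overall strategy (realize the removed residue equation $R$ as a section of a tautological line bundle, identify its interior zero locus with $\pmoxrmoduli$, and read off boundary orders in the plumbing coordinates supplied by the multi-scale compactification) is exactly the argument from the references. Your treatment of the spin variant is also correct and fills in what the paper leaves implicit: since the inclusion $\pmoxrmoduli\hookrightarrow\overline{B}$ preserves spin parity and the spin components of $\overline{B}$ are disjoint, the non-spin identity restricts to each spin component, and $\bullet^{\spin}$ applied termwise gives the claimed formula.

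There is, however, one genuinely backwards sentence in the boundary analysis. You write that for $\Delta\notin\LG_1^{\mathfrak{R}}(\overline{B})$ ``the rescaling forces $R$ to vanish identically along the top-level factor of $D_\Delta$ and the corresponding divisor contribution disappears.'' The first clause contradicts the second: if $R$ vanished identically along $D_\Delta$ then $D_\Delta$ would be a component of $\divisor(R)$ with positive multiplicity. The correct statement is the opposite. By definition, $\Delta\notin\LG_1^{\mathfrak{R}}(\overline{B})$ means the removed residue equation \emph{does} impose a nontrivial condition on the top-level stratum of $D_\Delta$ (equivalently, $R|_{\top}$ is not identically zero on that stratum, either because a marked pole from the residue equation sits on the top level or because the global residue condition does not already force the vanishing). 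Hence $R$ restricted to $D_\Delta$ is generically non-vanishing, which is why $D_\Delta$ does not appear in $\divisor(R)$. Conversely, for $\Delta\in\LG_1^{\mathfrak{R}}(\overline{B})$ one has $R|_{\top}\equiv 0$, so $R$ near $D_\Delta$ is controlled entirely by the bottom-level rescaling and the plumbing corrections, which vanish to order $\ell_\Delta$ in the transverse coordinate; this is the step where the short exact sequence of normal bundles of \cite{costantini2020chern} is used. With this correction, the identity $\divisor(R)=[\pmoxrmoduli]+\sum_{\Delta\in\LG_1^{\mathfrak{R}}(\overline{B})}\ell_\Delta[D_\Delta]$ holds, and equating with the first Chern class of the line bundle carrying $R$ gives the formula.

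A minor side remark: the equality $\divisor(R)=-\xi$ requires the convention $\xi=c_1(\mathcal{O}(-1))$ (the tautological subbundle), in which case $R$ is a section of $\mathcal{O}(1)$ of Chern class $-\xi$; the paper's phrase ``$\xi=c_1(\mathcal{O}(1))$'' appears to be a slip of sign. Your hedge ``in the sign convention of the paper'' is therefore prudent.
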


The following proposition tells us that $\xi$-class can actually be expressed as $\psi$-class plus boundary terms. This enable us easily compute the projection pushforward of the (spin) fundamental class of a stratum to $\overline{\mathcal{M}}_{g,n}$.

\begin{prop}\label{prop:xi}
The class $\xi$ on $\overline{B}=\pmoxrmoduli$ can be expressed as 
\[\xi=(m_{\nu,i}+1)\psi_{(\nu,i)}-\sum_{\Delta\in {}_{(\nu,i)}\LG_1(\overline{B})} \ell_\Delta [D_\Delta]\]
where 	${}_{(\nu,i)}\LG_1(\overline{B}) $ is the set of two-level graphs with the leg $(\nu,i)$ on the lower level; $m_{\nu,i}$ is the order of singularity at the leg $(\nu,i)$. In particular, if $\overline{B}$ admits spin structure, then 

\[\xi^{\spin}=(m_{\nu,i}+1)\psi_{(\nu,i)}^{\spin}-\sum_{\Delta\in {}_{(\nu,i)}\LG_1(\overline{B})} \ell_\Delta [D_\Delta]^{\spin}\]
\end{prop}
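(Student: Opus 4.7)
The plan is to realize the proposition as a Chern class identity arising from a canonical morphism of line bundles on $\overline B$. Let $\omega_\nu$ denote the differential on the component containing the marked point $(\nu,i)$. On the open stratum $\mathbb{P}\Omega\mathcal{M}$, the differential has a local expansion
\[\omega_\nu = c\, z^{m_{\nu,i}}\bigl(1 + O(z)\bigr)\,dz\]
at $(\nu,i)$. A direct transformation calculation under $z=f(w)$ gives $c \mapsto f'(0)^{m_{\nu,i}+1} c$, which identifies the leading coefficient $c$ intrinsically as a section of $\mathbb{L}_{(\nu,i)}^{\otimes (m_{\nu,i}+1)}$, i.e.\ as the evaluation $\omega_\nu|_{(\nu,i)} \in \omega_X(-m_{\nu,i}\,(\nu,i))|_{(\nu,i)}$. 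Since the tautological bundle $\mathcal{O}(1)$ has fibre $\mathbb{C}\omega_\nu$, the assignment $\omega_\nu \mapsto c$ defines a morphism
\[\mathrm{ev}_{(\nu,i)}\colon \mathcal{O}(1)\longrightarrow \mathbb{L}_{(\nu,i)}^{\otimes (m_{\nu,i}+1)},\]
which is an isomorphism on the open stratum, where the prescribed order of vanishing at $(\nu,i)$ is exactly attained.

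Next I would extend $\mathrm{ev}_{(\nu,i)}$ across the boundary of $\overline B$ and compute its zero divisor. On a boundary divisor $D_\Delta$ for which $(\nu,i)$ stays on level zero, the top-level differential remains generically unscaled, so $\mathrm{ev}_{(\nu,i)}$ is an isomorphism in a neighbourhood of $D_\Delta$. On a divisor $D_\Delta$ for which $(\nu,i)$ lies strictly below level zero, the plumbing model of $\mathbb{P}\Xi\overline{\mathcal{M}}_{g,n}(\mu)$ recalled in Section~\ref{sec:multi-scale} rescales the component differential carrying $(\nu,i)$ by the smoothing parameter raised to a power which, after taking into account the action of the reduced level rescaling group $\mathbb{C}^{L-1}$, equals exactly $\ell_\Delta$ (the l.c.m.\ of the enhancements). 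Consequently $c$, and hence $\mathrm{ev}_{(\nu,i)}$, vanishes along $D_\Delta$ to order $\ell_\Delta$. This is the same style of local analysis as in the proof of Proposition~\ref{prop:res_resl}, transcribed from the residue side to the zero/pole side of the marked point. The resulting line-bundle identity
\[\mathcal{O}(1)\ \simeq\ \mathbb{L}_{(\nu,i)}^{\otimes (m_{\nu,i}+1)}\Bigl(-\sum_{\Delta\,\in\,{}_{(\nu,i)}\LG_1(\overline B)}\ell_\Delta\, D_\Delta\Bigr)\]
yields the claimed formula on taking first Chern classes.

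For the spin version, the bundles $\mathcal{O}(1)$ and $\mathbb{L}_{(\nu,i)}$, as well as the morphism $\mathrm{ev}_{(\nu,i)}$, are intrinsic to $\overline B$ and hence restrict separately to each spin component $\overline B^{\pm}$. The same divisor-class identity therefore holds on each spin component, and taking the difference gives the claimed identity for $\xi^{\spin}$.

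The main obstacle I expect is the boundary calculation: pinning down that the vanishing order along $D_\Delta$ is exactly $\ell_\Delta$ requires careful bookkeeping of how the (reduced) level rescaling group acts on the bottom-level differential carrying $(\nu,i)$, together with the identification of a transverse coordinate to $D_\Delta$ in the plumbing family. Once this multiplicity is verified, the rest of the argument is a formal line-bundle manipulation.
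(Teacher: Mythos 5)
The paper does not actually supply a proof of this proposition: it cites \cite{sauvaget2019cohomology} and \cite{costantini2020chern} for the non-spin identity and then asserts the spin variant as an immediate consequence. Your proposal reconstructs the proof from \cite{costantini2020chern} in essentially the correct way: the evaluation morphism $\mathrm{ev}_{(\nu,i)}\colon \mathcal{O}(1)\to\mathbb{L}_{(\nu,i)}^{\otimes(m_{\nu,i}+1)}$ given by the leading coefficient at the marked point, an isomorphism over the open stratum, with zero divisor $\sum_{\Delta\in{}_{(\nu,i)}\LG_1(\overline B)}\ell_\Delta D_\Delta$, and the Chern-class identity that results. You have correctly isolated the real content — that the vanishing order along $D_\Delta$ is exactly $\ell_\Delta$ — and although you defer it, the heuristic you give (the bottom-level differential is rescaled by the smoothing parameter $t$ raised to a power determined by the level structure, and the order, after accounting for the reduced level rescaling action, is $\ell_\Delta$) is the correct one and is carried out in detail in \cite{costantini2020chern}. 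Your argument for the spin variant is also correct: since $\overline B$ decomposes as the disjoint union of its spin components, the line-bundle identity restricts to each of $\overline B^{\pm}$, and taking the difference of the resulting Chern-class identities gives the formula for $\xi^{\spin}$, $\psi_{(\nu,i)}^{\spin}$, and $[D_\Delta]^{\spin}$. In short, your approach is the same as the cited reference that the paper relies on, and your handling of the spin extension matches the (unstated) argument implicit in the paper.
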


The spin variants of the above propositions require that all the involved strata admit spin structure. However, if we want to resolve a residue relation on a pair of simple poles (i.e. of the form $r_1+r_2=0$), then the ambient stratum will no more admit a spin structure and the spin variant of the above propositions cannot apply. In the recursion of computing the spin stratum classes, there is a case where we cannot directly apply the clutching pullback method so that we have to resolve the residue condition on a pair of simple poles. This will be handled in Proposition~\ref{prop:sim_res_resolve} of Section~\ref{sec:simpole}.

By Proposition~\ref{prop:xi}, Proposition~\ref{prop:res_resl} and Proposition~\ref{prop:sim_res_resolve}, computing the projection pushforward of (the spin variant of) the fundamental class of a generalised stratum with residue conditions can be boiled down to computing stratum classes of smaller genus or fewer markings. Hence, now we basically have all the ingredients one would expect to carry out the recursive computation of the spin stratum classes. However, in fact, we still have some problems in some special cases if we really start the recursion. These will be handled in the next two subsections.

\subsection{Clutching pullback of $\Gamma_0$}\label{sec:non_comp}

In this subsection, we will explain why we in general want to avoid computing the clutching pullback with respect to the self-loop graph $\Gamma_0$. To compute the clutching pullback $\xi_{\Gamma_0}$ of the (spin) stratum class of signature $\mu$, we inevitably need to know the stratum class $p_*[\mathbb{P}\Xi\overline{\mathcal{M}}_{g-1,n+2}^\mathfrak{R}(\mu')]\in \mathrm{H}^{2g}(\overline{\mathcal{M}}_{g-1,n+2})$, where $\mu'=(m_1,...,m_n,-1,-1)$ and $\mathfrak{R}$ is induced by the residue condition $r_{n+1}+r_{n+2}=0$. 

If we recursively compute the clutching pullback of a stratum class with respect to $\Gamma_0$ , we will get into a bottleneck. Namely, we will end up with some level stratum that actually represents a complicated horizontal level graph. The following example may give some insight into it.

\begin{exa}\label{exa:recur_clutch_0}
Let $\mu=(6)$. Then by repeating the clutching pullback of non-separable $\Gamma$, we will end up computing the spin stratum class $p_*[\mathbb{P}\Xi\overline{\mathcal{M}}_{0,9}^\mathfrak{R}(6,-1^8)^{\spin}]$, where $\mathfrak{R}$ consists of the residue conditions
\[r_2+r_3=0,\quad r_4+r_5=0, \quad r_6+r_7=0 \]
Notice that the residue conditions above imply $r_8+r_9=0$. One of the vertical two-level graphs of that stratum will be as the following:
\begin{center}
    \begin{tikzpicture}
    
        \node (a) at (-1,0) [circle,draw,fill,inner sep=0pt,minimum size=3pt]{};
    \node (b) at (1,-2) [circle,draw,fill,inner sep=0pt,minimum size=3pt]{};
    \node (c) at (3,0) [circle,draw,fill,inner sep=0pt,minimum size=3pt]{};
    \path [-] (a) edge (b);
    \path [-] (c) edge (b);
    \path [-] (a) edge (-1.7,0.1);
    \path [-] (a) edge (-1.5,0.5);
    \path [-] (a) edge (-0.3,0.1);
    \path [-] (a) edge (-0.5,0.5);
    \path [-] (c) edge (2.3,0.1);
    \path [-] (c) edge (2.5,0.5);
    \path [-] (c) edge (3.7,0.1);
    \path [-] (c) edge (3.5,0.5);
    \node at (-1.8,0.6){$-1_2$} ;
    \node at (-0.2,0.6){$-1_4$} ;
    \node at (-1.9,0.1){$-1_6$} ;
    \node at (-0.1,0.1){$-1_8$} ;
    \node at (2.2,0.6){$-1_3$} ;
    \node at (3.8,0.6){$-1_5$} ;
    \node at (2.1,0.1){$-1_7$} ;
    \node at (3.9,0.1){$-1_9$} ;
    \node at (-0.7,-0.2){$2$} ;
    \node at (2.7,-0.2){$2$} ;
    \node at (0.6,-1.3){$-4$} ;
    \node at (1.4,-1.3){$-4$} ;
    \path [-] (b) edge (1,-2.3);
    \node at (1.1,-2.4){$6$};
    \end{tikzpicture}
\end{center}
 The top level stratum illustrates the following horizontal level graph

 \begin{center}
    \begin{tikzpicture}
    
        \node (a) at (-1,0) [circle,draw,fill,inner sep=0pt,minimum size=3pt]{};
   
    \node (c) at (3,0) [circle,draw,fill,inner sep=0pt,minimum size=3pt]{};

    \draw (a) .. controls (1,-0.5).. (c) ;
     \draw (a) .. controls (1,0.5).. (c) ;
     \draw (a) .. controls (1,1).. (c) ;
      \draw (a) .. controls (1,-1).. (c) ;
    \node at (-1.3,-0.7){$2$} ;
    \node at (3.3,-0.7){$2$} ;
    \path [-] (a) edge (-1.2,-0.5);
    \path [-] (c) edge (3.2,-0.5);
    \end{tikzpicture}
\end{center}
for which we do not have a method to reduce the computation of the spin stratum class to the computation of the spin stratum class of each component.

\end{exa}
 
However, we expect that we will not always need to consider the clutching pullback of $\Gamma_0$. In the practice of application of Theorem~\ref{prop:arbarello} we observe that in actual computation for $n\geq 3$ the common kernel of all clutching pullbacks except that with respect to $\Gamma_0$ is already trivial. Although we cannot give a proof of Assumption~\ref{prop:assum}, we can propose a recursion scheme based on this assumption.

Nevertheless, we still need to handle the spin stratum classes of strata of differentials that have simple poles. Now if we want to construct the recursion, then the base cases of our recursion will be for $g=0$:
\begin{itemize}
    \item[(i)] $\mu$ does not contain any $-1$,
    \item[(ii)] or $\mu=(2k,-2k,-1,-1)$, 
    \item[(iii)] or $\mu=(0,-1,-1)$.
\end{itemize}
Case (i) is obvious. Since there is no symplectic basis on a sphere, the spin will just be taken as $0$. Case (ii) and (iii) will be discussed in the next subsection based on the perspective of flat surfaces.

\subsection{Spin stratum classes of signatures containing simple poles}\label{sec:simpole}

In this subsection, we will show how to handle the stratum of even type up to pairs of simple poles with residue condition relating them. According to \cite{boissy2015connected}, we can view a meromorphic differential as a non-compact flat surface, i.e. a surface obtained by glueing infinite half-cylinders and infinite half-planes on $\mathbb{R}^2$ such that there is no boundary. 

A pair of simple poles with the residue condition $r_1+r_2=0$ relating them can be visualized as two infinite half-cylinders of the same width but in opposite direction. Any flat surface $(X,\omega)$ with simple poles (which are paired up by residue conditions) can canonically form a flat surface $(X',\omega')$ of larger genus by cutting and glueing the half infinite cylinders. If the signature $\mu$ contains only even numbers and $-1$s (paired up by residue conditions), then the \emph{spin parity} of $(X,\omega)$ is defined to be the spin parity of $(X',\omega')$. The following two propositions are some easy results on the spin stratum classes of stratum that has genus $0$.

\begin{prop}\label{prop:base_case1}
The flat surface of signature $\mu=(0,-1,-1)$ has odd parity.
\end{prop}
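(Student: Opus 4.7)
The plan is to unfold the definition of spin parity for a meromorphic differential with paired simple poles and reduce to the well-understood case of a flat torus. First I would make the underlying flat surface concrete: up to $\mathrm{PGL}_2$-change of coordinate and scaling, the only meromorphic differential on $\mathbb{P}^1$ with orders $(0,-1,-1)$ at three distinguished points is $\omega = \frac{dz}{z}$ on $\mathbb{P}^1\setminus\{0,\infty\}$, with the order-zero marked point placed at an arbitrary regular point of $\mathbb{C}^*$. As a non-compact flat surface, $(X,\omega)$ is therefore a bi-infinite Euclidean cylinder of circumference $2\pi|r|$ (where $\pm r$ are the residues at the two simple poles), carrying one marked regular point.

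Next I would apply the cut-and-glue recipe described just before the statement, which defines the spin parity in the presence of paired simple poles: truncate the two half-infinite cylinders associated with the poles at some finite height and identify their boundary circles by an orientation-reversing isometry compatible with the residue condition $r_1+r_2=0$. The resulting compact flat surface $(X',\omega')$ has genus $1$; since $\omega$ had no zeros and the single order-zero marked point is regular, $\omega'$ is a nowhere-vanishing holomorphic $1$-form. Hence $(X',\omega')$ is a translation torus $\mathbb{C}/\Lambda$ with $\omega' = dz$.

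To compute the spin parity, I would pick the standard symplectic basis $\{a,b\}$ of $H_1(X',\mathbb{Z})$ realised by two closed geodesics of the flat torus, namely a meridian of the closed-up cylinder and a longitude running once along it. Both are straight lines in the flat metric, so $\ind(a)=\ind(b)=0$, and the Arf-invariant formula~\eqref{eq:parity} yields
\[
\phi\bigl((X,\omega)\bigr) \;=\; \phi\bigl((X',\omega')\bigr) \;=\; \bigl(\ind(a)+1\bigr)\bigl(\ind(b)+1\bigr) \;\equiv\; 1 \pmod 2,
\]
which is odd, as asserted. I do not anticipate any substantive obstacle: the only technical point is to check that the choice of truncation height is immaterial, but changing it amounts to precomposing the identification with a cylinder twist, which preserves the homotopy classes of $a$ and $b$ as geodesic loops and hence preserves all turning numbers modulo~$2$.
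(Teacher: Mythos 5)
Your proof is correct and follows the same route as the paper: cut and glue the two half-infinite cylinders to obtain a flat torus, then observe that a flat torus always has odd spin parity. You have simply filled in the details (the explicit model $\omega=dz/z$, the vanishing turning numbers of the straight geodesic basis, and the independence of truncation height) that the paper leaves implicit.
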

\begin{proof}
    After cutting and glueing the infinite half-cylinders, we get a flat torus. A flat torus always has odd parity.
\end{proof}

\begin{prop}\label{prop:base_case2}
\label{prop:genus0}
Let $k\in \mathbb{Z}_{\geq 0}$. The spin stratum class of $\overline{\mathcal{H}}^\mathfrak{R}(2k,-2k,-1,-1)$, where $\mathfrak{R}$ is induced by the residue condition $r_3+r_4=0$, is equal to $\psi_1$.
\end{prop}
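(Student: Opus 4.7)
The plan is a direct computation, leveraging the one-dimensionality of $\overline{\mathcal{M}}_{0,4}\cong\mathbb{P}^1$. I first parametrize the stratum explicitly, then reduce the class to a signed point count via the structure of $H^2(\overline{\mathcal{M}}_{0,4};\mathbb{Q})$, and finally analyze the spin parity at each point via the gluing construction of Section~\ref{sec:simpole}.

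In cross-ratio coordinates $(p_1,p_2,p_3,p_4)=(0,\infty,1,\lambda)$ on $\overline{\mathcal{M}}_{0,4}$, the unique (up to scaling) meromorphic differential with prescribed divisor $2kp_1 - 2kp_2 - p_3 - p_4$ on $\mathbb{P}^1$ is $\omega_\lambda = z^{2k}/((z-1)(z-\lambda))\,dz$. A direct computation of the residues at $z = 1$ and $z = \lambda$ shows that the condition $r_3 + r_4 = 0$ is equivalent to $\lambda^{2k} = 1$. For $k\geq 1$, the stratum $\overline{\mathcal{H}}^{\mathfrak{R}}(2k,-2k,-1,-1)$ therefore consists of the $2k - 1$ nontrivial $2k$-th roots of unity, each a reduced point of $\overline{\mathcal{M}}_{0,4}$. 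Since $H^2(\overline{\mathcal{M}}_{0,4};\mathbb{Q})\cong\mathbb{Q}$ is generated by $\psi_1$ with $\int \psi_1 = 1$, the spin stratum class must equal $c\cdot\psi_1$, where $c = \int [\overline{\mathcal{H}}^{\mathfrak{R}}]^{\spin}$ is the signed count of stratum points (with weight $+1$ for even spin and $-1$ for odd).

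For each admissible $\lambda$, the gluing of the two simple poles produces a genus-one flat surface $(E_\lambda,\eta_\lambda)$ of signature $(2k,-2k)$, where $p_1$ remains a zero of order $2k$ and $p_2$ a pole of order $2k$. The associated theta characteristic is $L_\lambda = \mathcal{O}_{E_\lambda}(k(p_1-p_2))$, which satisfies $L_\lambda^{\otimes 2}\cong \omega_{E_\lambda}\cong\mathcal{O}_{E_\lambda}$; in particular $p_1 - p_2\in E_\lambda[2k]$. The spin is odd precisely when $L_\lambda$ is trivial, i.e.\ when $p_1 - p_2\in E_\lambda[k]$. Tracking the period lattice of $E_\lambda$ through the flat gluing, with cylinder width coming from the common residue $|r_3|$ and twist from the prong-matching, gives an explicit formula for the position of $p_1 - p_2$ in $E_\lambda[2k]$ as a function of $\lambda$. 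A case analysis shows that exactly $k$ of the $2k - 1$ admissible values yield even spin and $k - 1$ yield odd spin, so $c = k - (k-1) = 1$.

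The main obstacle is the last counting step: explicitly identifying which $2k$-th roots of unity correspond to $k$-torsion positions of $p_1 - p_2$ in the glued elliptic curve. A clean way to finish is to exhibit the $\mathbb{Z}/(2k)$-action on prong-matching equivalence classes (equivalently, on the stratum itself via $\lambda\mapsto \zeta_{2k}\lambda$) and argue that it intertwines with translation by a generator of $E_\lambda[2k]/E_\lambda[k]\cong\mathbb{Z}/2$, thereby reducing the count to an orbit argument; alternatively one performs the period-integral computation directly. The degenerate case $k = 0$ has $\overline{\mathcal{H}}^{\mathfrak{R}} = \overline{\mathcal{M}}_{0,4}$ with uniformly odd spin, so the statement lives in the wrong cohomological degree for a naive interpretation and must be handled separately, either by a convention on the spin stratum class in codimension zero or by regarding the stratum class as decorated with the normal direction in which the residue condition imposes a nontrivial cut.
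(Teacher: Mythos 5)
Your approach is genuinely different from the paper's. You parametrize the zero-dimensional stratum explicitly in cross-ratio coordinates with the differential $\omega_\lambda = z^{2k}\,dz/((z-1)(z-\lambda))$, deduce that the residue condition is $\lambda^{2k}=1$, and then determine the parity via the theta-characteristic criterion $h^0(\mathcal{O}_{E_\lambda}(k(p_1-p_2)))\bmod 2$ on the elliptic curve $E_\lambda$ obtained by gluing the two half-infinite cylinders. The paper instead works directly with Boissy's flat realization of the noncompact flat surface by $2(2k-1)$ half-planes and two half-infinite cylinders and reads off the signed count from the flat picture. Both reductions to a signed point count on a zero-dimensional stratum are sound, and your equivalence ``odd spin $\Leftrightarrow$ $p_1-p_2\in E_\lambda[k]$'' is correct.

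However, there is a genuine gap, and you flag it yourself: the numerical core of the argument --- that exactly $k$ of the $2k-1$ nontrivial $2k$-th roots of unity give even spin and $k-1$ give odd --- is asserted, not proved. Neither of the finishing strategies you sketch closes the gap as stated. The orbit argument based on $\lambda\mapsto\zeta_{2k}\lambda$ does not directly make sense: that rotation sends $\zeta_{2k}^{2k-1}$ to the excluded value $\lambda=1$, and more importantly it is not obviously a map of the relevant flat surfaces, since it changes the residues $r_3,r_4$ and hence the widths of the cylinders being glued; the claimed intertwining with translation on $E_\lambda[2k]/E_\lambda[k]$ would itself need an argument. The alternative period-integral computation is precisely the substantive step, and it is left undone. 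A secondary point you should address explicitly: the gluing of the two half-cylinders carries a free twist parameter in addition to $\lambda$, so you need to invoke why the resulting spin parity is independent of that twist --- this is implicit in the discussion of canonical gluing in Section~\ref{sec:simpole}, but as written your argument tacitly assumes it. Your observation about $k=0$ is legitimate (there the residue condition is vacuous and the claimed equality lives in the wrong cohomological degree), but it is tangential; the essential missing piece is the count for $k\geq 1$.
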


\begin{proof}
First, note that $\mathbb{P}\Omega\mathcal{M}_{0,4}^\mathfrak{R}(2k,-2k,-1,-1)$ has dimension $0$. Hence, we just need to count the number of such flat surfaces of even parity and of odd parity. Consider a flat surface corresponding to the signature $(2k,-2k,-1,-1)$ with residue space $\mathfrak{R}$. Then due to the realization of non-compact flat surfaces by half-planes and cylinders that were constructed in \cite{boissy2015connected}, we have $2(2k-1)$ half planes and $2$ half- infinite cylinders of opposite directions and the same width. Then by computing the spin parities of such flat surfaces where the half infinite cylinders are glued to different half-planes, we will know that the number of flat surfaces of even spin will be larger than that of odd spin by $1$.
\begin{center}
    \begin{tikzpicture}
\draw[step=.5cm,gray,very thin] (2.9,-1.4) grid (14.2,2);
\fill[green!20!white] (5,0) arc (0:180:10mm) -- (5,0);
\fill[green!20!white] (7,0) arc (0:-180:10mm) -- (7,0);
\fill[green!20!white] (10,0) arc (0:180:10mm) -- (10,0);
\fill[green!20!white] (12,0) arc (0:-180:10mm) -- (12,0);
\node at (3.75,0) [circle,draw,inner sep=0pt,minimum size=3pt]{};
\node at (4.25,0) [circle,draw,inner sep=0pt,minimum size=3pt]{};
\node at (6,0) [circle,draw,inner sep=0pt,minimum size=3pt]{};
\node at (9,0) [circle,draw,inner sep=0pt,minimum size=3pt]{};
\node at (10.75,0) [circle,draw,inner sep=0pt,minimum size=3pt]{};
\node at (11.25,0) [circle,draw,inner sep=0pt,minimum size=3pt]{};
\draw[very thick,dotted] (7.2,0) -- (7.8,0);
\draw[very thick] (3,0) -- (3.65,0) node [above,text width=0.4cm,midway]{$a_1$};
\draw[very thick] (5,0) -- (5.9,0) node [below,text width=0.4cm,midway]{$a_3$};
\draw[very thick] (8,0) -- (8.9,0) node [above,text width=0.4cm,midway]{$a_{4k-3}$};
\draw[very thick] (10,0) -- (10.65,0) node [below,text width=0.4cm,midway]{$a_{1}$};
\draw[very thick] (4.35,0) -- (5,0) node [above,text width=0.4cm,midway]{$a_2$};
\draw[very thick] (6.1,0) -- (7,0) node [below,text width=0.4cm,midway]{$a_2$};
\draw[very thick] (3.85,0) -- (4.15,0) node [below,text width=0.4cm,midway]{$b$};
\draw[very thick] (10.85,0) -- (11.15,0) node [above,text width=0.4cm,midway]{$c$};
\draw[very thick] (9,0) -- (10,0) node [above,text width=0.4cm,midway]{$a_{4k-2}$};
\draw[very thick] (11.35,0) -- (12,0) node [below,text width=0.4cm,midway]{$a_{4k-2}$};

\fill[green!20!white] (12.5,0) -- (13,0) -- (13,-2) -- (12.5,-2) -- (12.5,0);
\node at (12.5,0) [circle,draw,inner sep=0pt,minimum size=3pt]{};
\node at (13,0) [circle,draw,inner sep=0pt,minimum size=3pt]{};
\draw[very thick] (12.5,0) -- (13,0) node [above,text width=0.4cm,midway]{$b$};
\draw[very thick,blue] (12.5,0) -- (12.5,-2);
\draw[very thick,blue] (13,0) -- (13,-2);
\fill[green!20!white] (13.5,0) -- (14,0) -- (14,2) -- (13.5,2) -- (13.5,0);
\node at (13.5,0) [circle,draw,inner sep=0pt,minimum size=3pt]{};
\node at (14,0) [circle,draw,inner sep=0pt,minimum size=3pt]{};
\draw[very thick] (13.5,0) -- (14,0) node [below,text width=0.4cm,midway]{$c$};
\draw[very thick,red] (13.5,0) -- (13.5,2);
\draw[very thick,red] (14,0) -- (14,2);

    \end{tikzpicture}
  
\end{center}
\end{proof}

In general for the signature $\mu=(a,-b,-1^{2s})$ and a set of residue conditions $\mathfrak{R}$ which pair up the simple poles, in order to compute the spin stratum class, we need to resolve a residue condition pairing two simple poles instead of computing the clutching pullbacks. This is because the codimension of the class will be outside the range of injectivity~\eqref{eq:range_inj} of clutching pullbacks. More generally, if we are dealing with some stratum of differentials
\begin{itemize}
    \item whose signature is of the form $\mu=(\mu',-1^{2s})$, where $\mu'$ is a partition of $2g-2+2s$ by even integers containing some negative entry;
    \item with a set of residue conditions $\mathfrak{R}$ pairing the simple poles up,
\end{itemize} then we can directly express the spin stratum classes in the Chow ring of ambient stratum, which is of the same signature but with a set of residue conditions $\mathfrak{R}_0$ such that the residue condition on the first pair of simple poles is removed. Before we state the following proposition, we want to define some special set of two-level graphs of the ambient stratum $\overline{B}=\mathbb{P}\Xi\overline{\mathcal{M}} ^{\mathfrak{R}_0}_{g,n}(\mu)$. We denote 
\begin{itemize}
    \item by $\LG^{\mathfrak{R}}_1(\overline{B})$ the set of level graphs such that the the first and second simple poles are on the top level and their residue sum is forced to be zero;
    \item by $\LG^{\top}_1(\overline{B})$ (resp. $\LG^{\bot}_1(\overline{B})$) the set of level graphs such that the the first simple pole is on the top (resp. bottom) level while the second simple pole is on the bottom (resp. top) level.
\end{itemize}

\begin{prop}\label{prop:sim_res_resolve}
Let $\mu=(\mu',-1^{2s})$ be a signature and $\mathfrak{R}$ (resp. $\mathfrak{R}_0$) be a set of residue conditions as we mentioned above. Then the cycle class of $\mathbb{P}\Xi\overline{\mathcal{M}} ^{\mathfrak{R}}_{g,n}(\mu)$ in the Chow ring of $\overline{B}= \mathbb{P}\Xi\overline{\mathcal{M}} ^{\mathfrak{R}_0}_{g,n}(\mu)$ can be expressed as follow: 
\begin{align}\label{eq:sim_res}
    [\mathbb{P}\Xi\overline{\mathcal{M}} ^{\mathfrak{R}}_{g,n}(\mu)]=-\sum_{\Delta\in\LG^\mathfrak{R}_1(\overline{B})}\ell_\Delta [D_\Delta]+\sum_{\Delta\in\LG^{\top}_1(\overline{B})}\ell_\Delta [D_\Delta].
\end{align}

Similarly, the spin stratum class can be expressed as follow:

\begin{align}\label{eq:spin_sim_res}
    [\mathbb{P}\Xi\overline{\mathcal{M}} ^{\mathfrak{R}}_{g,n}(\mu)]^{\spin}=-\sum_{\Delta\in\LG^\mathfrak{R}_1(\overline{B})}\ell_\Delta [D_\Delta]^{\spin},
\end{align}
\end{prop}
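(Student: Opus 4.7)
The plan is to derive \eqref{eq:sim_res} by combining Proposition~\ref{prop:res_resl} and Proposition~\ref{prop:xi}, and then to obtain the spin version \eqref{eq:spin_sim_res} by identifying which boundary contributions survive under the spin decomposition.

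First I apply Proposition~\ref{prop:res_resl} to the codimension-one embedding $\mathbb{P}\Xi\overline{\mathcal{M}}^{\mathfrak{R}}_{g,n}(\mu) \hookrightarrow \overline{B}$, obtaining
\[[\mathbb{P}\Xi\overline{\mathcal{M}}^{\mathfrak{R}}_{g,n}(\mu)] = -\xi - \sum_{\Delta \in \LG^\mathfrak{R}_1(\overline{B})} \ell_\Delta [D_\Delta],\]
where I interpret the index set in the general sense of Proposition~\ref{prop:res_resl}: two-level graphs such that the omitted residue condition $r_1 + r_2 = 0$ imposes no additional constraint on the top level. A case analysis shows this general set is the disjoint union of the refined family $\LG^\mathfrak{R}_1(\overline{B})$ appearing in the statement (both simple poles on the top level, with residue sum forced to zero by other GRC conditions) together with the graphs in which both simple poles lie on the bottom level (so the omitted condition trivially lives below the top level).

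Next I apply Proposition~\ref{prop:xi} with $(\nu, i)$ chosen to be the second simple pole. Since $m_{\nu, i} = -1$, the coefficient of $\psi_{(\nu,i)}$ vanishes and the proposition gives
\[\xi = -\sum_{\Delta \in {}_{(\nu,i)}\LG_1(\overline{B})} \ell_\Delta [D_\Delta],\]
where the sum runs over two-level graphs with the second simple pole on the bottom level. This index set is precisely the disjoint union of $\LG^\top_1(\overline{B})$ and the "both simple poles on bottom" graphs. Substituting into the previous equation, the "both on bottom" contributions appear on both sums with opposite signs and cancel, producing exactly \eqref{eq:sim_res}.

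For the spin version \eqref{eq:spin_sim_res}, the key assertion is that the $\LG^\top_1$ sum drops out after taking the spin variant. For each $\Delta \in \LG^\top_1(\overline{B})$, exactly one unpaired simple pole lies on the top level and one on the bottom; hence neither the top nor the bottom level stratum has a signature of even type, and neither individually admits a spin decomposition. The main obstacle is to turn this structural observation into a rigorous vanishing $[D_\Delta]^{\spin} = 0$ in the Chow ring. The route I would take is to intersect $D_\Delta$ with $\mathbb{P}\Xi\overline{\mathcal{M}}^{\mathfrak{R}}_{g,n}(\mu)$ and invoke the welded-surface machinery of Section~\ref{sec:welded} together with the half-and-half prong-matching principle of Proposition~\ref{prop:1.1}: the residue pairing $r_1 + r_2 = 0$ across the two levels plays the role of an even-enhancement feature, so an analogue of the prong-rotation argument should toggle the spin parity while leaving the underlying twisted differential fixed, forcing the even and odd loci to have equal cycle classes. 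Granting this vanishing, taking the spin variant of \eqref{eq:sim_res} term by term immediately yields \eqref{eq:spin_sim_res}.
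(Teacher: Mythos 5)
Your approach to \eqref{eq:sim_res} differs from the paper's. The paper constructs the extension $f:\overline{B}\to\mathbb{P}^1$ of $(X,\eta)\mapsto[-\res_{p_1}(\eta):\res_{p_2}(\eta)]$ and reads off \eqref{eq:sim_res} directly from the rational equivalence $[f^{-1}(1)]=[f^{-1}(\infty)]$, keeping track of the multiplicities $\ell_\Delta$. Your route through Propositions~\ref{prop:res_resl} and \ref{prop:xi} plus a cancellation of the ``both simple poles on the bottom'' terms is plausible and, if the index-set decomposition you assert is verified carefully, gives an independent derivation of the non-spin formula. That part is a legitimate alternative.

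The spin statement \eqref{eq:spin_sim_res} is where there is a genuine gap, and your own phrasing already signals it. You want to ``take the spin variant of \eqref{eq:sim_res} term by term,'' reducing the claim to $[D_\Delta]^{\spin}=0$ for $\Delta\in\LG^\top_1(\overline{B})$. But the operation $\bullet^{\spin}$ is not a linear map on $\mathrm{CH}^*(\overline{B})$: it is only defined on the Chow ring of a stratum that actually carries a spin structure, and $\overline{B}=\mathbb{P}\Xi\overline{\mathcal{M}}^{\mathfrak{R}_0}_{g,n}(\mu)$ does not (the pair $p_1,p_2$ is deliberately unpaired there). Likewise, for $\Delta\in\LG^\top_1(\overline{B})$ the divisor $D_\Delta$ has $p_1$ and $p_2$ split across the two levels with no residue condition linking them, so neither level stratum is of even type and the symbol $[D_\Delta]^{\spin}$ has no meaning — it is not that the class vanishes, but that it is not a well-formed expression. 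So the reduction ``granting this vanishing, take spin variants term by term'' has no starting point. Your proposed mechanism — treating the residue pairing as an ``even-enhancement feature'' and invoking the prong-rotation half-and-half principle of Proposition~\ref{prop:1.1} — also does not apply: that proposition is about an edge of even enhancement, which toggles the spin via prong rotations; here all the relevant enhancements can be odd, and the phenomenon you need is different in kind. In fact the residue pairing in question is absent on $D_\Delta$ by construction, so there is nothing to serve as the analogue of the even edge.

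What the paper does instead is to upgrade the map $f$ by forming the fiber product of $\overline{B}$ with $q:\mathbb{P}^1\to\mathbb{P}^1$, $z\mapsto z^2$. A monodromy computation on welded/flat surfaces — rotating the loop $\gamma(t)=e^{2\pi it}$ once changes the turning number of the curve $\alpha_{g+1}$ joining the two simple-pole cylinders by $1$ — shows the spin parity flips along a lift of $\gamma$, so after the degree-$2$ base change the two spin pieces of $f^{-1}(1)$ land in distinct irreducible components $\overline{X}^\pm\cong\overline{B}$. Pulling back the rational equivalence $[\rho^{-1}(1)]=[\rho^{-1}(-1)]$ along $\rho:\overline{B}\cong\overline{X}^+\to\mathbb{P}^1$ gives
\[
[\mathbb{P}\Xi\overline{\mathcal{M}}^{\mathfrak{R}}_{g,n}(\mu)^+]+\sum_{\Delta\in\LG^\mathfrak{R}_1(\overline{B})}\ell_\Delta[D_\Delta^+]=[\mathbb{P}\Xi\overline{\mathcal{M}}^{\mathfrak{R}}_{g,n}(\mu)^-]+\sum_{\Delta\in\LG^\mathfrak{R}_1(\overline{B})}\ell_\Delta[D_\Delta^-],
\]
which is \eqref{eq:spin_sim_res} on rearranging. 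Note that the $\LG^\top_1$ divisors never appear in this identity — they sit over $\infty$, not $1$ — so the vanishing you were trying to prove simply never needs to be formulated. That reframing, via the degree-$2$ cover of $\mathbb{P}^1$ and the monodromy computation, is the essential idea your proposal is missing.
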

\begin{proof}
We first prove Equation~\eqref{eq:sim_res}. Let $f:\mathbb{P}\Xi\overline{\mathcal{M}} ^{\mathfrak{R}_0}_{g,n}(\mu)\longrightarrow\mathbb{P}^1$ be the extension of the map  
\begin{align*}
\Tilde{f}:&\mathbb{P}\Omega\mathcal{M} ^{\mathfrak{R}_0}_{g,n}(\mu)\longrightarrow \mathbb{P}^1\\
    &(X,\eta)\mapsto [-\res_{p_1}(\eta):\res_{p_2}(\eta)],
\end{align*}
where $p_1$ and $p_2$ are the first two simple poles. Then it is easy to see that the inverse image of $\{1\}$ is just the union of $\mathbb{P}\Xi\overline{\mathcal{M}} ^{\mathfrak{R}}_{g,n}(\mu)$ and $D_\Delta$ for $D_\Delta\in\LG^\mathfrak{R}_1(\overline{B})$ while the inverse image of $\{\infty\}$ is the union of $D_\Delta$ for $\Delta\in\LG^{\top}_1(\overline{B})$.  Then Equation~\eqref{eq:sim_res} is just obtained by considering the multiplicities of the irreducible components of the inverse image. The multiplicity of $D_\Delta$ is just $\ell_\Delta$ because the residues on the bottom level decays like $t^{\ell_\Delta}$. 

Second, our strategy to prove Equation~\eqref{eq:spin_sim_res} is to construct a map from the ambient stratum to $\mathbb{P}^1$ such that the spin components will be lying in the inverse images of two different points on $\mathbb{P}^1$. Without loss of generality, we will assume $\overline{B}$ is connected (otherwise we can restrict to a connected component of $\overline{B}$). We claim that the generator of the monodromy $\pi_1(\mathbb{P}^1\setminus\{0,\infty\},1)\simeq \mathbb{Z}$ will be lifted to a path in $\overline{B}\setminus f^{-1}(\{0,\infty\})$ connecting two elements of $f^{-1}(1)$ of opposite spin. (We will prove our claim later.) This will lead to the reducibility of the fiber product
\begin{center}
\begin{tikzcd}
\mathcal{F}\arrow[r,"p"]\arrow[d,"\bar{f}"] &\overline{B}\arrow[d,"f"]\\
 \mathbb{P}^1\arrow[r,"q"] & \mathbb{P}^1,
\end{tikzcd}
\end{center}
where $q:z\mapsto z^2$. Indeed, the generator of the monodromy $\pi_1(\mathbb{P}^1\setminus\{0,\infty\},1)\simeq \mathbb{Z}$ via $\bar{f}$ ($\mathbb{P}^1$ on the left hand side) will be lifted to a path in in $\mathcal{F}\setminus \bar{f}^{-1}(\{0,\infty\})$ connecting two elements in $ \bar{f}^{-1}(1)$ of the same spin. Thus, the spin components of the inverse image $\bar{f}^{-1}(1)$ will lie on different connected components of $\mathcal{F}\setminus \bar{f}^{-1}(\{0,\infty\})$. Let $\overline{X^+}$ resp. $\overline{X^-}$ be the irreducible components of $\mathcal{F}$ containing the spin components of $\bar{f}^{-1}(1)$. It is obvious that both $\overline{X^+}$ and $\overline{X^-}$ are isomorphic to $\overline{B}$. Hence, by considering the map
\begin{align*}
    \rho:\overline{B}\overset{\sim}{\longrightarrow} \overline{X}^+\overset{\bar{f}}{\longrightarrow}\mathbb{P}^1,
\end{align*}
we see that $\rho^{-1}(1)$ (resp. $\rho^{-1}(-1)$) is isomorphic to the even (resp. odd) spin component of $f^{-1}(1)$. This implies that 
\begin{align*}
    [\mathbb{P}\Xi\overline{\mathcal{M}} ^{\mathfrak{R}}_{g,n}(\mu)^+]+\sum_{\Delta\in\LG^\mathfrak{R}_1(\overline{B})}\ell_\Delta [D_\Delta^+]=[\mathbb{P}\Xi\overline{\mathcal{M}} ^{\mathfrak{R}}_{g,n}(\mu)^-]+\sum_{\Delta\in\LG^\mathfrak{R}_1(\overline{B})}\ell_\Delta [D_\Delta^-]
\end{align*}
and it yields Equation~\eqref{eq:spin_sim_res}.

Now it remains to show the claim on monodromy. We fix a collection of curves $\{\alpha_1,...,\alpha_{g+s},\beta_1,...,\beta_{g+s}\}$ on the flat surface $(X,\eta)$ (where we assume $X$ is smooth, otherwise we consider the welded surface associated to it) corresponding to an element in $f^{-1}(1)$ such that 
\begin{itemize}
    \item $\alpha_i$ only intersects $\beta_i$ (transversally at one point) and vice versa;
    \item $[\alpha_1],...,[\alpha_g],[\beta_1],...,[\beta_g]$ form a symplectic basis of the underlying closed surface
    \item $\beta_{g+1},...,\beta_{g+s}$ are geodesic closed curves around the simple poles (one simple pole for every pair of simple poles, we denote by $\beta_{g+1}',...,\beta_{g+s}'$ the geodesic closed curves of the opposite simple poles);
    \item $\alpha_{g+1},...,\alpha_{g+s} $ are curves connecting the geodesic closed curves $\beta_j,\beta_j'$, such that they are perpendicular to the $\beta_j,\beta_j'$.
\end{itemize}

Let $w=\sum_{i=1}^{g+s} (\ind(\alpha_i)+1)(\ind(\beta_i)+1)$. Note that $w \pmod 2$ will be the spin parity of $(X,\eta)$. Let $\gamma(t)=e^{2\pi i t}$ (where $t\in[0,1]$) be a loop on $\mathbb{P}^1\setminus\{0,\infty\}$ based at $1$. If we deform $(X,\eta)$ along some lift of $\gamma$ in $\overline{B}$, then we will end up with some $(X', \eta')$ of opposite spin. Indeed, the turning numbers of $\alpha_1,...,\alpha_g,\alpha_{g+2},...,\beta_1,...,\beta_{g+s}$ are locally constant along the deformation path as they take integral values. On the other hand, the turning number of $\alpha_{g+1}$ will change by $1$ as the turning angle of $\alpha_{g+1}$ changes continuously along the the deformation path (see Figure~\ref{fig:turning_angle}). This means that the Arf invariants of the original resp. resulting symplectic basis will differ by $1$. This completes our proof. 

\begin{figure}
    \centering
    \begin{tikzpicture}
      \draw[] (0,0) -- (0,2) node [left,text width=0.4cm,midway]{$a$};
       \draw[] (1,0) -- (1,2) node [right,text width=0.4cm,midway]{$a$};
       \draw[dotted] (0,2) -- (1,2);
       \draw[] (0,-1) -- (0,-3) node [left,text width=0.4cm,midway]{$b$};
       \draw[] (1,-1) -- (1,-3) node [right,text width=0.4cm,midway]{$b$};
       \draw[dotted] (0,-3) -- (1,-3);
       \draw[blue] (0,-2) -- (1,-2) node [below,text width=0.4cm,midway]{$\beta_{g+1}'$};
       \draw[blue] (0,1) -- (1,1) node [above,text width=0.4cm,midway]{$\beta_{g+1}$};
       \draw[red] (0.5,1) -- (0.5,0);
       \draw[red] (0.5,-1) -- (0.5,-2);
       \draw[red,dotted] (0.5,0)-- (0.5,-1) node [right,text width=0.4cm,midway]{$\alpha_{g+1}$};
       \begin{scope}[xshift=2.7cm]
         \draw[] (0,0) -- (0,2) node [left,text width=0.4cm,midway]{$a$};
       \draw[] (1,0) -- (1,2) node [right,text width=0.4cm,midway]{$a$};
       \draw[dotted] (0,2) -- (1,2);
       \draw[] (0.3,-1.6) -- (1.6,-3.4) node [left,text width=0.4cm,midway]{$b$};
       \draw[] (1.1,-1) -- (2.4,-2.8) node [right,text width=0.4cm,midway]{$b$};
       \draw[dotted] (1.6,-3.4) -- (2.4,-2.8);
       \draw[blue] (1.1,-2.7) -- (1.9,-2) node [below,text width=0.4cm,midway]{$\beta_{g+1}'$};
       \draw[blue] (0,1) -- (1,1) node [above,text width=0.4cm,midway]{$\beta_{g+1}$};
       \draw[red] (0.5,1) -- (0.5,0);
       \draw[red] (0.8,-1.4) -- (1.5,-2.3);
       \draw[red,dotted] (0.5,0).. controls (0.5,-0.7) and (0.5,-1).. (0.8,-1.4) node [right,text width=0.4cm,midway]{$\alpha_{g+1}$};
       \end{scope}
    \end{tikzpicture}
    \caption{Deformation of pair $(\alpha_{g+1},\beta_{g+1})$ along the loop $\gamma$}
    \label{fig:turning_angle}
\end{figure}
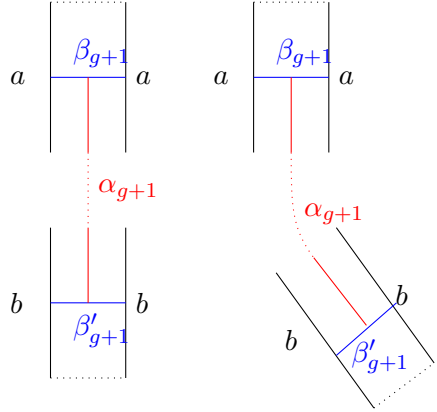
\end{proof}

\begin{rem}
 Proposition~\ref{prop:base_case2} can be also deduced from Proposition~\ref{prop:sim_res_resolve}.
\end{rem}

\subsection{Computation of the spin stratum class of certain horizontal level graphs}\label{sec:horG}

In a moduli space of multi-scale differentials of type $\mu$, where $\mu$ is of even type, there will be no boundary stratum associated to some one-edge horizontal enhanced level graph of compact type. However, in the computation of clutching pullback with respect to $\Gamma_0$ (according to Assumption~\ref{prop:assum}, this will only be applied to the case that $\mu=(m_1,m_2)$), we will encounter signatures of the form $\mu'=(m_1,m_2,-1,-1)$. Then we have to take care of some horizontal level graphs of compact type. This will happen if and only if $m_1,m_2\geq 0$. Hence, in this section, we will explain how to tackle this problem.

To compute the spin stratum class of a holomorphic stratum of signature $\mu=(m_1,...,m_n)$, we will have to compute first the spin stratum class of the meromorphic stratum of signature $\mu'=(m_1,...,m_n,-1,-1)$. If $n\geq 2$, then the multi-scale differentials of type $\mu'$ can be compatible with some one-edge horizontal level graphs of compact type. As all the integers $m_i\geq 0$, the one-edge horizontal graphs of compact type (associated to signature $\mu'$) will always be composed of two vertices such that there are two simple poles (and no other poles) on each vertex. The following example illustrates what kind of horizontal level graphs of compact type we will have to take into account. 

\begin{exa}
\label{exa:hor}
Let $\mu=(2,2)$ and $\mu'=(2,2,-1,-1)$. We want to compute \\$\xi_\Gamma^*p_*\big([\mathbb{P}\Xi\overline{\mathcal{M}}_{g,n}(2,2,-1,-1)]^{\spin}\big)$, where $\Gamma$ is the following stable graph:
\begin{center}
    \begin{tikzpicture}
    
         \node (a) at (-1,0) [circle,draw,inner sep=0pt,minimum size=5pt]{$1$};
   
    \node (c) at (1,0) [circle,draw,inner sep=0pt,minimum size=5pt]{$1$};

    \path[-] (a) edge (c) ;
    \node at (-1.3,-0.7){$p_1$} ;
    \node at (-1,0.7){$p_3$} ;
    \node at (1,0.7){$p_4$} ;
    \node at (1.3,-0.7){$p_2$} ;
    \path [-] (a) edge (-1.2,-0.5);
    \path [-] (a) edge (-1,0.5);
    \path [-] (c) edge (1.2,-0.5);
    \path [-] (c) edge (1,0.5);
    \end{tikzpicture}
\end{center}
By the Clutching Pullback Formula, we will need to compute the projection pushforward of the divisor class corresponding to the following level graph $\Gamma_H$:

\begin{center}
    \begin{tikzpicture}
    
         \node (a) at (-1,0) [circle,draw,inner sep=0pt,minimum size=5pt]{$1$};
   
    \node (c) at (1,0) [circle,draw,inner sep=0pt,minimum size=5pt]{$1$};

    \path[-] (a) edge (c) ;
    \node at (-1.3,-0.7){$2$} ;
    \node at (-1,0.7){$-1$} ;
    \node at (-0.6,0.2){$-1$} ;
    \node at (1,0.7){$-1$} ;
    \node at (0.6,0.2){$-1$} ;
    \node at (1.3,-0.7){$2$} ;
    \path [-] (a) edge (-1.2,-0.5);
    \path [-] (a) edge (-1,0.5);
    \path [-] (c) edge (1.2,-0.5);
    \path [-] (c) edge (1,0.5);
    \end{tikzpicture}
\end{center}
As in Example~\ref{exa:recur_clutch_0}, the spin parity of a multi-scale differential associated with such kind of level graph seems to be hard to determine. Nonetheless, the good thing here is that the spin parity on each vertex is still well defined. The reason is, if we look at a single vertex, the simple poles are still paired by residue condition.
\end{exa}

However, one needs to be careful that the spin parity of a multi-scale differential associated with the level graph above is not just the sum of the spin parities at the vertices. Indeed, if we cut the horizontal edge and consider the flat surface corresponding to each vertex, we will doubly count the turning number contribution of the loop formed by the "$-1$"-legs. Hence, we should eliminate this effect. The correct way to count is to take the sum of spin parities of the vertices and then minus one. Let $\mu_0=(m_{0,1},...,m_{0,n_0},-1,-1)$ and  $\mu_1=(m_{1,1},...,m_{1,n_1},-1,-1) $ be the signatures on the two vertices respectively. Then we have

\begin{equation}
    p_{\Gamma_H*}([\mathbb{P}\Xi\overline{\mathcal{M}}_{\Gamma_H}]^{\spin})=- \pi_0^*p_{0*}([\mathbb{P}\Xi\overline{\mathcal{M}}_{g_0,n_0}(\mu_0)]^{\spin})\cdot \pi_1^*p_{1*}([\mathbb{P}\Xi\overline{\mathcal{M}}_{g_1,n_1}(\mu_1)]^{\spin}),
\end{equation}
 where $\pi_0,\pi_1$ are the projection map of $\overline{\mathcal{M}}_\Gamma$ to its components. 
 
 In Example~\ref{exa:hor}, each vertex is of the signature $(2,-1,-1)$, thus the corresponding stratum of differentials has only the odd spin component. As a result, $p_{\Gamma_H*}([\mathbb{P}\Xi\overline{\mathcal{M}}_{\Gamma_H}]^{\spin})=-p_{\Gamma_H*}([\mathbb{P}\Xi\overline{\mathcal{M}}_{\Gamma_H}])$.

\begin{rem}
For a stratum whose signature $\mu$ is of even type, there will be no one-edge horizontal level graph which is of compact type. Otherwise, the degree of the differentials on each vertex would be odd.

Assume that $\mu$ is minimal (i.e. there will be only one entry and the entry is non-negative) and $\mu'$ be the signature obtained by adding $-1$s to $\mu$. Then there will be no one-edge horizontal level graph of compact type for $\mu'$. The reason is that there will be a vertex of signature with only negative entries. This can only be possible if the vertex has genus $0$. This will force that the vertex has at least $3$ legs. Since three poles will sum up to strictly less than $-2$, this kind of vertex will not be possible.
\end{rem}

Now, we really have all the tools we need to construct our recursive algorithm. We will explain the algorithm in the next section in detail. 

\section{The algorithm to compute the spin stratum classes}

In this section, we will show the architecture of our algorithm. Then we will enumerate some cases one can easily cross-check our results with. Before we start, let us clarify the background of the sage package we used. Our codes are based on the \texttt{Sage} package \texttt{admcycles} and its subpackage \texttt{diffstrata}. The package \texttt{admcycles} has been designed for doing intersection theory in the tautological rings of moduli spaces of stable curves (cf. \cite{delecroix2020admcycles}). Moreover, it can compute a basis for each component $\RH^{2k}(\overline{\mathcal{M}}_{g,n})$ of the tautological ring by assuming the generalized Faber-Zagier relations. On the other hand, the subpackage \texttt{diffstrata} is programmed for doing intersection theory in the tautological rings of the compactified generalised strata (cf. \cite{costantini2020diffstrata}).

\subsection{Description of the algorithm}
Our algorithm is mainly based on the following tools we introduced in Section~\ref{sec:prepare}:
\begin{itemize}\label{list:tools}
    \item[(i)] Proposition~\ref{prop:arbarello} asserts the existence of a unique solution to the system of linear equations obtained by clutching pullbacks with respect to one-edge stable graphs. The basis of the tautological ring can be computed by \texttt{admcycles}.
    \item[(ii)] Assumption~\ref{prop:assum} allows us to avoid doing clutching pullback with respect to $\Gamma_0$ if the number of markings is larger than $2$. This will reduce the computation complexity.
    \item[(iii)] To compute the clutching pullback of the spin stratum class with respect to a one-edge graph $\Gamma$, we apply the Clutching Pullback Formula~\eqref{eq:excess_p}.
    \item[(iv)] In the computation of a single term of the excess intersection formula, Proposition~\ref{prop:res_resl}, Proposition~\ref{prop:xi} and Proposition~\ref{prop:sim_res_resolve} allow us to pass the spin variant of the fundamental class of a compactified generalised stratum to a tautological class of the ambient stratum. Recursively, one can instead compute the projection pushforwards of a tautological class of a compactified generalised stratum with no residue condition.
    \item[(v)] If we encounter a horizontal level graph, we can just use the method introduced in Section~\ref{sec:horG} to determine the spin parity.
    \item[(vi)] By Corollary~\ref{cor:1.3}, we can ignore the computation of the projection pushforward $p_*([D_\Delta]^{\spin})=0$ if $\Delta$ has any vertical edge of even enhancement. 
    \item[(vii)] Proposition~\ref{prop:base_case1} and Proposition~\ref{prop:base_case2} gives an explicit formula to compute the base cases (the signature is of the form $(0,-1,-1)$ and $(2k,-2k,-1,-1)$) in the recursion. In addition, a differential of even type and $g=0$, which has no simple poles, will have even spin by default.
\end{itemize}

 Our algorithm is designed not only to compute the spin stratum class (which can be regarded as the projection pushforward of the spin variant of the fundamental class of a compactified stratum), but also the projection pushforward of the spin variant of a tautological class of a compactified generalised stratum. 
 
 Before we dig into the details of the algorithm, we introduce some terminology in our algorithm. An \emph{additive generator} is analogous to a decorated stratum in the tautological rings of moduli spaces of stable curves. More precisely, an additive generator of the tautological ring of a compactified generalised stratum $\pmoxrmoduli$ is just a product of a $\psi$-polynomial with the cycle class of a boundary stratum $[B_\Delta]^{\spin}\in \mathrm{R}^*(\pmoxrmoduli)$. Here, $B_\Delta$ refers to the boundary stratum, which is associated to the level graph $\Delta$. Our algorithm will only support additive generators associated with level graphs \emph{without horizontal edges}. A \emph{tautological class} is just a $\mathbb{Q}$-linear combination of additive generators.

\begin{alg}
This algorithm will convert the spin variant of a tautological class of a compactified generalised stratum $\pmoxrmoduli$ (which admits spin structure) to a product tautological class of the moduli space of disconnected stable curves $\prod_{g\in\boldsymbol{g},n\in\boldsymbol{n}}\overline{\mathcal{M}}_{g,n}$. It consists of the following steps:
\begin{description}
\item[Step 1] First, we break down the input tautological class into additive generators and reduce the computation on the single additive generator. As an additive generator $A$ is just $f(\boldsymbol{\psi})\cdot [B_\Delta]$ (where $f(\boldsymbol{\psi})$ is a $\psi$-polynomial), the projection pushforward $p_*(A)$ will be just $f(\boldsymbol{\psi})\cdot p_*([B_\Delta])$. 

\item[Step 2] It reduces to compute $p_*([B_\Delta])$. If $\Delta$ is the trivial level graph, we will jump to Step 3. Otherwise, we extract the level strata $\mathbb{P}\Xi\overline{\mathcal{M}}_{\Delta,i}$ (where $i=0,-1,...,-L+1$) from the levels of $\Delta$. Then we compute the projection pushforwards of the spin variant of the fundamental classes of each level stratum and clutch the classes together to yield $p_*([B_\Delta])$. (Here, if $\Delta$ has any edge of even enhancement, $p_*([B_\Delta])$ will be zero.)

\item[Step 3] At this stage, we have to compute the projection pushforward of the spin variant of the fundamental class of a compactified generalised stratum $\pmoxrmoduli$ on $\prod_{g\in\boldsymbol{g},n\in\boldsymbol{n}}\overline{\mathcal{M}}_{g,n}$. If there is no residue condition (or the poles are only paired up simple poles), then we will go to Step 4 in case the tuple $\boldsymbol{g}$ has only one entry; while it will just return $0$ in case the generalised stratum has more than one product components due to dimensional reason. Otherwise, we will resolve the residue conditions and express the spin variant of the fundamental classes as a tautological class of the ambient stratum. After that, we will return to Step 1.

\item[Step 4] The algorithm now needs to compute the spin stratum class of a signature $\mu$ of even type. If $g=0$ or the signature is of the form $(0,-1,-1)$ and $(2k,-2k,-1,-1)$, we will apply our results of the base cases. Otherwise, we will compute the clutching pullbacks of the spin stratum class with respect to one-edge stable graphs (self-loop graph $\Gamma_0$ will be ignored if the length of $\mu$ is greater than $2$). To apply the Clutching Pullback Formula to a specific one-edge graph $\Gamma$, we will sort out the two-level graphs or horizontal graphs with a $\Gamma$-structure.

\item[Step 5] At this stage, we need to compute a single term in the Clutching Pullback Formula associated to some codimension-$1$ level graph $\Delta$ with a $\Gamma$-structure. We extract the top resp. bottom level stratum $\mathbb{P}\Xi\overline{\mathcal{M}}_{\Delta,\top}$ resp. $\mathbb{P}\Xi\overline{\mathcal{M}}_{\Delta,\bot}$. Then we compute the projection pushforward of the spin variant of the fundamental classes of level these strata (return to Step 3) and clutch these classes to yield a tautological class in $\overline{\mathcal{M}}_\Gamma$.   
\item[Step 6]
After we have computed the clutching pullbacks of the spin stratum class and expressed the pullback classes by the basis computed by \texttt{admcycles}, we will solve the system of linear equations and yield the spin stratum class which is expressed in a basis computed by \texttt{admcycles}.
\end{description}
\end{alg}

If we just want to theoretically show that the spin stratum classes are recursively computable, we can drop Assumption~\ref{prop:assum} and argue just by Proposition~\ref{prop:arbarello} and the Clutching Pullback Formula.
\begin{proof}[Proof of Theorem~\ref{prop:main_result1}]
We can proceed with the algorithm above, but pretend we have a basis for the cohomology ring of every moduli space of stable curves $\overline{\mathcal{M}}_{g,n}$. Let $\mu=(m_1,...,m_r,(-1,-1)^{t})$ be a signature of even type (the expression $(-1,-1)^{t}$ means that we have $2t$ simple poles and these simple poles are paired up by residue condition) where $-2t+\sum_{i=1}^r m_i=2g-2$. The codimension of the spin stratum class of such signature will be 
\begin{align*}
    k=\begin{cases}
    2g+2t-2 & \mbox{ if all } m_i \mbox{ are non-negative}\\
    2g+2t & \mbox{ otherwise }
    \end{cases}
\end{align*}
 Recall the range of injectivity in Proposition~\ref{prop:arbarello}:
\begin{align*}
     d(g,n)=
    \begin{cases}
    2n-7 &\mbox{ if } g=0\\
    2g-2 & \mbox{ if } n=0,1\\
    2g & \mbox{ if } n=2\\
    2g-2+n & \mbox{ otherwise } 
    \end{cases}
\end{align*}
Note that the number of markings $n=r+2t$. In addition,
\begin{align*}
\begin{cases}
r\geq 1  &\mbox{ if all $m_i$ are positive}\\ 
    r\geq 2 &\mbox{ otherwise.}
\end{cases}
\end{align*}

 For $g=0$, the range $d(g,n)=2(r+2t)-7=2r+4t-7$. If all $m_i$ are positive, then the codimension is $2t-2$ and $d(g,n)-(2t-2)=2r+2t-5\geq 0$ unless $r=1,t=1$, which is the case of $\mu=(0,-1,-1)$. Otherwise, the codimension will be $2t$ and $d(g,n)-2t=2r+2t-7\geq 0$ unless $r=2,t=1$, which is the case of $\mu=(2k,-2k,-1,-1)$. Since the initial cases $\mu=(0,-1,-1)$ and $\mu=(2k,-2k,-1,-1)$ are known, we can recursively compute the spin stratum classes for $g=0$ by the algorithm above theoretically via the clutching pullback formula and solving a system of linear equations.
 
 For $g>0$, we have
\begin{align*}
k=\begin{cases}
 2g-3+(1+2t)\leq 2g-3+(r+2t)=d(g,n) &\mbox{ if all $m_i$ are non-negative}\\
    2g-3+(3+2t)\leq 2g-3+(r+2t)=d(g,n) &\mbox{ if } r\geq 3
\end{cases}
\end{align*}
 The remaining signatures for $g>0$ which we cannot apply the clutching pullback method will be of the form $(a,-b,(-1,-1)^t)$. By Proposition~\ref{prop:sim_res_resolve}, the spin stratum classes of these signatures can be reduced to the computation for spin stratum classes on which the clutching pullback method can be applied.
\end{proof}

After we have presented our algorithm, we want to explain why the recursion based on Assumption~\ref{prop:assum} will work. The main problem is whether the emergence of simple poles will finally reduce to the cases we manage to handle. 

\begin{itemize}
    \item[(i)] If $\mu=(m_1,...,m_n)$ is of even type, then the signature $\mu'=(m_1,...,m_n,-1,-1)$ (obtained by degenerating to irreducible nodal curve) will have length of at least $3$. By Assumption~\ref{prop:assum}, to compute the spin stratum class, it suffices to compute the clutching pullbacks with respect to the one-edge stable graphs of compact type. Thus, in the recursion, we only need to consider spin stratum classes of signatures containing at most one pair of simple poles.
    \item[(ii)] Moreover, the signatures $\mu'=(m_1,...,m_n,-1,-1)$ appearing in the computation of clutching pullback with respect to the $\Gamma_0$ are of length less equal to $4$. This is because if $n\geq 3$, then we will not need to compute the clutching pullbacks with respect to $\Gamma_0$ due to Assumption~\ref{prop:assum}. 
    \item[(iii)] Our algorithm only needs to handle horizontal level graphs of compact type for signatures of the form $(m_1,m_2,-1,-1)$ where $m_1,m_2\geq 0$. Then according to Section~\ref{sec:horG}, we will reduce to compute the spin stratum classes associated to signatures $\mu_0=(m_0,-1,-1)$ and $\mu_1=(m_1,-1,-1)$ respectively. For both signatures, Assumption~\ref{prop:assum} still applies, so we will not need to do the clutching pullback with respect to $\Gamma_0$.
    \item[(iv)] The recursion of computing spin stratum class associated to a signature of the form $\mu=(m_0,m_1)$ will end up either with the computation for signatures of the form $(2k,-2k,-1,-1)$ or with the computation for the signature $(0,-1,-1)$.
\end{itemize}

\begin{proof}[Proof of Theorem~\ref{prop:cond_result}]
By the discussion above, if Assumption~\ref{prop:assum} is true for $g\leq g_0$ and $n\leq n_0$, then our algorithm will control the computation of clutching pullbacks for signatures that we manage to handle within \texttt{admcycles}. Then as in the proof of Theorem~\ref{prop:main_result1}, we can conclude that for $g\leq g_0$ and $n\leq n_0$, our algorithm will yield the correct spin stratum class.
\end{proof}

We will give some explicit examples of computing the spin stratum classes in Section~\ref{sec:exa_spin_pull}. 
\subsection{\texttt{Sage} commands for handling (spin) stratum}

We now show how to input a (spin) additive generator or tautological class in \texttt{diffstrata} and call the methods we have coded to compute the spin stratum classes. We first need to define a stratum (with/without spin structure): either using the class \texttt{GeneralisedStratum} or its subclass \texttt{SpinStratum}. For example, we can input the compactified generalised stratum of differentials of even type $\mathbb{P}\Xi\overline{\mathcal{M}}_{1,3}^\mathfrak{R}(4,-2,-2)$, where the residue condition is $r_2=0$, by the following code:
\begin{lstlisting}
sage: from admcycles.diffstrata import *
sage: from admcycles.diffstrata.spinstratum import SpinStratum, addspin
sage: X = SpinStratum([Signature((4,-2,-2))],res_cond=[[(0,1)]])

# alternatively
sage: X_nospin = GeneralisedStratum([Signature((4,-2,-2))],res_cond=[[(0,1)]])
sage: X = addspin(X_nospin)
\end{lstlisting}

 An object in the class \texttt{GeneralisedStratum} has a list \texttt{bics} of vertical two-level graphs. For example, if we want to enter a (spin) additive generator of the stratum above :
\begin{lstlisting}
sage: from admcycles.diffstrata.spinstratum import AG_with_spin, AG_addspin
sage: A = AG_with_spin(X,((2,),0),leg_dict={1:2,2:1})

# alternatively
sage: A_nospin = AdditiveGenerator(X_nospin,((2,),0),leg_dict={1:2,2:1})
sage: A = AG_addspin(A_nospin)
\end{lstlisting}
The tuple $((2,),0)$ refers to the $3$rd two-level graph on the list \texttt{bics}. If we enter $((),0)$, then it refers to the underlying level graph of the generalised stratum itself. The condition \texttt{leg\_dict=\{1:2,2:1\}} means that the level graph is decorated by the monomial $\psi_1^2\psi_2$. The tautological class corresponding to two times of this additive generator will be:
\begin{lstlisting}
from admcycles.diffstrata.spinstratum import ELGT_with_spin, ELGT_addspin
sage: E = ELGT_with_spin(X,[(2,A)])

# alternatively
sage: E_nospin = ELGTautClass(X_nospin,[(2,A_nospin)])
sage: E = ELGT_addspin(E_nospin)
\end{lstlisting}
Hence, if we enter the following codes, it will represent the (spin) stratum class in the tautological ring of $\mathbb{P}\Xi\overline{\mathcal{M}}_{g,n}(\mu)$.
\begin{lstlisting}
sage: A_nospin = AdditiveGenerator(X,((),0))  # no spin
sage: E_nospin = ELGTautClass(X,[(1,A)])  # no spin
sage: A = AG_with_spin(X,((),0))
sage: E = ELGT_with_spin(X,[(1,A)])
\end{lstlisting}

To compute the (spin) stratum class in the tautological ring of $\overline{\mathcal{M}}_{g,n}$, one can use the following:
\begin{lstlisting}
sage: stratum_cl = E_nospin.to_prodtautclass().pushforward()  #nospin
sage: spin_stratum_cl = E.to_prodtautclass_spin().pushforward()
\end{lstlisting}

\subsection{Cross-checking our results with existing results}

The spin stratum class for a stratum of differentials on genus $1$ curves can be easily determined. The odd spin stratum $\mathcal{H}_1(\mu)^-$ coincides with the stratum $\mathcal{H}_1(\mu')$, where $2\mu'=\mu$. Indeed, one can see that from the perspective of theta characteristics. Let $(C;p_1,...,p_n)$ be an marked elliptic curve and $\mu=(2m_1,...,2m_n)$ be a signature such that $\sum_i m_i=0$. Then the sheaf $\mathcal{O}_C(\sum_i m_ip_i)$ has a global section if and only if it is isomorphic to $\mathcal{O}_C\simeq \omega_C$. In other words, $h^0(\mathcal{O}_C(\sum m_ip_i))=1$ if $\sum m_ip_i$ is a canonical divisor, otherwise $h^0(\mathcal{O}_C(\sum m_ip_i))=0$. This implies that the $\mathcal{H}_1(2m_1,...,2m_n)^-\simeq \mathcal{H}_1(m_1,...,m_n)$. Hence, 
\[[\mathcal{H}_1(2m_1,...,2m_n)]^{\spin}=[\mathcal{H}_1(2m_1,...,2m_n)]-2[ \mathcal{H}_1(m_1,...,m_n)]\]

Due to the fact that all the explicit computations have to be done by \texttt{admcycles}, we will show in Section~\ref{sec:cross_check_sage} how one can cross-check our results of spin stratum classes in \texttt{Sage} with the existing results. We will demonstrate the \texttt{Sage} commands for doing the cross-check for the following signatures:

\begin{itemize}
    \item $g=1$: $\mu=(4,4,-8)$ (by the discussion above);
    \item $g=2$: $\mu=(2)$ (cf. \cite{kontsevich2003connected}) and $\mu=(4,-2)$ (cf. \cite{schmitt2020intersections}, \cite{boissy2015connected});
    \item $g=3$: $\mu=(4)$ and $\mu=(2,2)$ (cf. \cite{kontsevich2003connected}.
\end{itemize}

\subsection{Cross-checking the results in \texttt{Sage}}\label{sec:cross_check_sage}

We apply the following methods in \texttt{admcycles} to check the results for $\mu=(4,4,-8)$:

\begin{lstlisting}
sage: from admcycles import *
sage: from admcycles.diffstrata import *
sage: from admcycles.diffstrata.spinstratum import Spin_strataclass
sage: Spinclass = Strataclass(1,1,(4,4,-8))- 2*Strataclass(1,1,(2,2,-4)) 
sage: Spinclass.basis_vector()
(0, -17, 9, 33, -26)
sage: X = SpinStratum([Signature((4,4,-8))])
sage: A=AG_with_spin(X,((),0))
sage: E=ELGT_with_spin(X,[(1,A)])
sage: our_result = E.to_prodtautclass_spin().pushforward()
sage: our_result.basis_vector()
(0, -17, 9, 33, -26)

# alternatively, we have a simpler method Spin_strataclass
sage: Spin_strataclass((4,4,-8)).basis_vector()
(0, -17, 9, 33, -26)
\end{lstlisting}

If $\mu=(2)$, according to \cite{kontsevich2003connected}, the stratum of type $(2)$ is connected and hyperelliptic of odd spin, thus we have $[\overline{\mathcal{H}}(2)]^{\spin}=-[\overline{\mathcal{H}}(2)]$.

\begin{lstlisting}
sage: Spinclass = -Strataclass(2,1,(2,))
sage: Spinclass.basis_vector()
(1/2, -7/2, 1/2)
sage: X = SpinStratum([Signature((2,))])
sage: A=AG_with_spin(X,((),0))
sage: E=ELGT_with_spin(X,[(1,A)])
sage: our_result = E.to_prodtautclass_spin().pushforward()
sage: our_result.basis_vector()
(1/2, -7/2, 1/2)
\end{lstlisting}

If $\mu=(4,-2)$, the stratum of differentials has a hyperelliptic component of odd spin and a component of even spin, according to \cite{boissy2015connected}. In \cite{schmitt2020intersections}, Schmitt and van Zelm have used the method of clutching pullbacks to compute the fundamental class of loci of admissible covers in $\overline{\mathcal{M}}_{g,n}$. The hyperelliptic locus is a special case. The method is built-in in \texttt{admcycles}.

\begin{lstlisting}
sage: G = CyclicPermutationGroup(2)
sage: H = HurData(G,[G[1],G[1],G[1],G[1],G[1],G[1]])
sage: Hyp = (1/factorial(4))*Hidentify(2,H,markings=[1,2])
sage: Spinclass = Strataclass(2,1,(4,-2)) - 2*Hyp
sage: Spinclass.basis_vector()
(-63/2, 21/2, -22, -27, 44, 41, 54, 63/2, -147/2, 21/2, -1, -6, -21/2, 0)
sage: X = SpinStratum([Signature((4,-2))])
sage: A=AG_with_spin(X,((),0))
sage: E=ELGT_with_spin(X,[(1,A)])
sage: our_result = E.to_prodtautclass_spin().pushforward()
sage: our_result.basis_vector()
(-63/2, 21/2, -22, -27, 44, 41, 54, 63/2, -147/2, 21/2, -1, -6, -21/2, 0)
\end{lstlisting}

If $\mu=(4)$, according to \cite{kontsevich2003connected}, the differential stratum has a hyperelliptic component of even spin and a component of odd spin. Hence we can use the same trick as before. 

\begin{lstlisting}
sage: G = CyclicPermutationGroup(2)
sage: H = HurData(G,[G[1],G[1],G[1],G[1],G[1],G[1],G[1],G[1]])
sage: Hyp = (1/factorial(7))*Hidentify(3,H,markings=[1])
sage: Spinclass = 2*Hyp - Strataclass(3,1,(4,))  
sage: Spinclass.basis_vector()
(-729, 1099/12, -787/12, 213, -1151/12, 33/2, 533/2, 985/12, -779/12, -78, 219/2, 19/2, 50, -2213/12, -197/24, 35)
sage: X = SpinStratum([Signature((4,))])
sage: A=AG_with_spin(X,((),0))
sage: E=ELGT_with_spin(X,[(1,A)])
sage: our_result = E.to_prodtautclass_spin().pushforward()
sage: our_result.basis_vector()
(-729, 1099/12, -787/12, 213, -1151/12, 33/2, 533/2, 985/12, -779/12, -78, 219/2, 19/2, 50, -2213/12, -197/24, 35)
\end{lstlisting}

\begin{rem}
The fundamental class of the hyperelliptic component was actually also computed by Chen in \cite{chen2016loci}:
\begin{align*}
  [\overline{\mathcal{H}}_{3}(4)^+]=& \psi(18\lambda-2\delta_0-9\delta^{\{1\}}_1-6\delta^{\{1\}}_1)-\lambda(45\lambda-\frac{19}{2}\delta_0-24\delta^{\{1\}}_2)\\&-\frac{1}{2}\delta_0^2-\frac{5}{2}\delta_0\delta^{\{1\}}_2-3(\delta^{\{1\}}_2)^2  
\end{align*}
One can check that it coincides with the class computed in \texttt{admcycles}.
\begin{lstlisting}
sage: Hyp.basis_vector()
(577/2, -36, 49/2, -81, 77/2, -8, -108, -65/2, 25, 31, -83/2, -7/2, -41/2, 151/2, 13/4, -14)
sage: chen_Hyp = psiclass(1,3,1)*(18*lambdaclass(1,3,1)- tautgens(3,1,1)[4]-9*tautgens(3,1,1)[3]-6*tautgens(3,1,1)[2])-lambdaclass(1,3,1)*(45*lambdaclass(1,3,1)-(19/4)*tautgens(3,1,1)[4]-24*tautgens(3,1,1)[2])-(1/8)*tautgens(3,1,1)[4]^2-(5/4)*tautgens(3,1,1)[4]*tautgens(3,1,1)[2]-3*tautgens(3,1,1)[2]^2
sage:  chen_Hyp.basis_vector()
(577/2, -36, 49/2, -81, 77/2, -8, -108, -65/2, 25, 31, -83/2, -7/2, -41/2, 151/2, 13/4, -14)
\end{lstlisting}

\end{rem}

If $\mu=(2,2)$, the stratum also has exactly two components, namely the hyperelliptic component and the odd spin component. We can use the same method as in the case of $\mu=(4)$ to compute the spin stratum class. The result agrees with the spin stratum class yielded by our algorithm.

In the next section, we will introduce the conjecture of the twisted spin double ramification cycles, and show how our algorithm can be improved by making use of it. 
 
\section{Conjectural formula of the spin twisted DR cycle} \label{sec:conj}

In this section, we will first introduce the theorem of twisted double ramification cycles (twisted DR cycle) and then state the conjectural formula of the spin twisted double ramification cycle from \cite{costantini2021integrals}. We now recall the definition of a twisted double ramification cycle. A \emph{$k$-twisted double ramification cycle} $\DR_g(a)$, where the $k$-ramification vector $a=(a_1,...,a_n)$ is a tuple of integers summing up to $k(2g-2+n)$, is a cycle on $\overline{\mathcal{M}}_{g,n}$ compactifying the condition $\omega_{\log}^{\otimes k}\simeq \mathcal{O}_C(a_1p_1+...+a_np_n)$. 
Notice that given a signature $\mu=(m_1,...,m_n)$ of a stratum of $k$-differentials (i.e. $\sum_im_i=k(2g-2)$), one can define a $k$-twisted double ramification cycle $\DR_g(a_\mu)$ by letting $a_\mu=(m_1+k,...,m_n+k)$.

\subsection{A formula of the $k$-twisted double ramification cycles}
Now we will introduce various geometric approaches to express the $k$-twisted double ramification cycles. For the (non-twisted) double ramification cycles, Pixton has constructed a formula expressed in terms of generators of the tautological ring of $\overline{\mathcal{M}}_{g,n}$, which has been proved in \cite{janda2017double}. The formula of double ramification cycles with targets was later proved in \cite{janda2020double}. 

The formula for $k$-twisted double ramification cycles is very similar to Pixton's formula. We need to give some definitions to state the formula. Let $a=(a_1,...,a_n)$ be a $k$-ramification vector. An \emph{admissible $k$-weighting} $\pmod r$ $w$ of a dual graph $\Gamma$ is a function $w:H(\Gamma)\longrightarrow \{0,1,...,r-1\}$, where $H(\Gamma)$ is the set of half edges and legs of $\Gamma$, such that 
\begin{itemize}
    \item for $h_i$, which is corresponding to marking $i\in \{1,...,n\}$, one has $w(h_i)=a_i \pmod r$,
    \item for any edge $e$ consisting of two half edge $h,h'$, we have $w(h)+w(h')=0 \pmod r$,
    \item for any $v\in V(\Gamma)$, \[\sum_{h\in H_\Gamma(v)} w(h)=k(2g(v)-2+n(v)) \pmod r , \]
    where $H_\Gamma(v)$ is the set of half-edges and legs incident to $v$.
    
\end{itemize}
 We define a mixed degree tautological class:

\begin{align*}
 P^{r,\bullet}_g(a)=\sum_{\Gamma\in G_{g,n}}\sum_{w\in W_{\Gamma,r}(a)} \frac{1}{|\Aut(\Gamma)|}\frac{1}{r^{h^1(\Gamma)}}\Cont_{a,\Gamma,w},   
\end{align*}
where $G_{g,n}$ is the set of all dual graphs with genus $g$ and $n$; $W_{\Gamma,r}(a)$ is the set of admissible $k$-weightings $\pmod r$ on $\Gamma$ with respect to the ramification vector $a$; and
\begin{align*}
    &\Cont_{a,\Gamma,w}\\
    =& \xi_{\Gamma *}\Bigg[\prod_{v\in V(\Gamma)}\exp(-k\kappa_1[v])\prod_{i=1}^n\exp(a_i^2\psi_{h_i})\prod_{e=(h,h')\in E(\Gamma)}\frac{1-\exp(-w(h)w(h')(\psi_h+\psi_{h'}))}{\psi_h+\psi_{h'}} \Bigg].
\end{align*}
 For sufficiently large values of $r$, the class $P^{r,\bullet}_{g}(a)$ is a mixed degree tautological class on $\overline{\mathcal{M}}_{g,n}$ whose coefficients are polynomials in $r$. We denote the class we obtained by substituting $r=0$ to the polynomials by $P^{\bullet}_g(a)$.
 
Finally, we can give the formula of the $k$-twisted double ramification cycle:
\begin{align}
 \DR_g(a)=2^{-g}P^g_g(a),
\end{align}
where $P^g_g(a)$ is the degree $g$ terms of $P^{\bullet}_g(a)$.

\subsection{The theorem of $k$-twisted DR cycle} In \cite{farkas2018moduli}, Farkas and Pandharipande have given a conjecture that the $1$-twisted double ramification cycle $\DR_g(a_\mu)$ agrees with the weighted fundamental class of the moduli space of twisted canonical divisors $[\widetilde{H}_g(\mu)]$. One can recall the definition of twisted $k$-canonical divisors by Remark~\ref{rem:twist_can}. Later in \cite{schmitt2016dimension}, the conjecture has been generalised for $k>1$. The generalised conjecture was later proved in \cite{holmes2021infinitesimal} and \cite{bae2020pixton}.
A \emph{simple star graph} is a vertical two-level enhanced level graph of type $\mu=(m_1,...,m_n)$ (where $\sum_im_i=k(2g-2)$) such that:
\begin{itemize}
    \item there is only one vertex $v_0$ (which is called the center) on the bottom level;
    \item there is no leg on the top level representing a pole;
    \item all the orders of poles and zeros of any vertex on the top level are divisible by $k$.
\end{itemize}
We denote the set of simple star graphs compatible to a given signature $\mu$ by $SG_1(\mu)$, now we can state the theorem of the $k$-twisted double ramification cycles.

\begin{thm}[\cite{holmes2021infinitesimal},\cite{bae2020pixton}]\label{thm:bigthm}
Let $g,n\geq 0$ and $k>0$, and $\mu=(m_1,...,m_n) \in \mathbb{Z}^n\setminus k\mathbb{Z}^n_{\geq 0}$ such that $\sum_ia_i=k(2g-2)$. Then we have
\begin{align}
    \DR_g(a_\mu)= \sum_{\Delta\in SG_1(\mu)}\frac{\prod_{e\in E(\Delta)}\kappa_e}{k^{N_0}|\Aut(\Delta)|}\xi_{\Delta*}\Bigg[& \big[\overline{\mathcal{H}}_{g(v)}^k(\mu[v],\kappa[v]-k)\big]\cdot\\
    &\prod_{v\in V(\Delta^\bot)}\big[\overline{\mathcal{H}}_{g(v)}(\frac{\mu[v]}{k},\frac{\kappa[v]-k}{k})\big]\Bigg],
\end{align}
where $\kappa_e$ is the enhancement of the edge $e$ and $N_0$ is the number of vertices on the top level. In addition, $\overline{\mathcal{H}}_{g(v)}(\mu[v], \kappa[v]-k)$ is the stratum $k$-differentials of signature prescribed by $\mu[v]$ and the enhancements of edges adhered to $v$. 
\end{thm}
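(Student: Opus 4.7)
The plan is to prove this by resolving the rational classifying map to the log Picard stack and matching both sides as pushforwards from a common birational model. The central geometric object is the rational morphism from $\overline{\mathcal{M}}_{g,n}$ to the log Picard stack of the universal curve sending $(C,p_1,\ldots,p_n)$ to the class of the log line bundle $\omega_{C,\log}^{\otimes k}\bigl(-\sum a_i p_i\bigr)$. The $k$-twisted double ramification cycle $\DR_g(a_\mu)$ should arise (up to the factor of $2^{-g}$ built into Pixton's definition) as the virtual pullback of the unit section under this classifying morphism, while the right-hand side expresses the same class geometrically via multi-scale differentials.

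First I would resolve the indeterminacy of this rational map on the moduli space of multi-scale $k$-differentials $\mathbb{P}\Xi\overline{\mathcal{M}}^k_{g,n}(\mu)$, where it extends to an honest morphism. On this resolution, the pullback of the unit section becomes an explicit intersection cycle that I would compare with Pixton's polynomial class $P^g_g(a)$. The key algebraic input is showing that the constant-in-$r$ part of $P^{r,\bullet}_g(a)$ coincides with the virtual class built from the universal family on the resolved moduli space; this is a combinatorial identity involving admissible $k$-weightings modulo $r$ summed over all dual graphs with the factor $r^{-h^1(\Gamma)}$.

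Second I would stratify $\mathbb{P}\Xi\overline{\mathcal{M}}^k_{g,n}(\mu)$ by enhanced level graphs and extract the boundary contributions surviving under $p_*$. The simple star graphs $\Delta\in SG_1(\mu)$ arise naturally: only on such graphs does the bottom level carry a genuine $k$-differential while the top level has all its pole and zero orders divisible by $k$, which is precisely the condition needed for the universal $k$-th root of $\omega_{\log}^{\otimes k}(-\sum a_i p_i)$ to extend as an honest line bundle on the top components. The coefficient $\prod_e \kappa_e/(k^{N_0}|\Aut(\Delta)|)$ should then emerge from combining the degree formula $\deg(p|_{D_\Delta})=\prod_e \kappa_e/\ell_\Delta$ for prong-matching equivalence classes, the ramification factor $k^{N_0}$ introduced by $k$-th root extraction on the $N_0$ top-level components, the multiplicity $\ell_\Delta$ arising from the infinitesimal deformation parameters, and the automorphism averaging over $\Aut(\Delta)$.

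The main obstacle will be matching the Pixton-style polynomial coefficients with those of the geometric boundary analysis. Pixton's formula is a priori a signed sum over all dual graphs with polynomial coefficients in $r$, whereas the right-hand side is supported on the much smaller subclass $SG_1(\mu)$. The reconciliation, achieved in \cite{holmes2021infinitesimal} for $k=1$ and in \cite{bae2020pixton} for general $k$, requires showing that contributions from non-simple-star graphs and from graphs with non-$k$-divisible top enhancements cancel via intricate logarithmic intersection-theoretic identities. The injectivity of boundary pullback established in Proposition~\ref{prop:arbarello} could in principle reduce the verification to cases with fewer nodes, but the core cancellation identity — essentially a statement that the higher-degree terms of Pixton's polynomial vanish when restricted to the relevant strata — remains the essential technical difficulty that both cited works resolve by deep analysis of the logarithmic Picard stack and the rubber map formalism.
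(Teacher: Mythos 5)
The paper does not contain its own proof of this theorem: it is cited verbatim from \cite{holmes2021infinitesimal} (for $k=1$) and \cite{bae2020pixton} (for general $k$), so there is no internal argument for your proposal to match.

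There is, however, a genuine and load-bearing gap in your sketch. You propose to resolve the indeterminacy of the classifying map $\overline{\mathcal{M}}_{g,n}\dashrightarrow\operatorname{Pic}$ ``on the moduli space of multi-scale $k$-differentials $\mathbb{P}\Xi\overline{\mathcal{M}}^k_{g,n}(\mu)$, where it extends to an honest morphism.'' This cannot work as stated: the rational map in question has source (a dense open of) $\overline{\mathcal{M}}_{g,n}$, of dimension $3g-3+n$, whereas $\mathbb{P}\Xi\overline{\mathcal{M}}^k_{g,n}(\mu)$ is a birational modification of the stratum closure $\overline{\mathcal{H}}^k_g(\mu)$, a substack of dimension $2g-2+n$ (holomorphic case) or $2g-3+n$ (meromorphic case). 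A proper birational (or even proper generically finite) resolution must dominate the source; a lower-dimensional stack cannot. Moreover, on $\mathbb{P}\Xi\overline{\mathcal{M}}^k_{g,n}(\mu)$ the classifying map is not merely defined --- by the very definition of the stratum it takes value \emph{constantly equal to the unit section}, so there is no meaningful pullback of the unit section there; the whole space is that pullback. What the cited works actually do is replace $\overline{\mathcal{M}}_{g,n}$ by a logarithmic blowup (a log-\'etale modification) on which the Abel--Jacobi map extends, compute the pullback of the zero section there, and then show by a careful analysis of the log structure that the resulting cycle decomposes as a sum over simple star graphs with exactly the coefficients in the theorem. Your second and third paragraphs (coefficient bookkeeping, cancellation of non-simple-star contributions, injectivity of boundary pullback) describe plausible post-processing steps, but they are built on a resolution that does not exist; without the correct log modification in hand, the remainder of the argument has no geometric object to attach to.
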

 
 \subsection{The conjecture on spin $k$-twisted DR cycle}\label{sec:spin_DR_conj} 
 We will first define the spin double ramification cycle and then state the conjecture of the spin version of Theorem~\ref{thm:bigthm}. Let $k$ be a positive odd integer and $a$ be a ramification vector whose entries are all odd. We define the spin variant of the mixed degree tautological class $P^{r,\bullet}_g(a)$ (where $r$ is an even number) as:
 \begin{align*}
     P^{r,\spin,\bullet}_g(a)=\sum_{\Gamma\in G_{g,n}}\sum_{w\in W_{\Gamma,r,odd}(a)} \frac{1}{2^{g-h^1(\Gamma)}|\Aut(\Gamma)|}\frac{1}{r^{h^1(\Gamma)}}\Cont_{a,\Gamma,w},
 \end{align*}
 where $W_{\Gamma,r,odd}(a)$ is the set of admissible $k$-weightings such that all the edges are weighted by odd numbers. For sufficiently large values of $r$, $P^{r,\spin,\bullet}_g(a)$ is a mixed degree tautological class whose coefficients are polynomials in $r$. We denote the class we obtained by substituting $r=0$ to the polynomials by $P^{\spin,\bullet}_g(a)$.
 
 The formula of a \emph{spin $k$-twisted double ramification cycle ramification cycle} is:
 \begin{align*}
     \DR_g^{\spin}(a)=2^{-g}P^{\spin,g}_g(a).
 \end{align*}
 
 Finally, we can state the conjecture on the spin $k$-twisted double ramification cycles:
 \begin{conj}[\cite{costantini2021integrals}]\label{conj:spin_pixton}
 Let $g,n\geq 0$ and $k$ be a positive odd integer. In addition, let $\mu=(m_1,...,m_n) \in \mathbb{Z}^n\setminus k\mathbb{Z}^n_{\geq 0}$ such that $\sum_ia_i=k(2g-2)$ and all the entries of $\mu$ are even. Then we have
\begin{align}\label{eq:spin_pixton}
    \begin{split}
    \DR^{\spin}_g(a_\mu)= \sum_{\Delta\in SG_1^{odd}(\mu)}\frac{\prod_{e\in E(\Delta)}\kappa_e}{k^{N_0}|\Aut(\Delta)|}\xi_{\Delta*}\Bigg[& \big[\overline{\mathcal{H}}_{g(v)}^k(\mu[v],\kappa[v]-k)\big]^{\spin}\cdot\\ &\prod_{v\in V(\Delta^\bot)}\big[\overline{\mathcal{H}}_{g(v)}(\frac{\mu[v]}{k},\frac{\kappa[v]-k}{k})\big]^{\spin}\Bigg],
    \end{split}
\end{align}
where $SG_1^{odd}(\mu)$ is the subset of $SG_1(\mu)$ consisting of the level graphs such that the enhancement of every edge is odd.
 \end{conj}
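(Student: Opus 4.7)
The plan is to parallel the proof of Theorem~\ref{thm:bigthm} due to \cite{holmes2021infinitesimal} and \cite{bae2020pixton}, but to lift every step to the moduli stack of spin curves $\overline{\mathcal{S}}_{g,n}\to\overline{\mathcal{M}}_{g,n}$, which decomposes as $\overline{\mathcal{S}}^+_{g,n}\sqcup\overline{\mathcal{S}}^-_{g,n}$ by parity of $h^0$. Since $k$ is odd and every $m_i$ is even, a $k$-differential $\eta$ of type $\mu$ canonically determines a theta characteristic $L_\eta=\omega_C^{\otimes(1-k)/2}\otimes\mathcal{O}_C(\tfrac12\divisor(\eta))$, and so the stratum of $k$-differentials lifts through $\overline{\mathcal{S}}_{g,n}$ with the spin parity equal to $h^0(L_\eta)\pmod 2$. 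My aim is to construct $\DR^{\spin}_g(a_\mu)$ as the $\pm 1$-weighted pushforward to $\overline{\mathcal{M}}_{g,n}$ of a virtual class on $\overline{\mathcal{S}}_{g,n}$ and then match this both with $2^{-g}P^{g,\spin}_g(a_\mu)$ and with the right-hand side of~\eqref{eq:spin_pixton}.

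For the first match I would adapt the logarithmic Abel-Jacobi resolution of \cite{holmes2021infinitesimal,bae2020pixton}, which realises $\DR_g(a_\mu)$ as the virtual class of the zero locus of a tautological section on a toroidal compactification of the universal Jacobian. Pulling the construction back along $\overline{\mathcal{S}}_{g,n}\to \overline{\mathcal{M}}_{g,n}$ yields a spin-refined virtual class, and a parity-graded version of the $R$-matrix/stable-quotient calculation of Janda-Pandharipande-Pixton-Zvonkine should express its signed pushforward in tautological form. The factor $2^{-(g-h^1(\Gamma))}$ appearing in $P^{r,\spin,\bullet}_g$ should drop out of the standard combinatorics of spin structures on a nodal curve with dual graph $\Gamma$: they form a torsor under $(\mathbb{Z}/2)^{2g}$ graded by the parity of the extension across each edge, and restricting to admissible weightings with every edge odd precisely accounts for the $h^1(\Gamma)$-dependent factor.

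For the second match I would reduce to the non-spin Theorem~\ref{thm:bigthm} vertex by vertex using the structural results of this paper. Corollary~\ref{cor:1.3} says that on any two-level boundary divisor carrying an edge of even enhancement the spin decomposition over prong-matchings is balanced, so those terms drop out of the signed pushforward and only $SG_1^{odd}(\mu)$ survives. On a simple star graph whose enhancements are all odd, Proposition~\ref{prop:1.1}(2) states that the spin parity of a compatible multi-scale differential is additive over the vertices; hence the pushforward factorises into a product of a spin stratum class at the central vertex and spin stratum classes at each outer vertex, giving exactly the summand in~\eqref{eq:spin_pixton}.

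The hard part, as I see it, will be constructing the spin-equivariant toroidal compactification of the universal Jacobian and checking that its logarithmic Abel-Jacobi section is compatible with the multi-scale boundary of \cite{bainbridge2019moduli}. Concretely, the level-rotation action on prong-matchings described in Section~\ref{sec:prong_rotation} must match, edge by edge, the action of the $2$-torsion of the Jacobian on spin structures under degeneration; this parity match is what forces the odd-enhancement restriction to appear in both the Pixton side and the boundary side. Once this compatibility is pinned down on the full boundary stratification of $\overline{\mathcal{M}}_{g,n}$ the rest of the argument should be a bookkeeping refinement of the non-spin proof, but establishing the spin-equivariant log picture across all degeneration types is where I expect the real technical work to lie.
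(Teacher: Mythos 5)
The statement you are trying to prove is labeled \texttt{conj} in the source and is attributed to \cite{costantini2021integrals}: the paper does not prove it. The paper's only contribution toward Conjecture~\ref{conj:spin_pixton} is numerical \emph{verification}: the algorithm computes the spin stratum classes $[\overline{\mathcal H}_g(\mu)]^{\spin}$ for $k=1$ in the range~\eqref{eq:range1}, plugs them into the right-hand side of~\eqref{eq:spin_pixton}, and checks that the resulting tautological class matches $\DR_g^{\spin}(a_\mu)$ computed directly from Pixton's formula. There is no argument in the paper establishing the identity in general, so there is nothing to compare your attempt against; you are proving something the paper deliberately leaves open.

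On the merits of the attempt itself, the overall blueprint (refine the log/Abel--Jacobi proof of Theorem~\ref{thm:bigthm} to the double cover $\overline{\mathcal S}_{g,n}\to\overline{\mathcal M}_{g,n}$ and match parities through the multi-scale boundary) is a plausible program, but as written it has gaps you should flag. First, Proposition~\ref{prop:1.1} and Corollary~\ref{cor:1.3} are stated and proved in this paper only for $k=1$ using the welded-surface/Arf-invariant picture; the conjecture concerns general odd $k$, where the spin parity is defined via the theta characteristic $\omega_X^{\otimes(1-k)/2}\otimes\mathcal O_X(\tfrac12\divisor\eta)$ rather than the flat-surface index, and the degeneration analysis for $k$-differentials (canonical covers, prongs of order $k\kappa_e$, level rotations by $2\pi/k\kappa_e$) is not done here. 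You cannot invoke Corollary~\ref{cor:1.3} to kill the even-enhancement contributions on the $k$-differential side without first proving its $k$-analogue. Second, the factor $2^{-(g-h^1(\Gamma))}$ does not ``drop out of standard combinatorics''; in \cite{costantini2021integrals} it is built into the very definition of $P^{r,\spin,\bullet}_g$ and keeping track of it through a spin-equivariant log resolution is precisely the part you label as ``the real technical work.'' Until that compatibility of the level-rotation torus action with the $2$-torsion of the Jacobian is actually established, the sketch is not a proof. Given that the source paper itself only offers numerical evidence, your write-up should present this as a program, not as a theorem.
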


The left hand side of \eqref{eq:spin_pixton} is already computable in \texttt{admcycles} but the right hand side requires us to compute the spin stratum classes of strata of $k$-differentials. For $k=1$, the algorithms we developed can help compute the right hand side so that we can verify the conjecture for plenty of signatures $\mu=(m_1,...,m_n)$ ($\sum_im_i=2g-2$) of meromorphic strata where $(g,n)$ equal:
\begin{align*}
(2,2),(2,3),(2,4),(2,5),(3,2),(3,3),(4,2).    
\end{align*}

Note that on the right hand side of \eqref{eq:spin_pixton}, the contribution of a vertex on the top level of a simple star graph is just the spin stratum class of some holomorphic stratum of $1$-differential. Thus, if we assume the conjecture to be true, then we can first use our algorithm to compute the spin stratum classes of strata of holomorphic $1$-differentials and compute the spin stratum classes of strata of meromorphic $k$-differentials by recursively solving the equation \eqref{eq:spin_pixton}. 

In \texttt{admcycles}, one can directly call the function \texttt{Strataclass} to compute the spin stratum class for a signature of even type. Moreover, one can also use the keyword argument \texttt{spin\_conj=True} to apply Conjecture~\ref{conj:spin_pixton} to the computation of the spin stratum class.
\begin{lstlisting}
sage: from admcycles import *
sage: cl1=Strataclass(1,1,(6,-4),spin=True,spin_conj=True)
sage: cl2=Strataclass(1,1,(6,-4),spin=True)
sage: cl1 == cl2
True
\end{lstlisting}

\appendix
\section{Examples and \texttt{Sage} commands}

\subsection{Example: Applying the clutching pullback formula to $[\overline{\mathcal{H}}_1(4,-2,-2)]$}\label{sec:exa_clutch}
Before we start to consider the clutching pullback explicitly, let us introduce the notation of boundary strata of $\overline{\mathcal{M}}_{g,n}$:
\begin{itemize}
    \item Consider $P\subseteq I=\{1,...,n\}$ and $g\geq q\geq 0$, we use $S_q^P$ to denote the boundary stratum of curves with one node and the markings in $P$ lie in the component of genus $q$. Notice that $S_q^P=S_{g-q}^{P^c}$, where $P^c$ is the complement of $P$. We use $\delta_{q}^{P}$ to denote the fundamental class of the corresponding boundary stratum and $\Gamma_q^{P}$ the corresponding dual graph.
    \item $S_0$ refers to the boundary stratum parametrizing curves with exactly one self-node and $\delta_0$ is referred to as its fundamental class. We denote the dual graph of this boundary stratum by $\Gamma_0$.
\end{itemize}
Furthermore, recall that on $\mathbb{P}\Xi\overline{\mathcal{M}}_{g,n}(\mu)$, a boundary stratum is parametrised by some enhanced level graph $\Delta$. A codimension one boundary stratum of $\mathbb{P}\Xi\overline{\mathcal{M}}_{g,n}(\mu)$ is parametrized by some enhanced level graph $\Delta$ that has exactly one horizontal edge or is vertical with two levels. We use the following notations to denote the divisor corresponding to a one-edge horizontal graph:
\begin{itemize}
    \item[(i)] $D_h$ is referred to as the divisor corresponding to the boundary stratum corresponding to the self-loop graph;
    \item[(ii)] $D_{q}^P$ is referred to as the divisor with respect to the boundary stratum associated to the one-edge horizontal level graph of compact type such that the markings in $P$ lie in the component of genus $q$;
    \item[(iii)] We will also use $\Gamma_{q}^{P}$ and $\Gamma_0$ to denote the level graph of the divisors in (i) and (ii).
\end{itemize}

We now consider the stratum class $[\overline{\mathcal{H}}_1(4,-2,-2)]\in H^{2}(\overline{\mathcal{M}}_{1,3})$. The boundary divisors of the moduli space $\mathbb{P}\Xi \overline{\mathcal{M}}_{1,3}(4,-2,-2)$ are parametrized by vertical two-level graphs or one-edge horizontal graphs. We now consider the clutching pullback of $[\overline{\mathcal{H}}_1(4,-2,-2)]$ with respect to the one-edge stable graphs. The vertical two-level graphs and one-edge horizontal graphs which have some $\Gamma_0$-structures are:
\begin{center}
    \begin{tikzpicture}
    \begin{scope}[xshift=0cm,yshift=0cm]
        \node (a) at (0,0) [circle,draw,inner sep=0pt,fill,minimum size=3pt]{};
        \node (b) at (0,-2) [circle,draw,fill,inner sep=0pt,minimum size=3pt]{};
        \draw (a).. controls (-0.6,-0.75) and (-0.6,-1.5).. (b);
         \draw (a).. controls (0.6,-0.75) and (0.6,-1.5).. (b);
        \path [-] (b) edge (-0.3,-2.3);
        \path [-] (b) edge (0.3,-2.3);
        \path[-] (a) edge (0,0.3);
        \node at (-1.3,-2.4){(A)};  \node at (-0.3,-2.5){$4$};
        \node at (0.3,-2.5){$-2_1$}; \node at (0,0.5){$-2_2$};
        \node at (-0.6,-0.5){$0$};
        \node at (0.6,-0.5){$0$};
        \node at (-0.6,-1.8){$-2$};
        \node at (0.6,-1.8){$-2$};
        \node(0) at (2,1){};
    \end{scope}
    \begin{scope}[xshift=4cm,yshift=0cm]
        \node (a) at (0,0) [circle,draw,inner sep=0pt,fill,minimum size=3pt]{};
        \node (b) at (0,-2) [circle,draw,fill,inner sep=0pt,minimum size=3pt]{};
        \draw (a).. controls (-0.6,-0.75) and (-0.6,-1.5).. (b);
         \draw (a).. controls (0.6,-0.75) and (0.6,-1.5).. (b);
        \path [-] (b) edge (-0.3,-2.3);
        \path [-] (b) edge (0.3,-2.3);
        \path[-] (a) edge (0,0.3);
        \node at (-1.3,-2.4){(B)};  \node at (-0.3,-2.5){$4$};
        \node at (0.3,-2.5){$-2_2$}; \node at (0,0.5){$-2_1$};
        \node at (-0.6,-0.5){$0$};
        \node at (0.6,-0.5){$0$};
        \node at (-0.6,-1.8){$-2$};
        \node at (0.6,-1.8){$-2$};
       
    \end{scope}
    
     \begin{scope}[xshift=8cm,yshift=0cm]
        \node (a) at (0,0) [circle,draw,inner sep=0pt,fill,minimum size=3pt]{};
        \node (b) at (0,-2) [circle,draw,fill,inner sep=0pt,minimum size=3pt]{};
        \draw (a).. controls (-0.6,-0.75) and (-0.6,-1.5).. (b);
         \draw (a).. controls (0.6,-0.75) and (0.6,-1.5).. (b);
        \path [-] (a) edge (-0.4,0.3);
        \path [-] (a) edge (0.4,0.3);
        \path[-] (b) edge (0,-2.3);
        \node at (-1.3,-2.4){(C)};  \node at (0,-2.5){$4$};
        \node at (0.4,0.5){$-2_2$}; \node at (-0.4,0.5){$-2_1$};
        \node at (-0.6,-0.5){$2$};
        \node at (0.6,-0.5){$0$};
        \node at (-0.6,-1.8){$-4$};
        \node at (0.6,-1.8){$-2$};
       
    \end{scope}
    \begin{scope}[xshift=0cm,yshift=-4cm]
        \node (a) at (0,0) [circle,draw,inner sep=0pt,fill,minimum size=3pt]{};
        \node (b) at (0,-2) [circle,draw,fill,inner sep=0pt,minimum size=3pt]{};
        \draw (a).. controls (-0.6,-0.75) and (-0.6,-1.5).. (b);
         \draw (a).. controls (0.6,-0.75) and (0.6,-1.5).. (b);
        \path [-] (a) edge (-0.4,0.3);
        \path [-] (a) edge (0.4,0.3);
        \path[-] (b) edge (0,-2.3);
        \node at (-1.3,-2.4){(D)};  \node at (0,-2.5){$4$};
        \node at (0.4,0.5){$-2_2$}; \node at (-0.4,0.5){$-2_1$};
        \node at (-0.6,-0.5){$1$};
        \node at (0.6,-0.5){$1$};
        \node at (-0.6,-1.8){$-3$};
        \node at (0.6,-1.8){$-3$};
       
    \end{scope}
    
       \begin{scope}[xshift=4cm,yshift=-5cm]
         \node (a) at (0,0) [circle,draw,fill,inner sep=0pt,minimum size=3pt]{};
         \draw (a).. controls (1,1) and (1,-1).. (a);
         \node at (0,-0.7){$4$}; 
         \node at (-0.4,0.5){$-2$}; 
         \node at (-0.4,-0.5){$-2$}; 
         \node at (0.5,0.5){$-1$};
         \node at (0.5,-0.5){$-1$}; 
         \path [-] (a) edge (0,-0.6);
         \path [-] (a) edge (-0.3,-0.4);
         \path [-] (a) edge (-0.3,0.4);
         \node at (-1,-1.4){(E)};
    \end{scope}
    \end{tikzpicture}
\end{center}
In this example, $\overline{\mathcal{M}}_{\Gamma_0}=\overline{\mathcal{M}}_{0,5}$. The contribution of each of the above vertical two-level graphs or one-edge horizontal graphs to $\xi^*_{\Gamma_0}[\overline{\mathcal{H}}_{1}(4,-2,-2)]$ are:
\begin{itemize}
    \item[(A)] There are four $\Gamma_0$-structures on Graph $A$. They map the $4$-th and $5$-th markings of $\overline{\mathcal{M}}_{0,5}$ to any one of the vertical edges and each edge gives us two possibilities to allocate the markings. Hence, its contribution of is
    \begin{center}
  \begin{tabular}{c@{\hskip 0.2 cm}c@{\hskip 0.2 cm}c@{\hskip 0.2 cm}c}
    $\frac{1}{2}\cdot\frac{1}{1}\cdot\frac{1}{1} \Bigg[\begin{tikzpicture}[->,baseline=-3pt,node distance=1.3cm,thick,main node/.style={circle,draw,font=\Large,scale=0.5}]
\node at (0,0) (C) {};
\node [scale=.3,draw,circle,fill] [above of =C] (A) {};
\node [scale=.3,draw,circle,fill] [below of =C] (B) {};
\node at (-0.5,-0.7) (n5) {5};
\node at (0.5,-0.7) (n2) {2};
\node at (0.0,-1.0) (n1) {1};
\node at (-0.5,0.7) (n4) {4};
\node at (0.5,0.7) (n3) {3};
\draw [-] (A) to (B);
\draw [-] (B) to (n1);
\draw [-] (B) to (n2);
\draw [-] (B) to (n5);
\draw [-] (A) to (n3);
\draw [-] (A) to (n4);
\end{tikzpicture}\Bigg]$
& 
    +$\frac{1}{2}\cdot\frac{1}{1}\cdot\frac{1}{1} \Bigg[\begin{tikzpicture}[->,baseline=-3pt,node distance=1.3cm,thick,main node/.style={circle,draw,font=\Large,scale=0.5}]
\node at (0,0) (C) {};
\node [scale=.3,draw,circle,fill] [above of =C] (A) {};
\node [scale=.3,draw,circle,fill] [below of =C] (B) {};
\node at (-0.5,-0.7) (n5) {4};
\node at (0.5,-0.7) (n2) {2};
\node at (0.0,-1.0) (n1) {1};
\node at (-0.5,0.7) (n4) {5};
\node at (0.5,0.7) (n3) {3};
\draw [-] (A) to (B);
\draw [-] (B) to (n1);
\draw [-] (B) to (n2);
\draw [-] (B) to (n5);
\draw [-] (A) to (n3);
\draw [-] (A) to (n4);
\end{tikzpicture}\Bigg]$
& 
     $ + \frac{1}{2}\cdot\frac{1}{1}\cdot\frac{1}{1} \Bigg[\begin{tikzpicture}[->,baseline=-3pt,node distance=1.3cm,thick,main node/.style={circle,draw,font=\Large,scale=0.5}]
\node at (0,0) (C) {};
\node [scale=.3,draw,circle,fill] [above of =C] (A) {};
\node [scale=.3,draw,circle,fill] [below of =C] (B) {};
\node at (-0.5,-0.7) (n5) {1};
\node at (0.5,-0.7) (n2) {5};
\node at (0.0,-1.0) (n1) {2};
\node at (-0.5,0.7) (n4) {3};
\node at (0.5,0.7) (n3) {4};
\draw [-] (A) to (B);
\draw [-] (B) to (n1);
\draw [-] (B) to (n2);
\draw [-] (B) to (n5);
\draw [-] (A) to (n3);
\draw [-] (A) to (n4);
\end{tikzpicture}\Bigg]$
&
 $ + \frac{1}{2}\cdot\frac{1}{1}\cdot\frac{1}{1} \Bigg[\begin{tikzpicture}[->,baseline=-3pt,node distance=1.3cm,thick,main node/.style={circle,draw,font=\Large,scale=0.5}]
\node at (0,0) (C) {};
\node [scale=.3,draw,circle,fill] [above of =C] (A) {};
\node [scale=.3,draw,circle,fill] [below of =C] (B) {};
\node at (-0.5,-0.7) (n5) {1};
\node at (0.5,-0.7) (n2) {4};
\node at (0.0,-1.0) (n1) {2};
\node at (-0.5,0.7) (n4) {3};
\node at (0.5,0.7) (n3) {5};
\draw [-] (A) to (B);
\draw [-] (B) to (n1);
\draw [-] (B) to (n2);
\draw [-] (B) to (n5);
\draw [-] (A) to (n3);
\draw [-] (A) to (n4);
\end{tikzpicture}\Bigg]$\\

$= \Bigg[\begin{tikzpicture}[->,baseline=-3pt,node distance=1.3cm,thick,main node/.style={circle,draw,font=\Large,scale=0.5}]
\node at (0,0) (C) {};
\node [scale=.3,draw,circle,fill] [above of =C] (A) {};
\node [scale=.3,draw,circle,fill] [below of =C] (B) {};
\node at (-0.5,-0.7) (n5) {5};
\node at (0.5,-0.7) (n2) {2};
\node at (0.0,-1.0) (n1) {1};
\node at (-0.5,0.7) (n4) {4};
\node at (0.5,0.7) (n3) {3};
\draw [-] (A) to (B);
\draw [-] (B) to (n1);
\draw [-] (B) to (n2);
\draw [-] (B) to (n5);
\draw [-] (A) to (n3);
\draw [-] (A) to (n4);
\end{tikzpicture}\Bigg]$
& 
    +$ \Bigg[\begin{tikzpicture}[->,baseline=-3pt,node distance=1.3cm,thick,main node/.style={circle,draw,font=\Large,scale=0.5}]
\node at (0,0) (C) {};
\node [scale=.3,draw,circle,fill] [above of =C] (A) {};
\node [scale=.3,draw,circle,fill] [below of =C] (B) {};
\node at (-0.5,-0.7) (n5) {4};
\node at (0.5,-0.7) (n2) {2};
\node at (0.0,-1.0) (n1) {1};
\node at (-0.5,0.7) (n4) {5};
\node at (0.5,0.7) (n3) {3};
\draw [-] (A) to (B);
\draw [-] (B) to (n1);
\draw [-] (B) to (n2);
\draw [-] (B) to (n5);
\draw [-] (A) to (n3);
\draw [-] (A) to (n4);
\end{tikzpicture}\Bigg]$
  \end{tabular}
\end{center}

\item[(B)] Similarly, the contribution of Graph (B) is
    \begin{center}
  \begin{tabular}{c@{\hskip 0.2 cm}c@{\hskip 0.2 cm}c@{\hskip 0.2 cm}c}
    $\frac{1}{2}\cdot\frac{1}{1}\cdot\frac{1}{1} \Bigg[\begin{tikzpicture}[->,baseline=-3pt,node distance=1.3cm,thick,main node/.style={circle,draw,font=\Large,scale=0.5}]
\node at (0,0) (C) {};
\node [scale=.3,draw,circle,fill] [above of =C] (A) {};
\node [scale=.3,draw,circle,fill] [below of =C] (B) {};
\node at (-0.5,-0.7) (n5) {5};
\node at (0.5,-0.7) (n2) {3};
\node at (0.0,-1.0) (n1) {1};
\node at (-0.5,0.7) (n4) {4};
\node at (0.5,0.7) (n3) {2};
\draw [-] (A) to (B);
\draw [-] (B) to (n1);
\draw [-] (B) to (n2);
\draw [-] (B) to (n5);
\draw [-] (A) to (n3);
\draw [-] (A) to (n4);
\end{tikzpicture}\Bigg]$
& 
    $+\frac{1}{2}\cdot\frac{1}{1}\cdot\frac{1}{1} \Bigg[\begin{tikzpicture}[->,baseline=-3pt,node distance=1.3cm,thick,main node/.style={circle,draw,font=\Large,scale=0.5}]
\node at (0,0) (C) {};
\node [scale=.3,draw,circle,fill] [above of =C] (A) {};
\node [scale=.3,draw,circle,fill] [below of =C] (B) {};
\node at (-0.5,-0.7) (n5) {4};
\node at (0.5,-0.7) (n2) {3};
\node at (0.0,-1.0) (n1) {1};
\node at (-0.5,0.7) (n4) {5};
\node at (0.5,0.7) (n3) {2};
\draw [-] (A) to (B);
\draw [-] (B) to (n1);
\draw [-] (B) to (n2);
\draw [-] (B) to (n5);
\draw [-] (A) to (n3);
\draw [-] (A) to (n4);
\end{tikzpicture}\Bigg]$
& 
     $ + \frac{1}{2}\cdot\frac{1}{1}\cdot\frac{1}{1} \Bigg[\begin{tikzpicture}[->,baseline=-3pt,node distance=1.3cm,thick,main node/.style={circle,draw,font=\Large,scale=0.5}]
\node at (0,0) (C) {};
\node [scale=.3,draw,circle,fill] [above of =C] (A) {};
\node [scale=.3,draw,circle,fill] [below of =C] (B) {};
\node at (-0.5,-0.7) (n5) {1};
\node at (0.5,-0.7) (n2) {5};
\node at (0.0,-1.0) (n1) {3};
\node at (-0.5,0.7) (n4) {2};
\node at (0.5,0.7) (n3) {4};
\draw [-] (A) to (B);
\draw [-] (B) to (n1);
\draw [-] (B) to (n2);
\draw [-] (B) to (n5);
\draw [-] (A) to (n3);
\draw [-] (A) to (n4);
\end{tikzpicture}\Bigg]$
&
 $ + \frac{1}{2}\cdot\frac{1}{1}\cdot\frac{1}{1} \Bigg[\begin{tikzpicture}[->,baseline=-3pt,node distance=1.3cm,thick,main node/.style={circle,draw,font=\Large,scale=0.5}]
\node at (0,0) (C) {};
\node [scale=.3,draw,circle,fill] [above of =C] (A) {};
\node [scale=.3,draw,circle,fill] [below of =C] (B) {};
\node at (-0.5,-0.7) (n5) {1};
\node at (0.5,-0.7) (n2) {4};
\node at (0.0,-1.0) (n1) {3};
\node at (-0.5,0.7) (n4) {2};
\node at (0.5,0.7) (n3) {5};
\draw [-] (A) to (B);
\draw [-] (B) to (n1);
\draw [-] (B) to (n2);
\draw [-] (B) to (n5);
\draw [-] (A) to (n3);
\draw [-] (A) to (n4);
\end{tikzpicture}\Bigg]$\\

$= \Bigg[\begin{tikzpicture}[->,baseline=-3pt,node distance=1.3cm,thick,main node/.style={circle,draw,font=\Large,scale=0.5}]
\node at (0,0) (C) {};
\node [scale=.3,draw,circle,fill] [above of =C] (A) {};
\node [scale=.3,draw,circle,fill] [below of =C] (B) {};
\node at (-0.5,-0.7) (n5) {5};
\node at (0.5,-0.7) (n2) {3};
\node at (0.0,-1.0) (n1) {1};
\node at (-0.5,0.7) (n4) {4};
\node at (0.5,0.7) (n3) {2};
\draw [-] (A) to (B);
\draw [-] (B) to (n1);
\draw [-] (B) to (n2);
\draw [-] (B) to (n5);
\draw [-] (A) to (n3);
\draw [-] (A) to (n4);
\end{tikzpicture}\Bigg]$
& 
    +$ \Bigg[\begin{tikzpicture}[->,baseline=-3pt,node distance=1.3cm,thick,main node/.style={circle,draw,font=\Large,scale=0.5}]
\node at (0,0) (C) {};
\node [scale=.3,draw,circle,fill] [above of =C] (A) {};
\node [scale=.3,draw,circle,fill] [below of =C] (B) {};
\node at (-0.5,-0.7) (n5) {4};
\node at (0.5,-0.7) (n2) {3};
\node at (0.0,-1.0) (n1) {1};
\node at (-0.5,0.7) (n4) {5};
\node at (0.5,0.7) (n3) {2};
\draw [-] (A) to (B);
\draw [-] (B) to (n1);
\draw [-] (B) to (n2);
\draw [-] (B) to (n5);
\draw [-] (A) to (n3);
\draw [-] (A) to (n4);
\end{tikzpicture}\Bigg]$
  \end{tabular}
\end{center}

\item[(C)] Graph (C) has also four $\Gamma_0$-structures but they are not symmetric because the two vertical edges are not interchangeable. For the $\Gamma_0$-structures by which the edge of $\Gamma_0$ is mapped to the left edge of the level graph, $\kappa_{f}=3$. If the edge of $\Gamma_0$ is mapped to another edge, then $\kappa_{f}=1$. Thus, the contribution of Graph (C) is
    \begin{center}
  \begin{tabular}{c@{\hskip 0.2 cm}c@{\hskip 0.2 cm}c@{\hskip 0.2 cm}c}
    $\frac{1}{1}\cdot\frac{3}{3}\cdot\frac{3}{3} \Bigg[\begin{tikzpicture}[->,baseline=-3pt,node distance=1.3cm,thick,main node/.style={circle,draw,font=\Large,scale=0.5}]
\node at (0,0) (C) {};
\node [scale=.3,draw,circle,fill] [above of =C] (A) {};
\node [scale=.3,draw,circle,fill] [below of =C] (B) {};
\node at (0.5,-0.7) (n1) {1};
\node at (-0.5,-0.7) (n5) {5};
\node at (0,1.0) (n2) {2};
\node at (-0.5,0.7) (n4) {4};
\node at (0.5,0.7) (n3) {3};
\draw [-] (A) to (B);
\draw [-] (B) to (n1);
\draw [-] (A) to (n2);
\draw [-] (B) to (n5);
\draw [-] (A) to (n3);
\draw [-] (A) to (n4);
\end{tikzpicture}\Bigg]$
& 
    $+\frac{1}{1}\cdot\frac{3}{3}\cdot\frac{3}{3} \Bigg[\begin{tikzpicture}[->,baseline=-3pt,node distance=1.3cm,thick,main node/.style={circle,draw,font=\Large,scale=0.5}]
\node at (0,0) (C) {};
\node [scale=.3,draw,circle,fill] [above of =C] (A) {};
\node [scale=.3,draw,circle,fill] [below of =C] (B) {};
\node at (0.5,-0.7) (n1) {1};
\node at (-0.5,-0.7) (n5) {4};
\node at (0,1.0) (n2) {2};
\node at (-0.5,0.7) (n4) {5};
\node at (0.5,0.7) (n3) {3};
\draw [-] (A) to (B);
\draw [-] (B) to (n1);
\draw [-] (A) to (n2);
\draw [-] (B) to (n5);
\draw [-] (A) to (n3);
\draw [-] (A) to (n4);
\end{tikzpicture}\Bigg]$

&  $+\frac{1}{1}\cdot\frac{3}{1}\cdot\frac{3}{3} \Bigg[\begin{tikzpicture}[->,baseline=-3pt,node distance=1.3cm,thick,main node/.style={circle,draw,font=\Large,scale=0.5}]
\node at (0,0) (C) {};
\node [scale=.3,draw,circle,fill] [above of =C] (A) {};
\node [scale=.3,draw,circle,fill] [below of =C] (B) {};
\node at (0.5,-0.7) (n1) {5};
\node at (-0.5,-0.7) (n5) {1};
\node at (0,1.0) (n2) {3};
\node at (-0.5,0.7) (n4) {2};
\node at (0.5,0.7) (n3) {4};
\draw [-] (A) to (B);
\draw [-] (B) to (n1);
\draw [-] (A) to (n2);
\draw [-] (B) to (n5);
\draw [-] (A) to (n3);
\draw [-] (A) to (n4);
\end{tikzpicture}\Bigg]$

&  $+\frac{1}{1}\cdot\frac{3}{1}\cdot\frac{3}{3} \Bigg[\begin{tikzpicture}[->,baseline=-3pt,node distance=1.3cm,thick,main node/.style={circle,draw,font=\Large,scale=0.5}]
\node at (0,0) (C) {};
\node [scale=.3,draw,circle,fill] [above of =C] (A) {};
\node [scale=.3,draw,circle,fill] [below of =C] (B) {};
\node at (0.5,-0.7) (n1) {4};
\node at (-0.5,-0.7) (n5) {1};
\node at (0,1.0) (n2) {3};
\node at (-0.5,0.7) (n4) {2};
\node at (0.5,0.7) (n3) {5};
\draw [-] (A) to (B);
\draw [-] (B) to (n1);
\draw [-] (A) to (n2);
\draw [-] (B) to (n5);
\draw [-] (A) to (n3);
\draw [-] (A) to (n4);
\end{tikzpicture}\Bigg]$ \\

 $=4 \Bigg[\begin{tikzpicture}[->,baseline=-3pt,node distance=1.3cm,thick,main node/.style={circle,draw,font=\Large,scale=0.5}]
\node at (0,0) (C) {};
\node [scale=.3,draw,circle,fill] [above of =C] (A) {};
\node [scale=.3,draw,circle,fill] [below of =C] (B) {};
\node at (0.5,-0.7) (n1) {1};
\node at (-0.5,-0.7) (n5) {5};
\node at (0,1.0) (n2) {2};
\node at (-0.5,0.7) (n4) {4};
\node at (0.5,0.7) (n3) {3};
\draw [-] (A) to (B);
\draw [-] (B) to (n1);
\draw [-] (A) to (n2);
\draw [-] (B) to (n5);
\draw [-] (A) to (n3);
\draw [-] (A) to (n4);
\end{tikzpicture}\Bigg]$
& 
$+4 \Bigg[\begin{tikzpicture}[->,baseline=-3pt,node distance=1.3cm,thick,main node/.style={circle,draw,font=\Large,scale=0.5}]
\node at (0,0) (C) {};
\node [scale=.3,draw,circle,fill] [above of =C] (A) {};
\node [scale=.3,draw,circle,fill] [below of =C] (B) {};
\node at (0.5,-0.7) (n1) {1};
\node at (-0.5,-0.7) (n5) {4};
\node at (0,1.0) (n2) {2};
\node at (-0.5,0.7) (n4) {5};
\node at (0.5,0.7) (n3) {3};
\draw [-] (A) to (B);
\draw [-] (B) to (n1);
\draw [-] (A) to (n2);
\draw [-] (B) to (n5);
\draw [-] (A) to (n3);
\draw [-] (A) to (n4);
\end{tikzpicture}\Bigg]$

  \end{tabular}
\end{center}

\item[(D)] The contribution of Graph (D) is
    \begin{center}
  \begin{tabular}{c@{\hskip 0.2 cm}c@{\hskip 0.2 cm}c@{\hskip 0.2 cm}c}
    $\frac{1}{2}\cdot\frac{2}{2}\cdot\frac{4}{2} \Bigg[\begin{tikzpicture}[->,baseline=-3pt,node distance=1.3cm,thick,main node/.style={circle,draw,font=\Large,scale=0.5}]
\node at (0,0) (C) {};
\node [scale=.3,draw,circle,fill] [above of =C] (A) {};
\node [scale=.3,draw,circle,fill] [below of =C] (B) {};
\node at (0.5,-0.7) (n1) {1};
\node at (-0.5,-0.7) (n5) {5};
\node at (0,1.0) (n2) {2};
\node at (-0.5,0.7) (n4) {4};
\node at (0.5,0.7) (n3) {3};
\draw [-] (A) to (B);
\draw [-] (B) to (n1);
\draw [-] (A) to (n2);
\draw [-] (B) to (n5);
\draw [-] (A) to (n3);
\draw [-] (A) to (n4);
\end{tikzpicture}\Bigg]$
& 
    $+\frac{1}{2}\cdot\frac{2}{2}\cdot\frac{4}{2} \Bigg[\begin{tikzpicture}[->,baseline=-3pt,node distance=1.3cm,thick,main node/.style={circle,draw,font=\Large,scale=0.5}]
\node at (0,0) (C) {};
\node [scale=.3,draw,circle,fill] [above of =C] (A) {};
\node [scale=.3,draw,circle,fill] [below of =C] (B) {};
\node at (0.5,-0.7) (n1) {1};
\node at (-0.5,-0.7) (n5) {4};
\node at (0,1.0) (n2) {2};
\node at (-0.5,0.7) (n4) {5};
\node at (0.5,0.7) (n3) {3};
\draw [-] (A) to (B);
\draw [-] (B) to (n1);
\draw [-] (A) to (n2);
\draw [-] (B) to (n5);
\draw [-] (A) to (n3);
\draw [-] (A) to (n4);
\end{tikzpicture}\Bigg]$

&  $+\frac{1}{2}\cdot\frac{2}{2}\cdot\frac{4}{2} \Bigg[\begin{tikzpicture}[->,baseline=-3pt,node distance=1.3cm,thick,main node/.style={circle,draw,font=\Large,scale=0.5}]
\node at (0,0) (C) {};
\node [scale=.3,draw,circle,fill] [above of =C] (A) {};
\node [scale=.3,draw,circle,fill] [below of =C] (B) {};
\node at (0.5,-0.7) (n1) {5};
\node at (-0.5,-0.7) (n5) {1};
\node at (0,1.0) (n2) {3};
\node at (-0.5,0.7) (n4) {2};
\node at (0.5,0.7) (n3) {4};
\draw [-] (A) to (B);
\draw [-] (B) to (n1);
\draw [-] (A) to (n2);
\draw [-] (B) to (n5);
\draw [-] (A) to (n3);
\draw [-] (A) to (n4);
\end{tikzpicture}\Bigg]$

&  $+\frac{1}{2}\cdot\frac{2}{2}\cdot\frac{4}{2} \Bigg[\begin{tikzpicture}[->,baseline=-3pt,node distance=1.3cm,thick,main node/.style={circle,draw,font=\Large,scale=0.5}]
\node at (0,0) (C) {};
\node [scale=.3,draw,circle,fill] [above of =C] (A) {};
\node [scale=.3,draw,circle,fill] [below of =C] (B) {};
\node at (0.5,-0.7) (n1) {4};
\node at (-0.5,-0.7) (n5) {1};
\node at (0,1.0) (n2) {3};
\node at (-0.5,0.7) (n4) {2};
\node at (0.5,0.7) (n3) {5};
\draw [-] (A) to (B);
\draw [-] (B) to (n1);
\draw [-] (A) to (n2);
\draw [-] (B) to (n5);
\draw [-] (A) to (n3);
\draw [-] (A) to (n4);
\end{tikzpicture}\Bigg]$ \\

 $=2 \Bigg[\begin{tikzpicture}[->,baseline=-3pt,node distance=1.3cm,thick,main node/.style={circle,draw,font=\Large,scale=0.5}]
\node at (0,0) (C) {};
\node [scale=.3,draw,circle,fill] [above of =C] (A) {};
\node [scale=.3,draw,circle,fill] [below of =C] (B) {};
\node at (0.5,-0.7) (n1) {1};
\node at (-0.5,-0.7) (n5) {5};
\node at (0,1.0) (n2) {2};
\node at (-0.5,0.7) (n4) {4};
\node at (0.5,0.7) (n3) {3};
\draw [-] (A) to (B);
\draw [-] (B) to (n1);
\draw [-] (A) to (n2);
\draw [-] (B) to (n5);
\draw [-] (A) to (n3);
\draw [-] (A) to (n4);
\end{tikzpicture}\Bigg]$
& 
$+2 \Bigg[\begin{tikzpicture}[->,baseline=-3pt,node distance=1.3cm,thick,main node/.style={circle,draw,font=\Large,scale=0.5}]
\node at (0,0) (C) {};
\node [scale=.3,draw,circle,fill] [above of =C] (A) {};
\node [scale=.3,draw,circle,fill] [below of =C] (B) {};
\node at (0.5,-0.7) (n1) {1};
\node at (-0.5,-0.7) (n5) {4};
\node at (0,1.0) (n2) {2};
\node at (-0.5,0.7) (n4) {5};
\node at (0.5,0.7) (n3) {3};
\draw [-] (A) to (B);
\draw [-] (B) to (n1);
\draw [-] (A) to (n2);
\draw [-] (B) to (n5);
\draw [-] (A) to (n3);
\draw [-] (A) to (n4);
\end{tikzpicture}\Bigg]$

  \end{tabular}
\end{center}

\item[(E)] The contribution of the graph (E) will be just the class $[\overline{\mathcal{H}}_0^\mathfrak{R}(4,-2,-2,-1,-1) ]$ where $\mathfrak{R}$ is induced by the residue condition $r_4+r_5=0$.
\end{itemize}
To check whether we are correct, we can use the method \texttt{totensorTautbasis} in the \texttt{Sage} package \texttt{admcycles} (the commands cf. \cite{delecroix2020admcycles}). We first compute the clutching pullback:

\begin{lstlisting}
sage: from admcycles import *
sage: stratum_class = Strataclass(1,1,(4,-2,-2))
sage: stgraph = StableGraph([0],[[1,2,3,4,5]],[(4,5)])
sage: pull = stgraph.boundary_pullback(stratum_class)
sage: pull.totensorTautbasis(1,vecout=True)
(10, 5, -7, -7, 0)
\end{lstlisting}
On the other hand, we compute the result we get from the clutching pullback formula:

\begin{lstlisting}
sage: A_contr=tautgens(0,5,1)[8]+tautgens(0,5,1)[7]
sage: B_contr=tautgens(0,5,1)[11]+tautgens(0,5,1)[10]
sage: C_contr=4*tautgens(0,5,1)[15]+4*tautgens(0,5,1)[14]
sage: D_contr=2*tautgens(0,5,1)[15]+2*tautgens(0,5,1)[14]
sage: E_contr= Strataclass(0,1,(4,-2,-2,-1,-1), res_cond=((0,0,0,1,1),))
sage: (A_contr+B_contr+C_contr+D_contr+E_contr).basis_vector()
(10, 5, -7, -7, 0)
\end{lstlisting}

Now we will do the clutching pullback with respect to other one-edge stable graphs. There is only one vertical two-level graph (or one-edge horizontal graph) which is degenerations to $\Gamma_1^{\{1\}}$, namely

\begin{center}
    \begin{tikzpicture}
    
        \node (a) at (0,0) [circle,draw,fill,inner sep=0pt,minimum size=3pt]{};
    \node (b) at (0,-2) [circle,draw,inner sep=0pt,minimum size=5pt]{$1$};
    
    \path [-] (a) edge (b);
   \node at (0.2,-0.2){$2$} ;
    \node at (-0.6,0.6){$-2$} ;
    \node at (0.6,0.6){$-2$} ;
    \node at (0.4,-1.8){$-4$} ;
    \path [-] (a) edge (-0.5,0.5);
    \path [-] (a) edge (0.5,0.5);
    \path [-] (b) edge (0,-2.5);
    \node at (0,-2.5){$4$};
    \node at (-1,-2.4){(F)};
    \end{tikzpicture}
\end{center}
Hence, $\xi_{\Gamma_1^{\{1\}}}^*([\overline{\mathcal{H}}_1(4,-2,-2)])\in CH^1(\overline{\mathcal{M}}_{\Gamma_1^{\{1\}}})$ will be equal to the following
\begin{center}
 
    $\frac{1}{1}\cdot\frac{3}{3}\cdot\frac{3}{3}\bigg(\pi_\top^*([\overline{\mathcal{M}}_{0,3}])\cdot \pi_\bot^*([\overline{\mathcal{H}}_1(4,-4)])\bigg)= 15\pi_\bot^*\psi_1
$

\end{center}
We have also the following vertical two-level graphs:
\begin{center}
\begin{tikzpicture}

    \node (a) at (0,0) [circle,draw,inner sep=0pt,minimum size=5pt]{$1$};
    \node (b) at (0,-2) [circle,draw,fill,inner sep=0pt,minimum size=3pt]{};
    
    \path [-] (a) edge (b);
   \node at (0.2,-0.3){$2$} ;
    \node at (-0.6,-2.6){$4$} ;
    \node at (0.6,-2.6){$-2_2$} ;
    \node at (0.4,-1.8){$-4$} ;
    \path [-] (b) edge (-0.5,-2.5);
    \path [-] (b) edge (0.5,-2.5);
    \path [-] (a) edge (0,0.5);
    \node at (0,0.5){$-2_1$};
    \node at (-1.6,-2.4){(G)};

    \begin{scope}[xshift=10cm]
        \node (a) at (0,0) [circle,draw,inner sep=0pt,minimum size=5pt]{$1$};
    \node (b) at (0,-2) [circle,draw,fill,inner sep=0pt,minimum size=3pt]{};
    
    \path [-] (a) edge (b);
   \node at (0.2,-0.3){$2$} ;
    \node at (-0.6,-2.6){$4$} ;
    \node at (0.6,-2.6){$-2_1$} ;
    \node at (0.4,-1.8){$-4$} ;
    \path [-] (b) edge (-0.5,-2.5);
    \path [-] (b) edge (0.5,-2.5);
    \path [-] (a) edge (0,0.5);
    \node at (0,0.5){$-2_2$};
    \node at (-1.6,-2.4){(H)}; 
    \end{scope}
    \begin{scope}[xshift=0cm,yshift=-5cm]
       \node (a) at (0,0) [circle,draw,inner sep=0pt,minimum size=5pt]{$1$};
    \node (b) at (0,-2) [circle,draw,fill,inner sep=0pt,minimum size=3pt]{};
    
    \path [-] (a) edge (b);
   \node at (0.2,-0.3){$0$} ;
    \node at (-0.6,-2.6){$4$} ;
    \node at (0.6,-2.6){$-2$} ;
    \node at (0.4,-1.8){$-2$} ;
    \path [-] (b) edge (-0.5,-2.5);
    \path [-] (b) edge (0.5,-2.5);
    \path [-] (b) edge (0,-2.5);
    \node at (0,-2.6){$-2$};
    \node at (-1.6,-2.4){(I)}; 
    \end{scope}
    
      \begin{scope}[xshift=10cm, yshift=-5cm]
        \node (a) at (-1.5,0) [circle,draw,inner sep=0pt,minimum size=5pt]{$1$};
        \node (b) at (1.5,0) [circle,draw,fill, inner sep=0pt,minimum size=3pt]{};
        \node (c) at (0,-2) [circle,draw,fill, inner sep=0pt,minimum size=3pt]{};
        \node at (0,-2.5){$4$};
        \path [-] (b) edge (1.1,0.4);
        \path [-] (b) edge (1.9,0.4);
        \node at (1,0.5){$-2$} ;
        \node at (2,0.5){$-2$} ;
        \path [-] (a) edge (c);
        \path [-] (b) edge (c);
        \node at (0.7,-1.8){$-4$} ;
        \node at (1.2,-0.2){$2$} ;
        \node at (-1.2,-0.2){$0$} ;
        \node at (-0.7,-1.8){$-2$} ;
        \node at (-1.3,-2.4){(J)};  
        \path [-] (c) edge (0,-2.3);
    \end{scope}
    \end{tikzpicture}
\end{center}
Graph (G) is a degeneration of $\Gamma_1^{\{2\}}$, while graph (H) is a degeneration of $\Gamma_1^{\{3\}}$. Similarly, we have
\begin{align*}
    \xi_{\Gamma_1^{\{2\}}}^*([\overline{\mathcal{H}}_1(4,-2,-2)])= \frac{1}{1}\cdot\frac{3}{3}\cdot\frac{3}{3}\bigg(\pi_\top^*([\overline{\mathcal{H}}_1(2,-2)])\cdot\pi_\bot^*([\overline{\mathcal{M}}_{0,3}]) \bigg)= 3\pi_\top^*\psi_1\\
    \xi_{\Gamma_1^{\{3\}}}^*([\overline{\mathcal{H}}_1(4,-2,-2)])= \frac{1}{1}\cdot\frac{3}{3}\cdot\frac{3}{3}\bigg(\pi_\top^*([\overline{\mathcal{H}}_1(2,-2)])\cdot\pi_\bot^*([\overline{\mathcal{M}}_{0,3}]) \bigg)= 3\pi_\top^*\psi_1\
\end{align*}
Graph (I) and (J) are degenerations of $\Gamma_1^\emptyset$. However, note that the top level of graph (J) has more than one vertices without any residue condition relating the legs of them. Thus, the projection pushforward of graph (J) is zero. Then we have
\[\xi_{\Gamma_1^\emptyset}^*([\overline{\mathcal{H}}_1(4,-2,-2)])= \frac{1}{1}\cdot\frac{1}{1}\cdot\frac{1}{1}\bigg(\pi_\top^*([\overline{\mathcal{M}}_{1,1})\cdot\pi_\bot^*([\overline{\mathcal{H}}_0^\mathfrak{R}(4,-2,-2,-2)]) \bigg)= \pi_\bot^*\psi_4, \]

where $\mathfrak{R}$ represent the residue condition of $r_4=0$.

\subsection{Examples of computing the spin stratum class by clutching pullbacks}\label{sec:exa_spin_pull}

In this subsection, we will present examples of computing the spin stratum class (with and without residue conditions). In the following example, we will consider the stratum $\mathcal{H}_{1}(4,-2,-2)]$ which has been investigated in Section~\ref{sec:exa_clutch}.

\begin{exa}\label{exa:cal_spin}
The two-level graphs of $\mathbb{P}\Xi \overline{\mathcal{M}}_{1,3}(4,-2,-2)$, which are of compact type, are

\begin{center}
    \begin{tikzpicture}
    \node (a) at (0,0) [circle,draw,fill,inner sep=0pt,minimum size=3pt]{};
    \node (b) at (0,-2) [circle,draw,inner sep=0pt,minimum size=5pt]{$1$};
    \path [-] (a) edge (b);
   \node at (0.2,-0.2){$2$} ;
    \node at (-0.6,0.6){$-2_1$} ;
    \node at (0.6,0.6){$-2_2$} ;
    \node at (0.4,-1.8){$-4$} ;
    \path [-] (a) edge (-0.5,0.5);
    \path [-] (a) edge (0.5,0.5);
    \path [-] (b) edge (0,-2.5);
    \node at (0,-2.5){$4$};
    \node at (-1,-2.4){(F)};
    
  \begin{scope}[xshift=5cm]
    \node (a) at (0,0) [circle,draw,inner sep=0pt,minimum size=5pt]{$1$};
    \node (b) at (0,-2) [circle,draw,fill,inner sep=0pt,minimum size=3pt]{};
    \path [-] (a) edge (b);
    \node at (0.2,-0.3){$2$} ;
    \node at (-0.6,-2.6){$4$} ;
    \node at (0.6,-2.6){$-2_2$} ;
    \node at (0.4,-1.8){$-4$} ;
    \path [-] (b) edge (-0.5,-2.5);
    \path [-] (b) edge (0.5,-2.5);
    \path [-] (a) edge (0,0.5);
    \node at (0,0.5){$-2_1$};
    \node at (-1.6,-2.4){(G)};   
\end{scope}  
    
    \begin{scope}[xshift=10cm]
    \node (a) at (0,0) [circle,draw,inner sep=0pt,minimum size=5pt]{$1$};
    \node (b) at (0,-2) [circle,draw,fill,inner sep=0pt,minimum size=3pt]{};
    \path [-] (a) edge (b);
    \node at (0.2,-0.3){$2$} ;
    \node at (-0.6,-2.6){$4$} ;
    \node at (0.6,-2.6){$-2_1$} ;
    \node at (0.4,-1.8){$-4$} ;
    \path [-] (b) edge (-0.5,-2.5);
    \path [-] (b) edge (0.5,-2.5);
    \path [-] (a) edge (0,0.5);
    \node at (0,0.5){$-2_2$};
    \node at (-1.6,-2.4){(H)}; 
    \end{scope}
    
    \begin{scope}[xshift=0cm,yshift=-5cm]
    \node (a) at (0,0) [circle,draw,inner sep=0pt,minimum size=5pt]{$1$};
    \node (b) at (0,-2) [circle,draw,fill,inner sep=0pt,minimum size=3pt]{};
    \path [-] (a) edge (b);
    \node at (0.2,-0.3){$0$} ;
    \node at (-0.6,-2.6){$4$} ;
    \node at (0.6,-2.6){$-2_2$} ;
    \node at (0.4,-1.8){$-2$} ;
    \path [-] (b) edge (-0.5,-2.5);
    \path [-] (b) edge (0.5,-2.5);
    \path [-] (b) edge (0,-2.5);
    \node at (0,-2.6){$-2_1$};
    \node at (-1.6,-2.4){(I)}; 
    \end{scope}
    
    \begin{scope}[xshift=10cm, yshift=-5cm]
    \node (a) at (-1.5,0) [circle,draw,inner sep=0pt,minimum size=5pt]{$1$};
    \node (b) at (1.5,0) [circle,draw,fill, inner sep=0pt,minimum size=3pt]{};
    \node (c) at (0,-2) [circle,draw,fill, inner sep=0pt,minimum size=3pt]{};
    \node at (0,-2.5){$4$};
    \path [-] (b) edge (1.1,0.4);
    \path [-] (b) edge (1.9,0.4);
    \node at (1,0.5){$-2_1$} ;
    \node at (2,0.5){$-2_2$} ;
    \path [-] (a) edge (c);
    \path [-] (b) edge (c);
    \node at (0.7,-1.8){$-4$} ;
    \node at (1.2,-0.2){$2$} ;
    \node at (-1.2,-0.2){$0$} ;
    \node at (-0.7,-1.8){$-2$} ;
    \node at (-1.3,-2.4){(J)};  
    \path [-] (c) edge (0,-2.3);
    \end{scope}
    \end{tikzpicture}
\end{center}

 Notice that the top level of the graph $(J)$ has two components which can be scaled independently. Thus $p_{J*}[\mathbb{P}\Xi\overline{\mathcal{M}}_J]^{\spin}=0$ as the image will be of lower dimension. Hence it will not affect the spin stratum class. 
 
 The pushforward of the spin variant of the divisor class of the boundary stratum, which is associated to the level graph (F),  on the moduli space of disconnected stable curves $\overline{\mathcal{M}}_F$ :
 \begin{align*}
      p_{F*}[\mathbb{P}\Xi\overline{\mathcal{M}}_F]^{\spin}=\pi_{\top}^*[\overline{\mathcal{H}}(2,-2,-2)]^{\spin}\cdot \pi_{\bot}^*[\overline{\mathcal{H}}_{1,2}(4,-4)]^{\spin}.
 \end{align*}
    
As $\mu=(2,-2,-2)$ is a signature of a stratum of differentials on genus $0$ curves. By default,
 \begin{align*}
    [\overline{\mathcal{H}}(2,-2,-2)]^{\spin}=&[\overline{\mathcal{H}}(2,-2,-2)]\\
    =&[\overline{\mathcal{M}}_{0,3}]
\end{align*}

On the other hand, one can show that
\begin{align*}
    [\overline{\mathcal{H}}_{1,2}(4,-4)]^{\spin}=9\psi_1
\end{align*}
Hence, 
\[p_{F*}[\mathbb{P}\Xi\overline{\mathcal{M}}_A]^{\spin}=9\pi_\bot^*\psi_1\]

Now we can do the computations of the clutching pullbacks:
\begin{itemize}
    \item Only the enhanced level graphs (F) and (J) have $\Gamma_1^{\{1\}}$-structures. Thus, by the excess intersection formula, we conclude that \begin{align*}
   \xi_{\Gamma_1^{\{1\}}}^*[\overline{\mathcal{H}}_1(4,-2,-2)]^{\spin}=9\pi_\bot^*\psi_1. 
    \end{align*}
    Only the enhanced level graph (G) has $\Gamma_1^{\{2\}}$-structure. One can compute that $[\overline{\mathcal{H}}_1(2,-2)]^{\spin}= 3\psi_1 $. Thus, we have: 
    \[\xi_{\Gamma_1^{\{2\}}}^*[\overline{\mathcal{H}}_1(4,-2,-2)]^{\spin}=9\pi_\top^*\psi_1\] 
    \item Only the enhanced level graph (H) has $\Gamma_1^{\{3\}}$-structure, thus we have:
    \[\xi_{\Gamma_1^{\{3\}}}^*[\overline{\mathcal{H}}_1(4,-2,-2)]^{\spin}=3\pi_\top^*\psi_1\] 
    \item Only the enhanced level graph (I) has $\Gamma_1^\emptyset$-structure, hence:
    \[\xi_{\Gamma_1^\emptyset}^*[\overline{\mathcal{H}}_1(4,-2,-2)]^{\spin}=-\pi_\bot^*\psi_1\] 
\end{itemize}

By using the function \texttt{totensorTautbasis()} in \texttt{admcycles}, we can convert the product tautological classes above into vectors. Now we can make use of the basis of $RH^2(\overline{\mathcal{M}}_{1,3})$ given by \texttt{admcycles}:
\begin{align*}
    \RH^2(\overline{\mathcal{M}}_{1,3})=\langle \kappa_1,\psi_1,\psi_2,\psi_3, \delta^{\{1\}}_1 \rangle
\end{align*} 
By solving the collection of linear equations, we get 
\begin{align*}
   [\overline{\mathcal{H}}_1(4,-2,-2)]^{\spin}=&(0,1,3,3,-8)&= \psi_1+3(\psi_2+\psi_3)- 8\delta^{\{1\}}_1 
\end{align*}

\end{exa}

In the next example, we will illustrate how we resolve a residue condition.

\begin{exa}
Now we consider the spin stratum classes $[\overline{\mathcal{H}}_{1}^\mathfrak{R}(4,-2,-2)]^{\spin}$, where $\mathfrak{R}$ represents the residue condition $r_3=0$. We will first apply Proposition~\ref{prop:res_resl} to resolve the residue condition. The level graphs in Section~\ref{sec:exa_clutch} such that the residue condition $r_3=0$ induces no extra condition on the top level are the following:
\begin{center}
    \begin{tikzpicture}
      \begin{scope}[xshift=0cm,yshift=0cm]
        \node (a) at (0,0) [circle,draw,inner sep=0pt,fill,minimum size=3pt]{};
        \node (b) at (0,-2) [circle,draw,fill,inner sep=0pt,minimum size=3pt]{};
        \draw (a).. controls (-0.6,-0.75) and (-0.6,-1.5).. (b);
         \draw (a).. controls (0.6,-0.75) and (0.6,-1.5).. (b);
        \path [-] (b) edge (-0.3,-2.3);
        \path [-] (b) edge (0.3,-2.3);
        \path[-] (a) edge (0,0.3);
        \node at (-1.3,-2.4){(A)};  \node at (-0.3,-2.5){$4$};
        \node at (0.3,-2.5){$-2_1$}; \node at (0,0.5){$-2_2$};
        \node at (-0.6,-0.5){$0$};
        \node at (0.6,-0.5){$0$};
        \node at (-0.6,-1.8){$-2$};
        \node at (0.6,-1.8){$-2$};
    \end{scope}
    \begin{scope}[xshift=4cm,yshift=0cm]
        \node (a) at (0,0) [circle,draw,inner sep=0pt,fill,minimum size=3pt]{};
        \node (b) at (0,-2) [circle,draw,fill,inner sep=0pt,minimum size=3pt]{};
        \draw (a).. controls (-0.6,-0.75) and (-0.6,-1.5).. (b);
         \draw (a).. controls (0.6,-0.75) and (0.6,-1.5).. (b);
        \path [-] (b) edge (-0.3,-2.3);
        \path [-] (b) edge (0.3,-2.3);
        \path[-] (a) edge (0,0.3);
        \node at (-1.3,-2.4){(B)};  \node at (-0.3,-2.5){$4$};
        \node at (0.3,-2.5){$-2_2$}; \node at (0,0.5){$-2_1$};
        \node at (-0.6,-0.5){$0$};
        \node at (0.6,-0.5){$0$};
        \node at (-0.6,-1.8){$-2$};
        \node at (0.6,-1.8){$-2$};
    \end{scope}
    
    \begin{scope}[xshift=8cm]
    \node (a) at (0,0) [circle,draw,inner sep=0pt,minimum size=5pt]{$1$};
    \node (b) at (0,-2) [circle,draw,fill,inner sep=0pt,minimum size=3pt]{};
    \path [-] (a) edge (b);
    \node at (0.2,-0.3){$2$} ;
    \node at (-0.6,-2.6){$4$} ;
    \node at (0.6,-2.6){$-2_2$} ;
    \node at (0.4,-1.8){$-4$} ;
    \path [-] (b) edge (-0.5,-2.5);
    \path [-] (b) edge (0.5,-2.5);
    \path [-] (a) edge (0,0.5);
    \node at (0,0.5){$-2_1$};
    \node at (-1.6,-2.4){(G)};   
    \end{scope}  
    
    \begin{scope}[xshift=0cm, yshift=-5cm ]
        \node (a) at (0,0) [circle,draw,inner sep=0pt,minimum size=5pt]{$1$};
    \node (b) at (0,-2) [circle,draw,fill,inner sep=0pt,minimum size=3pt]{};
    
    \path [-] (a) edge (b);
   \node at (0.2,-0.3){$2$} ;
    \node at (-0.6,-2.6){$4$} ;
    \node at (0.6,-2.6){$-2_1$} ;
    \node at (0.4,-1.8){$-4$} ;
    \path [-] (b) edge (-0.5,-2.5);
    \path [-] (b) edge (0.5,-2.5);
    \path [-] (a) edge (0,0.5);
    \node at (0,0.5){$-2_2$};
    \node at (-1.6,-2.4){(H)}; 
    \end{scope}
    \begin{scope}[xshift=4cm,yshift=-5cm]
       \node (a) at (0,0) [circle,draw,inner sep=0pt,minimum size=5pt]{$1$};
    \node (b) at (0,-2) [circle,draw,fill,inner sep=0pt,minimum size=3pt]{};
    
    \path [-] (a) edge (b);
   \node at (0.2,-0.3){$0$} ;
    \node at (-0.6,-2.6){$4$} ;
    \node at (0.6,-2.6){$-2_2$} ;
    \node at (0.4,-1.8){$-2$} ;
    \path [-] (b) edge (-0.5,-2.5);
    \path [-] (b) edge (0.5,-2.5);
    \path [-] (b) edge (0,-2.5);
    \node at (0,-2.6){$-2_1$};
    \node at (-1.6,-2.4){(I)}; 
    \end{scope}

    \end{tikzpicture}
\end{center}
Notice that only the graph (I) has odd spin (due to the top level). The other graphs are all of even spin.  Thus, we have

\begin{align*}
   [\mathbb{P}\Xi\overline{\mathcal{M}}_{1,3}^\mathfrak{R}(4,-2,-2)]^{\spin}=-\xi^{\spin}  -[D_A]-[D_B]-3[D_G]-3[D_H] +[D_I]. 
\end{align*}

By Proposition~\ref{prop:xi}, if we take the reference leg to be the third marked point, then we have
\begin{align*}
    \xi^{\spin}&=(-2+1)\psi_3^{\spin} -[D_B]- 3[D_G] + [D_I]
\end{align*}
Then, 
\begin{align*}
  [\mathbb{P}\Xi\overline{\mathcal{M}}_{1,3}^\mathfrak{R}(4,-2,-2)]^{\spin}& = \psi_3^{\spin}-[D_A]-3[D_H]
\end{align*}
By the result in Example~\ref{exa:cal_spin}, we then have 

\begin{align*}
    [\overline{\mathcal{H}}^\mathfrak{R}_{1}(4,-2,-2)]^{\spin}=&p_*[\mathbb{P}\Xi\overline{\mathcal{M}}_{1,3}^\mathfrak{R}(4,-2,-2)]^{\spin}\\=&p_*(\psi_3^{\spin}-[D_A]-3[D_H])\\=&\psi_3\cdot [\overline{\mathcal{H}}_{1}(4,-2,-2)]^{\spin}-p_*[D_A]-3p_*[D_H]\\=& \bigg(\psi_3(\psi_1+3\psi_2+3\psi_3)-8\psi_3\delta^{\{1\}}_1\bigg) - \Bigg[\begin{tikzpicture}[->,baseline=-3pt,node distance=1.3cm,thick,main node/.style={circle,draw,font=\Large,scale=0.5}]
\node at (0,0) (C) {};
\node [scale=.3,draw,circle,fill] [above of =C] (A) {};
\node [scale=.3,draw,circle,fill] [below of =C] (B) {};
\node at (0.5,-0.7) (n2) {2};
\node at (-0.5,-0.7) (n1) {1};
\node at (0,1.0) (n3) {3};
\draw [-] (A) to [out=-120, in=120] (B);
\draw [-] (A) to [out=-60, in=60] (B);
\draw [-] (B) to (n2);
\draw [-] (B) to (n1);
\draw [-] (A) to (n3);
\end{tikzpicture}\Bigg]
- 3(3\psi_3)\cdot\Bigg[\begin{tikzpicture}[->,baseline=-3pt,node distance=1.3cm,thick,main node/.style={circle,draw,font=\Large,scale=0.5}]
\node at (0,0) (C) {};
\node [scale=.3,draw,circle] [above of =C] (A) {1};
\node [scale=.3,draw,circle,fill] [below of =C] (B) {};
\node at (0.5,-0.7) (n2) {2};
\node at (-0.5,-0.7) (n1) {1};
\node at (0,1.0) (n3) {3};
\draw [-] (A) to (B);
\draw [-] (B) to (n2);
\draw [-] (B) to (n1);
\draw [-] (A) to (n3);
\end{tikzpicture}\Bigg].
\end{align*} 

\end{exa}

\section{Computation Results of $[\overline{\mathcal{H}}_g(\mu)]^{\spin}\in \RH^*(\overline{\mathcal{M}}_{g,n})$}\label{appendix:results}

In the following, we will list some of our results of the spin classes for $(g,n)$ equal to
\begin{align*}
 (2,1),(2,2), (2,3), (2,4), (3,1), (3,2), (3,3), (4,1), (4,2), (4,3)   
\end{align*}
This also implies that Assumption~\ref{prop:assum} holds during the computation for the spin stratum classes of strata compatible to the values of $(g,n)$ above. Our results will be written in vector form, whereas the basis is computed by \texttt{admcycles}. For $(g,n)= (3,3), (4,2)$, the expression of spin stratum classes of meromorphic strata will be super lengthy, so we will not list them here. 

For $(g,n)= (2,1)$, by our recursion, we get
\begin{lstlisting}
Spin_Class(2,) = (1/2, -7/2, 1/2)
\end{lstlisting}

For $(g,n)= (2,2)$,

\begin{lstlisting}
Spin_Class(-6, 8) = (-1563/2, 521/2, -637, -602, 1274, 1021, 1204, 1563/2, -3647/2, 521/2, -116, -81, -521/2, 0)

Spin_Class(-4, 6) = (-423/2, 141/2, -174, -159, 348, 273, 318, 423/2, -987/2, 141/2, -33, -18, -141/2, 0)

Spin_Class(-2, 4) = (-63/2, 21/2, -27, -22, 54, 41, 44, 63/2, -147/2, 21/2, -6, -1, -21/2, 0)
\end{lstlisting}

The spin stratum classes we listed above are expressed in the basis of $\RH^4(\overline{\mathcal{M}}_{2,2},\mathbb{Q})$, which consists of:
\begin{align*}
    \kappa_2, \kappa_1^2, \kappa_1\psi_1,\kappa_1\psi_2,\psi_1\psi_2, \psi_1^2,\psi_2^2,...
\end{align*}
\begin{rem}
The first entry of the basis of $\RH^{2j}(\overline{\mathcal{M}}_{g,n},\mathbb{Q})$ constructed by \texttt{admcycles} is by default just $\kappa_j$.
\end{rem}
From now on, the vector of the spin stratum class will be quite lengthy. Thus we will only give the first few entries. For $(g,n)= (2,3)$,

\begin{lstlisting}
Spin_Class(-1, -1, 4) = (0, 0, 4, 0, ...)

Spin_Class(-6, -6, 14) = (-10623/2, 3541/2, -4067, -5329, ...)

Spin_Class(-6, -2, 10) = (-3039/2, 1013/2, -995, -1517, ...)

Spin_Class(-6, 4, 4) = (-1023/2, 341/2, -167, -514, ...)

Spin_Class(-4, 2, 4) = (-162, 54, -58, -162, ...)
\end{lstlisting}

For $(g,n)=(2,4)$,

\begin{lstlisting}
Spin_Class(-6, -6, -2, 16) = (-6208, 3129, -7270, -8261, ...)

Spin_Class(-6, -4, 2, 10) = (-5777/2, 1053/2, -2027/2, -2441/2, ...)
\end{lstlisting}

For $(g,n)=(3,1)$,

\begin{lstlisting}
Spin_Class(4,) = (-729, 1099/12, -787/12, 213, ...)
\end{lstlisting}

For $(g,n)=(3,2)$,

\begin{lstlisting}
Spin_Class(-4, 8) = (-34167333/140, 14160639/280, -976627/280, -358429/40, ...)

Spin_Class(2, 2) = (159/4, -179/48, -7/24, -7/24, ...)
\end{lstlisting}

For $(g,n)=(4,1)$,

\begin{lstlisting}
Spin_Class(6,) = (-1125203/120, 85013/48, -325/3, -65089/24, ...)

\end{lstlisting}

For $(g,n)=(4,2)$,

\begin{lstlisting}
Spin_Class(2, 4) = (59623999/840, -156407/16, 302305/672, -229619/112, ...)
\end{lstlisting}

for $(g,n)=(4,3)$,
\begin{lstlisting}
Spin_Class(2,2,2) =  (-1572061/48, 42127/9, -64043/288, -116291/72, ...)
\end{lstlisting}

One can input our result in \texttt{admcycles} by the method \texttt{Tautvb\_to\_tautclass}. The following is the example for $\mu=(2,2)$,
\begin{lstlisting}
sage: from admcycles import *
sage: Spin_Class_2_2_vb = (159/4, -179/48, -7/24, -7/24, -131/48, -23/24, -131/48, 149/24, -83/16, 271/48, 77/48, 77/48, -193/8, -185/48, 127/48, 395/48, 221/48, -73/8, -185/48, 127/48, 395/48, 221/48, -73/8, -185/48, -41/48, 389/48, 389/48, -23/8, 51/8, -139/16, -323/16, 1/8, -25/4, -11/4, -11/4, -23/4, 973/48, 37/96, -5/2, 23/96, 23/96, -25/32)
sage: Spin_Class_2_2 = Tautvb_to_tautclass(Spin_Class_2_2_vb, g=3, n=2, r=2)
\end{lstlisting}
Here $r$ is the codimension of our class.

\printbibliography
\end{document}